\tiny\color{gray},
\tikzset{
every node/.style={circle, draw, inner sep=2pt},
every picture/.style={thick}
}
\newtheorem{theorem}{Theorem}
\newtheorem{lemma}[theorem]{Lemma}
\newtheorem{proposition}[theorem]{Proposition}
\newtheorem{corollary}[theorem]{Corollary}
\theoremstyle{definition}
\newtheorem{definition}[theorem]{Definition}
\newtheorem{observation}[theorem]{Observation}
\newtheorem{remark}[theorem]{Remark}
\newtheorem{example}[theorem]{Example}
\newtheorem{problem}{Problem}
\newtheorem{question}[theorem]{Question}
\newenvironment{obs}{\begin{observation}\bgroup\rm }{\egroup\end{observation}}
\newcommand{\F}{\mathcal{F}}
\newcommand{\diag}{\operatorname{diag}}
\newcommand{\minors}{\operatorname{minors}}
\newcommand{\Pf}{P_4}
\newcommand{\Gaa}{\sf fork}
\newcommand{\Gab}{\sf 4\text{-}pan}
\newcommand{\Gac}{\sf bull}
\newcommand{\Gad}{\sf dart}
\newcommand{\Gae}{\sf P_5}
\newcommand{\Gaf}{\sf co\text{-}4\text{-}pan}
\newcommand{\Gag}{\sf 3\text{-}fan}
\newcommand{\Gah}{\sf kite}
\newcommand{\Gai}{\sf S_6 +e}
\newcommand{\Gaj}{\sf \overline{diamond+K_2}}
\newcommand{\Gak}{\sf K_{3,3}+e}
\newcommand{\Gal}{\sf \overline{P_3 + \overline{P_3}}}
\newcommand{\Gam}{\sf K_{1,1,1,2,2}}
\newcommand{\Gan}{\sf K_{1,1,1,1,4}}
\begin{document}


\title{Graphs with few trivial characteristic ideals}


\author{Carlos A. Alfaro$^a$, Michael D. Barrus$^b$, John Sinkovic$^c$ and\\
Ralihe R. Villagr\'an$^d$
\\ \\
{\small $^a$ Banco de M\'exico} \\
{\small Mexico City, Mexico}\\
{\small {\tt carlos.alfaro@banxico.org.mx}} \\
{\small $^b$Department of Mathematics} \\
{\small University of Rhode Island}\\
{\small Kingston, RI 02881, USA}\\
{\small {\tt barrus@uri.edu}}\\
{\small $^c$Department of Mathematics} \\
{\small Brigham Young University - Idaho}\\
{\small Rexburg, ID 83460, USA}\\
{\small {\tt sinkovicj@byui.edu}}\\
{\small $^d$Departamento de Matem\'aticas}\\ {\small Centro de Investigaci\'on y de Estudios Avanzados del IPN}\\
{\small Apartado Postal 14-740, 07000 Mexico City, Mexico} \\
{\small {\tt
rvillagran@math.cinvestav.mx}}\\
}
\date{}


\maketitle

\begin{abstract}
    We give a characterization of the graphs with at most three trivial characteristic ideals.
    This implies the complete characterization of the regular graphs whose critical groups have at most three invariant factors equal to 1 and the characterization of the graphs whose Smith groups have at most 3 invariant factors equal to 1.
    We also give an alternative and simpler way to obtain the characterization of the graphs whose Smith groups have at most 3 invariant factors equal to 1, and a list of minimal forbidden graphs for the family of graphs with Smith group having at most 4 invariant factors equal to 1.
\end{abstract}


\section{Introduction}

By considering an $m\times n$ matrix $M$ with integer entries as a linear map $M:\mathbb{Z}^n\rightarrow \mathbb{Z}^m$, the {\it cokernel} of $M$
is the quotient module $\mathbb{Z}^{m}/{\rm Im}\, M$.
This finitely generated Abelian group becomes a graph invariant when we take the matrix $M$ to be a matrix associated with the graph, say, the adjacency or Laplacian matrix.
The cokernel of the adjacency matrix $A(G)$ is known as the {\it Smith group} of $G$ and is denoted $S(G)$, and the torsion part of the cokernel of the Laplacian matrix $L(G)$ is known as the {\it critical group} $K(G)$ of $G$.

Smith groups were introduced in \cite{rushanan}.
Recently, the computation of the Smith group for several families of graphs has attracted attention, see \cite{BK,CSX,DGW,DJ,W14}.
The critical group is especially interesting for connected graphs, since its order is equal to the number of spanning trees of the graph.
The critical group has been studied intensively over the last 30 years on several contexts: the {\it group of components} \cite{lorenzini1991,lorenzini2008}, the {\it Picard group} \cite{bhn,biggs1999}, the {\it Jacobian group} \cite{bhn,biggs1999}, the {\it sandpile group} \cite{alfaval0,cori},  {\it chip-firing game} \cite{biggs1999,merino}, or {\it Laplacian unimodular equivalence} \cite{gmw,merris}.
The book of Klivans \cite{Klivans} is an excellent reference on the theory of sandpiles and its connections to other combinatorial objects like hyperplane arrangements, parking functions, dominoes, etc.

The computation of the Smith normal form (SNF) of a matrix is a standard technique to determine its cokernel.
We might refer the reader to the Stanley's survey  \cite{stanley} on SNFs in combinatorics for more details in the topic.

One way to compute the SNF of a matrix $M$ is by means of elementary row and column operations over the integers.
Let $M$ and $N$ be two $n\times n$ matrices with integer entries.
We say that $M$ and $N$ are {\it equivalent}, denoted by $N\sim M$, if there exist $P,Q\in GL_n(\mathbb{Z})$ such that $N=PMQ$.
That is, $M$ can be transformed to $N$ by applying elementary row and column operations which are invertible over the ring of integers:
\begin{enumerate}
  \item Swapping any two rows or any two columns.
  \item Adding integer multiples of one row/column to another row/column.
  \item Multiplying any row/column by $\pm 1$.
\end{enumerate}
Moreover, if $N\sim M$, then $coker(M)=\mathbb{Z}^n/{\rm Im} M\cong\mathbb{Z}^n/{\rm Im} N=coker(N)$.
Therefore, as the fundamental theorem of finitely generated Abelian groups states, the cokernel of $M$ can be described as:
$coker(M)\cong \mathbb{Z}_{d_1} \oplus \mathbb{Z}_{d_2} \oplus \cdots \oplus\mathbb{Z}_{d_{r}} \oplus \mathbb{Z}^{m-r}$,
where $d_1, d_2, \dots, d_{r}$ are positive integers with $d_i \mid d_j$ for all $i\leq j$.
These integers are called {\it invariant factors} of $M$.
Let $\phi(M)$ denote the number of invariant factors of $M$ equal to 1.

The computation of the invariant factors of the Laplacian matrix is an important technique used in the understanding of the critical group.
For instance, several researchers have addressed the question of how often the critical group is cyclic, that is, how often $\phi(L(G))$ is equal to $n-2$ or $n-1$?
In \cite{lorenzini2008} and \cite{wagner} D. Lorenzini and D. Wagner, based on numerical data, suggest we could expect to find a substantial proportion of graphs having a cyclic critical group.
Based on this, D. Wagner conjectured \cite{wagner} that almost every connected simple graph has a cyclic critical group.
A recent study \cite{wood} concluded that the probability that the critical group of a random graph is cyclic is asymptotically at most
\[
	\zeta(3)^{-1}\zeta(5)^{-1}\zeta(7)^{-1}\zeta(9)^{-1}\zeta(11)^{-1}\cdots \approx 0.7935212,
\]
where $\zeta$ is the Riemann zeta function; differing from Wagner's conjecture. 
Besides, it is interesting \cite{ck} that for any given connected simple graph, there is an homeomorphic graph with cyclic critical group.
The reader interested on this topic may consult \cite{cklpw,lorenzini2008,wood} for more questions and results.

On the other hand, the characterization of the family $\mathcal{K}_k$ of simple connected graphs having critical group with $k$ invariant factors equal to $1$ has been of great interest.
Probably, it was initially posed by R. Cori\footnote{Personal communication with C. Merino}.
However, the first result appeared  when D. Lorenzini noticed in \cite{lorenzini1991}, and independently A. Vince in \cite{vince}, that the graphs in $\mathcal K_1$ 
consist only of complete graphs.
After, C. Merino in \cite{merino} posed interest on the characterization of $\mathcal K_2$ and $\mathcal K_3$.
  In this sense, some advances have been done.
  For instance, in \cite{pan} it was characterized the graphs in $\mathcal K_2$ whose third invariant factor is equal to $n$, $n-1$, $n-2$, or $n-3$.
  In \cite{chan} the characterizations of the graphs in $\mathcal K_2$ with a cut vertex and number of independent cycles equal to $n-2$ are given.
  
  Later, a complete characterization of $\mathcal K_2$ was obtained in \cite{alfaval}.
  On the other hand, the characterization of the graphs in $\mathcal K_3$ seems to be a hard open problem \cite{alfaval1}.
  For digraphs case, the characterization of digraphs with at most 1 invariant factor equal to 1 was completely obtained in \cite{AVV}.
  These characterizations were obtained by using the {\it critical ideals} of a graph $G$, that are determinantal ideals, defined in \cite{corrval}, of the matrix $\diag(x_1,\dots,x_n)-A(G)$, where $x_1,\dots, x_n$ are indeterminates. 
  These ideals turned out \cite{alflin} to be related with other parameters like the {\it minimum rank} and the {\it zero-forcing number}.
  Similar ideals for the distance and distance Laplacian matrices were introduced in \cite{alfaro2} with the name of {\it distance ideals}.
 Therefore, for example, the family of graphs with 2 trivial distance ideals contains the family of graphs whose distance matrix has at most 2 invariant factors equal to 1.
It is interesting that there is an infinite number of minimal forbidden graphs for the graphs with two trivial distance ideals, see \cite{alfaro1}.

 In the context of the Smith groups of graphs, it would be also interesting to characterize graphs having Smith group with at most $k$ invariant factors equal to 1.
For this we introduce further notation, let $\mathcal{S}_{\leq k}$ denote the family of simple connected graphs whose adjacency matrix has at most $k$ invariant factors equal to 1, that is, $\phi(A(G))\leq k$.
The characterization of the $\mathcal S_{\leq 1}$ and $\mathcal S_{\leq 2}$ can be derived from \cite{alfaval}, and the characterization of the digraphs with $\phi(A(G))\leq 1$ was obtained in \cite{AVV}.
However, nothing is known on the structure of $\mathcal{S}_{\leq k}$, for $k\geq3$.

The manuscript is organized as follows.
In Section~\ref{section:charideals}, we introduce the concept of characteristic ideals which are determinantal ideals defined in \cite{corrval} as a generalization of the critical group and the characteristic polynomial.
Also, we give the characterization of the graphs with one and two trivial characteristic ideals, and by product the characterization of the regular graphs in $\mathcal{K}_{\leq 1}$ and $\mathcal{K}_{\leq 2}$.
We give, in Section~\ref{sec:charreg}, the characterization of graphs with 3 trivial characteristic ideals, consequently, this is used to give a complete characterization of regular graphs in $\mathcal K_{\leq3}$.
The characterization of $\mathcal{S}_{\leq 1}$, $\mathcal{S}_{\leq 2}$, and $\mathcal{S}_{\leq 3}$ can be derived from the obtained results, however, in Section~\ref{sec:smithgroup}, we give an alternative and simpler way to characterize these graph families.
We also give a list of 43 forbidden graphs for $\mathcal{S}_{\leq 4}$.

\section{Characteristic ideals of graphs}\label{section:charideals}

Consider a $n\times n$ matrix $M$ whose entries are in the polynomial ring $\mathbb{Z}[X]$ with $X=\{x_1, \dots, x_m\}$.
For $k\in [n]:=\{1, \dots, n\}$, let $\mathcal{I}=\{r_j\}_{j=1}^k$ and $\mathcal{J}=\{c_j\}_{j=1}^k$ be two sequences such that
$1\leq r_1 < r_2 < \cdots < r_k \leq n$  and $1\leq c_1 < c_2 < \cdots < c_k \leq n$.
Let $M[\mathcal{I;J}]$ denote the submatrix of a matrix $M$ induced by the rows with indices in $\mathcal{I}$ and columns with indices in $\mathcal{J}$.
Recall the determinant of $M[\mathcal{I;J}]$ is called $k$-{\it minor} of $M$.
The set of all $k$-minors of $M$ is denoted by $\minors_k(M)$.
The $k$-{\it th} {\it determinantal ideal} $I_k(M)$ of a matrix $M$ is the ideal generated by all $k$-minors of $M$.
Some properties of determinantal ideals of graphs can be found in \cite{akm}.
For example, determinantal ideals of $M$ satisfy
\begin{equation}\label{eqn:idealschain}
\langle 1\rangle \supseteq I_1(M) \supseteq \cdots \supseteq I_n(M) \supseteq \langle 0\rangle.
\end{equation}


An ideal is said to be {\it trivial}  or {\it unit} if it is equal to $\langle1\rangle$, that is, the ideal is equal to $\mathbb{Z}[X]$.

\begin{definition}
The $k$-{th} characteristic ideal $A_k(G,t)$ of a graph $G$ is the $k$-th determinantal ideal of the matrix $tI_n-A(G)$, that is, the ideal $\langle {\rm minors}_k(tI_n-A(G))\rangle\subseteq \mathbb{Z}[t]$.
The algebraic co-rank $\gamma_A(G)$ of a graph $G$ is the maximum integer $k$ such that $A_k(G,t)$ is trivial.
\end{definition}

\begin{figure}[h!]
\begin{center}
    \begin{tikzpicture}[scale=1,thick]
    \tikzstyle{every node}=[minimum width=0pt, inner sep=2pt, circle]
        \draw (-0.8,0) node[draw] (0) {};
        \draw (0.8,0) node[draw] (1) {};
        \draw (0,1) node[draw] (2) {};
        \draw (0,-1) node[draw] (3) {};
        \draw  (0) -- (1) -- (2) -- (0) -- (3) -- (1);
    \end{tikzpicture}
\end{center}
\caption{{\sf diamond} graph}
\label{figure:diamond}
\end{figure}
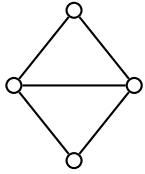

\begin{example}
Let $G$ be the {\sf diamond} graph with vertex set $V=\{v_1,v_2,v_3,v_4\}$ such that each pair of vertices are adjacent, except for $v_1$ and $v_3$, see Figure~\ref{figure:diamond}.
Then
\[
tI_4-A(G)=
\begin{bmatrix}
 t & -1 &  0 & -1\\
-1 &  t & -1 & -1\\
 0 & -1 &  t & -1\\
-1 & -1 & -1 &  t
\end{bmatrix}
\]
Since $\pm1$ is in $\minors_1(tI_4-A(G))$ and in $\minors_2(tI_4-A(G))$, then $A_1(G,t)$ and $A_2(G,t)$ are trivial.
The different 3-minors of $tI_4-A(G)$ are:
\[
t^3 - 2t, -t^2 - 2t, t^2 + t, t^3 - 3t - 2, -2t - 2.
\]
Note that $t= -(-t^2-2t) - (t^2+t)$, then $t\in A_3(G,t)$, and similarly $2\in A_2(G,t)$.
Since all the 3-minors are a linear combination of $t$ and $2$, then $A_3(G,t)=\langle 2,t\rangle$.
It is interesting to note that if $A_3(G,t)$ would be defined on $\mathbb{R}[t]$ instead, then $A_3(G,t)$ would be trivial.
Finally, $A_4(G,t) = \langle \det( tI_4-A(G) ) \rangle = \langle t^4 - 5t^2 - 4t\rangle$.
\end{example}

Computing Gr\"obner basis of a characteristic ideal are an useful computational tool to find a minimal generating set.
They can be computed in SAGE with the following code.

\begin{lstlisting}
# G is a graph
def CharIdeals(G):
    n = G.order()
    R = macaulay2.ring("ZZ",'[t,x]').to_sage()
    R.inject_variables(verbose=False);
    L = diagonal_matrix([t for i in xrange(n)]) - G.adjacency_matrix()
    Gamma = 0
    for i in range(n+1):
        M = L.minors(i)
        I = R.ideal(M).groebner_basis()
        print("Grobner basis of char ideal of size " + str(i))
        print(str(I))
        if I[0] == 1:
            Gamma = i
    print("gamma_A = " + str(Gamma))
\end{lstlisting}

\begin{example}
Thus, the Gr\"obner basis of the characteristic ideals and the algebraic co-rank of the {\sf diamond} graph can be computed with the following SAGE code:
\begin{lstlisting}
CharIdeals(Graph("C^"))
\end{lstlisting}

\end{example}

The connection of the characteristic ideals with the cokernel of the adjacency and Laplacian matrices is that the invariant factors can be recovered by evaluating the characteristic ideals.
This rely on the Theorem of elementary divisors, whose for details can be foun in \cite[Theorem 3.9]{jacobson}.

\begin{theorem}[Theorem of elementary divisors]\label{theo:elemdivisors}
Let $M$ a integer matrix of rank $r$ with $d_1,\dots,d_r$ its invariant factors.
For $k\geq1$, let $\Delta_k$ be the $\gcd$ of the $k$-minors of $M$, and $\Delta_0=1$.
Then
\[ d_k=\frac{\Delta_k}{\Delta_{k-1}}.
\]
\end{theorem}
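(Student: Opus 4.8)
The plan is to exploit the fact that the quantity $\Delta_k$ is invariant under the equivalence $\sim$, and then to read off its value from a diagonal representative. First I would invoke the existence of the Smith normal form: every integer matrix $M$ satisfies $PMQ=D$ for some $P,Q\in GL_n(\mathbb{Z})$, where $D=\diag(d_1,\dots,d_r,0,\dots,0)$ with $d_1\mid d_2\mid\cdots\mid d_r$ the invariant factors of $M$. This is precisely what the elementary operations listed in the introduction, together with the fundamental theorem of finitely generated Abelian groups, guarantee.

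Next I would show that $\Delta_k$ is an invariant of the equivalence class, i.e.\ that $N\sim M$ implies $\Delta_k(N)=\Delta_k(M)$ (as nonnegative integers; the gcd is well defined up to sign). The tool is the Cauchy--Binet formula: writing $N=PMQ$, each $k$-minor of $N$ expands as an integer linear combination of the $k$-minors of $M$, so $\Delta_k(M)\mid\Delta_k(N)$. Since $M=P^{-1}NQ^{-1}$ with $P^{-1},Q^{-1}\in GL_n(\mathbb{Z})$, the reverse divisibility holds as well, giving equality. Equivalently, one may phrase this as the statement that the determinantal ideal $I_k$ of \eqref{eqn:idealschain} is preserved by $\sim$, and that $\Delta_k$ is its positive generator in the principal ideal domain $\mathbb{Z}$.

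It then remains to compute $\Delta_k(D)$ directly. A $k\times k$ submatrix of the diagonal matrix $D$ indexed by rows $\mathcal{I}$ and columns $\mathcal{J}$ has determinant $0$ unless $\mathcal{I}=\mathcal{J}$, in which case its determinant is the product $\prod_{i\in\mathcal{I}}d_i$ of the chosen diagonal entries. Hence the nonzero $k$-minors of $D$ are exactly the products $d_{i_1}\cdots d_{i_k}$ with $i_1<\cdots<i_k\leq r$, and the divisibility chain $d_1\mid\cdots\mid d_r$ forces $d_1 d_2\cdots d_k$ to divide each of them. Therefore $\Delta_k(D)=d_1\cdots d_k$, and combining the three steps yields $\Delta_k(M)=d_1\cdots d_k$ for $1\le k\le r$, so that $d_k=\Delta_k/\Delta_{k-1}$.

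The step I expect to require the most care is the invariance argument in the second paragraph: one must check the Cauchy--Binet bookkeeping that a minor of a matrix product is genuinely an integer combination of minors of the factors, and that invertibility over $\mathbb{Z}$ (rather than merely over $\mathbb{Q}$) is what upgrades the one-sided divisibility into an equality. By contrast, the gcd computation in the last step is immediate once the divisibility chain is invoked, and the existence of the Smith normal form may be taken as known.
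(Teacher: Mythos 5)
Your proof is correct, but it cannot be matched against a proof in the paper because the paper does not prove this theorem at all: it is quoted as a classical result, with the details deferred to \cite[Theorem 3.9]{jacobson}. What you have written is essentially the standard textbook argument behind that citation: (i) existence of the Smith normal form, (ii) invariance of the determinantal divisors $\Delta_k$ under unimodular equivalence via Cauchy--Binet (one-sided divisibility from $N=PMQ$, upgraded to equality because $P^{-1},Q^{-1}$ are again integer matrices), and (iii) the computation $\Delta_k(D)=d_1\cdots d_k$ for $D=\diag(d_1,\dots,d_r,0,\dots,0)$, where the divisibility chain $d_1\mid\cdots\mid d_r$ makes $d_1\cdots d_k$ divide every nonzero $k$-minor $d_{i_1}\cdots d_{i_k}$. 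Each step checks out, including the observation that a $k$-minor of a diagonal matrix vanishes unless the row and column index sets coincide. Three small points worth tightening: the theorem as stated allows $M$ to be rectangular, so one should take $P\in GL_m(\mathbb{Z})$ and $Q\in GL_n(\mathbb{Z})$ rather than both in $GL_n(\mathbb{Z})$ (the argument is unchanged); the Cauchy--Binet expansion of a minor of the triple product $PMQ$ is obtained by applying the two-factor formula twice, which you should say explicitly; and the final division is legitimate because for $1\leq k\leq r$ one has $\Delta_{k-1}=d_1\cdots d_{k-1}>0$, the invariant factors being positive. None of these affects the substance, and your proof would serve as a self-contained replacement for the paper's citation.
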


\begin{proposition}\label{prop:characterisitidealsinvariantfactors}\cite[Corollary 15]{akm}
For $k\in [n]$,
\[
A_k(G,0)=\left< \prod_{j=1}^{k} d_j(A(G)) \right> =\left< \Delta_k(A(G))\right>,
\]
and if $G$ is r-regular, then
\[
A_k(G,r)=\left< \prod_{j=1}^{k} d_j(L(G)) \right> =\left< \Delta_k(L(G))\right>,
\]
where $\Delta_k(M)$ is the {\it greatest common divisor} of the $k$-minors of matrix $M$, and if $d_1(M)\mid\cdots\mid d_{r}(M)$ are the invariant factors in the Smith normal form of $M$, then $d_{k}(M)=\frac{\Delta_k(M)}{\Delta_{k-1}(M)}$ with $\Delta_{0}(M)=1$.
\end{proposition}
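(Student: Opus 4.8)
The plan is to show that substituting a scalar for $t$ in the characteristic ideal produces exactly the determinantal ideal of the corresponding integer matrix, and then to rewrite its generator via the Theorem of elementary divisors. First I would make the notation $A_k(G,0)$ precise. Since $A_k(G,t)$ is the ideal of $\mathbb{Z}[t]$ generated by the $k$-minors of $tI_n-A(G)$, and the evaluation map $\mathrm{ev}_0\colon\mathbb{Z}[t]\to\mathbb{Z}$ sending $t\mapsto 0$ is a surjective ring homomorphism, the image $\mathrm{ev}_0\big(A_k(G,t)\big)$ is again an ideal and equals the ideal of $\mathbb{Z}$ generated by the evaluated generators. Because a $k$-minor is a polynomial expression in the matrix entries and evaluation is a ring homomorphism, the image of a generator is precisely the corresponding $k$-minor of the evaluated matrix $0\cdot I_n-A(G)=-A(G)$. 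Hence $A_k(G,0)$ is the $k$-th determinantal ideal $I_k(-A(G))$ taken over $\mathbb{Z}$.

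Next I would strip the sign: replacing each entry of a $k\times k$ matrix by its negative multiplies the determinant by $(-1)^k$, so the $k$-minors of $-A(G)$ and of $A(G)$ generate the same ideal, giving $A_k(G,0)=I_k(A(G))$. Since $\mathbb{Z}$ is a principal ideal domain, this ideal is principal and generated by the greatest common divisor of all $k$-minors, that is $I_k(A(G))=\langle \Delta_k(A(G))\rangle$. Finally, the Theorem of elementary divisors (Theorem~\ref{theo:elemdivisors}) gives $d_j=\Delta_j/\Delta_{j-1}$ with $\Delta_0=1$, so the product telescopes, $\prod_{j=1}^k d_j(A(G))=\Delta_k(A(G))$. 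Chaining these identities yields the first displayed equalities.

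For the regular case I would use the structural identity $L(G)=rI_n-A(G)$, which holds precisely because an $r$-regular graph has degree matrix $D=rI_n$. Evaluating the $k$-minors of $tI_n-A(G)$ at $t=r$ therefore returns exactly the $k$-minors of $L(G)$, and the same three steps — surjectivity of $\mathrm{ev}_r$, principality over $\mathbb{Z}$, and the telescoping of elementary divisors — give $A_k(G,r)=I_k(L(G))=\langle\Delta_k(L(G))\rangle=\langle\prod_{j=1}^k d_j(L(G))\rangle$.

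The argument is almost entirely formal, and the one point to handle with care is the interchange of ideal formation with evaluation, namely $\mathrm{ev}_c\big(\langle g_1,\dots,g_s\rangle\big)=\langle \mathrm{ev}_c(g_1),\dots,\mathrm{ev}_c(g_s)\rangle$. This rests on $\mathrm{ev}_c$ being surjective, which guarantees that the set-theoretic image of an ideal is again an ideal and coincides with the ideal generated by the images of any generating set. It is also the only place where a naive approach could fail: one cannot expect a gcd-type generator to survive inside the non-principal ring $\mathbb{Z}[t]$ before evaluation, so the key is to evaluate first and then exploit that $\mathbb{Z}$ is a PID.
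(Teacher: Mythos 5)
Your proof is correct. Note that the paper does not actually prove this proposition: it is imported verbatim from \cite[Corollary 15]{akm}, with the surrounding text only indicating that the connection ``relies on the Theorem of elementary divisors.'' Your argument fills in precisely that intended route, and the three ingredients you isolate are each sound: surjectivity of $\mathrm{ev}_c\colon\mathbb{Z}[t]\to\mathbb{Z}$ guarantees that the image of $A_k(G,t)$ is the ideal generated by the evaluated $k$-minors; the sign observation $\det(-M)=(-1)^k\det(M)$ on $k\times k$ submatrices lets you pass from $-A(G)$ to $A(G)$; and principality of $\mathbb{Z}$ converts the determinantal ideal into $\langle\Delta_k\rangle$, after which Theorem~\ref{theo:elemdivisors} telescopes the invariant factors. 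The identification $L(G)=rI_n-A(G)$ for $r$-regular $G$ is likewise exactly what makes the second display work. The only point you should add is the degenerate case $k>\operatorname{rank}(A(G))$, which genuinely occurs for the graphs in this paper (e.g.\ complete multipartite graphs have low-rank adjacency matrices): there the formula $d_j=\Delta_j/\Delta_{j-1}$ from Theorem~\ref{theo:elemdivisors} is unavailable because $\Delta_j=0$ beyond the rank, so the telescoping argument does not literally apply; instead one observes directly that all $k$-minors of $A(G)$ vanish and the Smith normal form has $d_j=0$ past the rank, so both sides of the claimed equality are $\langle 0\rangle$. This is a one-line repair, not a flaw in the approach.
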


\begin{example}
Continuing with the {\sf diamond} graph.
By evaluating, $t$ at 0 in each characteristic ideal, we obtain that its SNF of the adjacency matrix is $\diag(1,1,2,0)$, meanwhile, for this case the SNF of its Laplacian matrix cannot be obtained since the {\sf diamond} graph is not regular.
\end{example}

As consequence, if $A_k(G,t)$ is trivial, then the $k$-{\it th} invariant factor $d_k(A(G))=1$, and thus, $\gamma_A(G)\leq\phi(A(G))$.
For the Laplacian matrix, we have the same when $G$ is regular, that is, if $G$ is regular, then $\gamma_A(G)\leq\phi(L(G))$.
Then, the graphs in $\mathcal S_{\leq k}$ and the regular graphs in $\mathcal K_{\leq k}$ are contained in the family $\mathcal C_{\leq k}$ of graphs with at most $k$ trivial characteristic ideals.
By characterizing the graphs in $\mathcal C_{\leq k}$, we can use the containment to give a characterization of the regular graphs in $\mathcal K_{\leq k}$.
Analogous ideas can be used to characterize $\mathcal S_{\leq k}$, however simpler ideas can be applied to obtain them, we will explore them in Section~\ref{sec:smithgroup}. 

One advantage of characteristic ideals over critical group is that characteristic ideals are monotone on induced subgraphs.

\begin{lemma}\label{lemma:CharofInducedsubgraphs}
    If $H$ is an induced subgraph of $G$, then $A_k(H,t)\subseteq A_k(G,t)$.
\end{lemma}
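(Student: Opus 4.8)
The plan is to reduce everything to the simple observation that, because $H$ is an \emph{induced} subgraph, the matrix $tI-A(H)$ is literally a principal submatrix of $tI-A(G)$, and then to use the fact that the minors of a submatrix form a subset of the minors of the ambient matrix. First I would fix a labeling $v_1,\dots,v_n$ of $V(G)$ and let $S=\{i : v_i\in V(H)\}$ be the set of indices of the vertices retained in $H$, labeling $V(H)$ by the induced increasing order. Since $H$ is induced, for $i,j\in S$ the $(i,j)$ entry of $A(H)$ agrees with the $(i,j)$ entry of $A(G)$, and the diagonal entry $t$ is the same in $tI_{|V(H)|}-A(H)$ and in $tI_n-A(G)$; hence $tI_{|V(H)|}-A(H)=(tI_n-A(G))[S;S]$, the principal submatrix indexed by $S$.

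Next I would compare minors. A typical $k$-minor of $tI-A(H)$ is $\det\big((tI-A(H))[\mathcal{I};\mathcal{J}]\big)$ for increasing index sequences $\mathcal{I},\mathcal{J}$ of length $k$ drawn from the labels of $H$. Under the identification of those labels with the subset $S\subseteq[n]$, this determinant is exactly $\det\big((tI-A(G))[\mathcal{I}';\mathcal{J}']\big)$, where $\mathcal{I}',\mathcal{J}'\subseteq S$ are the corresponding increasing index sequences in $G$; that is, it is a $k$-minor of $tI-A(G)$. Therefore $\minors_k(tI-A(H))\subseteq \minors_k(tI-A(G))$ as subsets of $\mathbb{Z}[t]$.

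Finally, since $A_k(H,t)$ and $A_k(G,t)$ are by definition the ideals generated by $\minors_k(tI-A(H))$ and $\minors_k(tI-A(G))$ respectively, the containment of generating sets yields $A_k(H,t)=\langle \minors_k(tI-A(H))\rangle \subseteq \langle \minors_k(tI-A(G))\rangle = A_k(G,t)$, as desired. There is no deep obstacle here: the only point requiring care is the bookkeeping that matches the increasing index sequences of $H$ with subsequences of the labeling of $G$, so that each generator of $A_k(H,t)$ is recognized as an \emph{actual} generator of $A_k(G,t)$ rather than merely some polynomial that happens to lie in it. If one prefers to avoid the indexing altogether, one can instead observe that an induced subgraph is obtained by deleting vertices one at a time and prove the single-vertex-deletion case, then iterate by transitivity of containment; but the direct argument above handles all deleted vertices simultaneously.
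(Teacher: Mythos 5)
Your proof is correct and follows essentially the same route as the paper's own (much terser) argument: every $k$-minor of $tI-A(H)$ is a $k$-minor of $tI-A(G)$ because the former matrix is a principal submatrix of the latter, so the generating sets, and hence the ideals, are nested. Your version simply makes explicit the index bookkeeping that the paper leaves implicit.
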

\begin{proof}
It follows since any $k$-minor of $tI_n-A(H)$ is also a $k$-minors of $tI_n-A(G)$.
Therefore $\minors_k(tI_n-A(H))\subseteq \minors_k(tI_n-A(H))$.
\end{proof}

A similar result is not always true for the critical group, in fact, there are examples of graphs having different critical group, for instance, $K(K_4)\ntrianglelefteq K(K_5)$.
This is because, in general, it is not  true that if $H$ is an induced subgraph of $G$, then $L(H)$ is a submatrix of $L(G)$.

A graph $G$ is \textit{forbidden} for $\mathcal{C}_{\leq k}$ if $\gamma_A(G)\geq k+1$.
Thus, we can look for the minimal forbidden graphs to characterize the family $\mathcal C_{\leq k}$.

\begin{lemma}\label{lemma:pathisforb}
The path $P_k$ with $k$ vertices is forbidden for $\mathcal C_{\leq k-2}$.
\end{lemma}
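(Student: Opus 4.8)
The plan is to show directly that $A_{k-1}(P_k,t)$ is trivial, which by the definition of the algebraic co-rank gives $\gamma_A(P_k)\geq k-1$, and hence that $P_k$ is forbidden for $\mathcal{C}_{\leq k-2}$ (recall that being forbidden for $\mathcal{C}_{\leq k-2}$ means $\gamma_A(P_k)\geq (k-2)+1=k-1$). To prove that the ideal $A_{k-1}(P_k,t)$ equals $\langle 1\rangle$ it suffices to exhibit a single $(k-1)$-minor of $tI_k-A(P_k)$ equal to $\pm1$, since any unit among the generators forces the ideal to be all of $\mathbb{Z}[t]$.

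First I would label the vertices of $P_k$ along the path as $1,2,\dots,k$, so that $M:=tI_k-A(P_k)$ is the $k\times k$ tridiagonal matrix with $t$ on the diagonal, $-1$ on the two off-diagonals, and $0$ elsewhere. The key step is the choice of submatrix: delete the first row and the last column of $M$, keeping rows $\{2,\dots,k\}$ and columns $\{1,\dots,k-1\}$. Writing $N$ for the resulting $(k-1)\times(k-1)$ matrix, so that $N_{ab}=M_{a+1,b}$ for $a,b\in\{1,\dots,k-1\}$, I would observe that $N$ is triangular: the diagonal entry $N_{aa}=M_{a+1,a}=-1$ is a subdiagonal entry of $M$, while for $a>b$ we have $(a+1)-b\geq 2$, so $N_{ab}=M_{a+1,b}=0$ because $M$ is tridiagonal.

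Since $N$ is triangular with every diagonal entry equal to $-1$, its determinant is $(-1)^{k-1}=\pm1$. This $\pm1$ is one of the $(k-1)$-minors of $M$, so it generates the whole ring and $A_{k-1}(P_k,t)=\langle 1\rangle$. Consequently $\gamma_A(P_k)\geq k-1$, which is exactly the condition for $P_k$ to be forbidden for $\mathcal{C}_{\leq k-2}$.

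There is essentially no hard step here: the entire argument rests on spotting that deleting the first row and last column of $tI_k-A(P_k)$ leaves a triangular matrix whose diagonal consists of the $-1$'s coming from the path's edges. The only point requiring care is the bookkeeping confirming the off-diagonal vanishing, i.e. that the deletion shifts the subdiagonal of $M$ onto the main diagonal of $N$ and pushes every remaining nonzero entry strictly above it. An equivalent formulation deletes the first column and last row instead, and one could alternatively argue by a short induction expanding $\det N$ along its first column, but the triangularity observation makes even that unnecessary.
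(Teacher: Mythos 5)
Your proof is correct. Note that the paper itself states this lemma without any proof (it is left as an immediate observation, implicitly relying on the critical-ideals literature \cite{corrval}, where the algebraic co-rank of a path is computed), so there is no paper argument to compare against; your write-up supplies exactly the standard justification. The key step checks out: deleting the first row and last column of $tI_k-A(P_k)$ leaves the matrix $N$ with $N_{ab}=M_{a+1,b}$, which is upper triangular with diagonal entries $M_{a+1,a}=-1$ coming from the path edges, so this $(k-1)$-minor equals $(-1)^{k-1}$, forcing $A_{k-1}(P_k,t)=\langle 1\rangle$ and hence $\gamma_A(P_k)\geq k-1$, which is precisely the paper's definition of $P_k$ being forbidden for $\mathcal{C}_{\leq k-2}$.
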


The following theorem give us the characterization of $\mathcal C_{\leq 1}$ and since the graphs in $\mathcal K_{\leq 1}$ are regular, we have $\mathcal C_{\leq 1}=\mathcal K_{\leq 1}$.
Its proof is similar to Theorem 3.3 and Corollary 3.4 of \cite{alfaval}.

\begin{theorem}
  Let $G$ be connected simple graph.
  Then the following statements are equivalent.
  \begin{enumerate}
    \item $G\in \mathcal C_{\leq 1}$,
    \item $G\in \mathcal K_{\leq 1}$
    \item $G$ is $P_3$-free
    \item $G$ is a complete graph
  \end{enumerate}
\end{theorem}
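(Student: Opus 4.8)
The plan is to prove the cycle of implications by separating the purely graph-theoretic equivalence $(3)\Leftrightarrow(4)$ from the algebraic content that links $(1)$ and $(2)$ to completeness of $G$. First I would dispose of $(3)\Leftrightarrow(4)$ with a standard argument: if $G=K_n$, any three vertices induce a triangle, so $G$ is $P_3$-free; conversely, if a connected $G$ is not complete, pick nonadjacent $u,w$ and a shortest $u$--$w$ path, whose first three vertices induce a $P_3$. Hence among connected graphs, $P_3$-freeness and completeness coincide.

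Next, the equivalence $(1)\Leftrightarrow(4)$ I would obtain entirely from the two lemmas already available. For $(1)\Rightarrow(3)$ I argue by contrapositive: if $G$ contains an induced $P_3$, then Lemma~\ref{lemma:pathisforb} (with $k=3$) says $P_3$ is forbidden for $\mathcal C_{\le 1}$, i.e.\ $A_2(P_3,t)=\langle 1\rangle$; monotonicity (Lemma~\ref{lemma:CharofInducedsubgraphs}) then gives $\langle 1\rangle = A_2(P_3,t)\subseteq A_2(G,t)$, so $A_2(G,t)$ is trivial, $\gamma_A(G)\ge 2$, and $G\notin\mathcal C_{\le 1}$. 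For the reverse $(4)\Rightarrow(1)$ I would compute the $2$-minors of $tI_n-A(K_n)$ directly: the diagonal entries are $t$ and the off-diagonal entries are $-1$, so every $2\times 2$ minor equals one of $t^2-1$, $-(t+1)$, or $0$, giving $A_2(K_n,t)=\langle t^2-1,\,t+1\rangle=\langle t+1\rangle\neq\langle 1\rangle$. Since $A_1(K_n,t)$ is trivial, this shows $\gamma_A(K_n)\le 1$, so $K_n\in\mathcal C_{\le 1}$.

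It remains to splice $(2)$ into the chain. For $(4)\Rightarrow(2)$ I would reuse the computation above together with Proposition~\ref{prop:characterisitidealsinvariantfactors}: as $K_n$ is $(n-1)$-regular, evaluating $A_2(K_n,t)=\langle t+1\rangle$ at $t=n-1$ yields $\langle\Delta_2(L(K_n))\rangle=\langle n\rangle$, whence $d_2(L(K_n))=n\ge 2$ and only $d_1=1$, i.e.\ $\phi(L(K_n))=1$ and $K_n\in\mathcal K_{\le 1}$.

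The direction $(2)\Rightarrow(4)$ is where I expect the main obstacle, because the critical group---unlike the characteristic ideals---is \emph{not} monotone under induced subgraphs, so the forbidden-$P_3$ argument does not transfer directly. Here I would rely on the fact that every graph in $\mathcal K_{\le 1}$ is regular (the Lorenzini--Vince characterization \cite{lorenzini1991,vince}); granting this, any $G\in\mathcal K_{\le 1}$ is a regular graph, and the inequality $\gamma_A(G)\le\phi(L(G))\le 1$ valid for regular graphs places $G$ in $\mathcal C_{\le 1}$, so $(1)\Rightarrow(4)$ finishes the job. Thus the only genuinely non-formal input is the regularity of $\mathcal K_{\le 1}$-graphs; everything else reduces to the two monotonicity/forbidden-path lemmas and the single minor computation for $K_n$.
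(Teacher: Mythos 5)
Your proof is correct and follows essentially the same route as the paper's: establish $(1)\Leftrightarrow(3)\Leftrightarrow(4)$ via monotonicity of characteristic ideals (Lemma~\ref{lemma:CharofInducedsubgraphs}), the forbidden path $P_3$ (Lemma~\ref{lemma:pathisforb}), and the explicit non-trivial second characteristic ideal of complete graphs, and then splice in $(2)$ exactly as the paper does, by evaluating at the vertex degree via Proposition~\ref{prop:characterisitidealsinvariantfactors} for one direction and invoking the known regularity of graphs in $\mathcal K_{\leq 1}$ (Lorenzini--Vince) for the other, which is precisely the paper's remark that $\mathcal C_{\leq 1}=\mathcal K_{\leq 1}$ follows from that regularity. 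The only blemish is cosmetic: for $n=2$ one has $A_2(K_2,t)=\langle t^2-1\rangle$ rather than $\langle t+1\rangle$, which affects neither the non-triviality nor the evaluation argument.
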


Now, before to give the characterizations of the graphs in $\mathcal C_{\leq 2}$, we give an explicit formula of the characteristic ideals of complete graphs and complete multipartite graphs, and few structural results needed for the characterization.

\begin{lemma}\cite[Proposition 3.15 \& Theorem 3.16]{corrval}\label{lemma:CharCompletegraphs}
Let $G$ be a complete graph with $n$ vertices.
Then
\[
A_j(G,t)=
\begin{cases}
\left\langle (t+1)^{j-1}\right\rangle & 1\leq j\leq n-1,\\
\left\langle (t-n+1)(t+1)^{n-1}\right\rangle & j= n.
\end{cases}
\]
\end{lemma}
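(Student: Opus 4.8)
The plan is to exploit the very simple structure of $tI_n-A(G)$ for a complete graph. Since $A(K_n)=J_n-I_n$, where $J_n$ is the all-ones matrix, we have $tI_n-A(K_n)=(t+1)I_n-J_n$. Writing $u=t+1$, every $j\times j$ submatrix obtained by choosing strictly increasing row indices $\mathcal{I}$ and column indices $\mathcal{J}$ has the form $uP-J_j$, where $P$ is the $0$-$1$ matrix with a $1$ in position $(p,q)$ exactly when the $p$-th chosen row equals the $q$-th chosen column. Because $\mathcal{I}$ and $\mathcal{J}$ are increasing, $P$ has at most one $1$ in each row and column, i.e.\ $P$ is a partial permutation matrix. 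Thus the entire problem reduces to understanding $\det(uP-J_j)$ as a polynomial in $u$.

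First I would prove the inclusion $A_j(G,t)\subseteq\langle(t+1)^{j-1}\rangle$ by showing that $u^{j-1}$ divides every such determinant. Expanding $\det(uP-J_j)$ by multilinearity in the columns, and writing the $c$-th column of $uP-J_j$ as $u\,p_c-\mathbf{1}$ (with $p_c$ the $c$-th column of $P$ and $\mathbf{1}$ the all-ones vector), produces a sum over subsets $S\subseteq[j]$ of terms $u^{|S|}$ times a determinant whose columns are $p_c$ for $c\in S$ and $-\mathbf{1}$ for $c\notin S$. Any term in which two or more columns equal $-\mathbf{1}$ vanishes through having a repeated column, so only the subsets with $|S|\ge j-1$ survive. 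Every surviving term is therefore divisible by $u^{j-1}=(t+1)^{j-1}$, hence so is each $j$-minor. This handles $\subseteq$ for all $j$; in particular it recovers divisibility of the full determinant by $(t+1)^{n-1}$ when $j=n$.

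Next I would establish the reverse inclusion for $1\le j\le n-1$ by exhibiting a single $j$-minor equal to $\pm(t+1)^{j-1}$. The natural candidate is the submatrix on rows $\{1,\dots,j\}$ and columns $\{1,\dots,j-1,k\}$ for any $k>j$; such a $k$ exists precisely because $j\le n-1$, which is where the hypothesis enters. In this submatrix the last column is identically $-1$ (its index lies outside the chosen rows), and subtracting the last row from each of the first $j-1$ rows clears the off-diagonal $-1$'s, leaving a matrix whose first $j-1$ rows are diagonal with entry $t+1$ and whose final row is all $-1$; expanding along the last column yields $\pm(t+1)^{j-1}$. Combined with the first step this gives $A_j(G,t)=\langle(t+1)^{j-1}\rangle$ for $1\le j\le n-1$. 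Finally, for $j=n$ the only $n$-minor is $\det((t+1)I_n-J_n)$, which I would compute from the spectrum of $J_n$ (eigenvalue $n$ with multiplicity $1$ and $0$ with multiplicity $n-1$) as $(t+1-n)(t+1)^{n-1}=(t-n+1)(t+1)^{n-1}$, giving the stated generator.

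I expect the main obstacle to be the reverse inclusion rather than the divisibility: one must produce a minor attaining exactly $(t+1)^{j-1}$ with no extra factor of $t+1$, and the sign bookkeeping in the row reduction and final cofactor expansion must be tracked carefully. It is also precisely here that the cutoff at $j=n-1$ is forced, since for $j=n$ no extra column index $k$ is available, and the only surviving minor is the principal determinant $(t-n+1)(t+1)^{n-1}$, which carries the additional factor $t-n+1$.
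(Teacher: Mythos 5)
Your proof is correct, and it is necessarily a different route from the paper's, because the paper gives no proof of this lemma at all: the statement is quoted from Corrales and Valencia \cite{corrval} (Proposition 3.15 and Theorem 3.16), where it is obtained by computing the full multivariate critical ideals of $\diag(x_1,\dots,x_n)-A(K_n)$ and then specializing $x_1=\cdots=x_n=t$. Your argument instead is a direct, self-contained computation for the one-variable characteristic ideals, and all three steps check out: (i) writing any $j\times j$ submatrix of $(t+1)I_n-J_n$ as $uP-J_j$ with $P$ a partial permutation matrix (for this you only need that the chosen row indices and the chosen column indices are distinct, not that they are increasing) and expanding by multilinearity in the columns kills every term with two columns equal to $-\mathbf{1}$, so every $j$-minor is divisible by $(t+1)^{j-1}$; (ii) for $j\leq n-1$ the minor on rows $\{1,\dots,j\}$ and columns $\{1,\dots,j-1,k\}$ with $k>j$ reduces, exactly as you describe, to $-(t+1)^{j-1}$, and the sign is harmless since ideals absorb units, giving the reverse inclusion; (iii) the unique $n$-minor is the characteristic polynomial of $J_n$ evaluated at $t+1$, namely $(t-n+1)(t+1)^{n-1}$. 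As for what each approach buys: the citation route yields the stronger multivariate statement, which is the form this literature actually runs on (Gao's formula for complete multipartite graphs, Lemma~\ref{lemma:CharCompleteMultipartite}, is of the same multivariate type), whereas your proof is shorter and elementary, and makes completely transparent both where the factor $(t+1)^{j-1}$ comes from and why the cutoff at $j=n-1$ occurs -- no extra column index $k$ is available when $j=n$.
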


\begin{lemma}\cite[Theorem 3.2]{yibo}\label{lemma:CharCompleteMultipartite}
  Let $G$ be a complete multipartite graph with $m\geq 2$ parts of size $r_1,\dots,r_m\geq2$.
  Let $n=\sum r_i$.
  Then
  \[
  A_j(G,t)=
\begin{cases}
\langle1\rangle & j\leq m-1,\\
\left\langle (m-1)t^{j-m},t^{j-m+1} \right\rangle & m\leq j\leq n-m,\\
\left\langle t^{j-m+1}\prod_{a=1}^{m-n+j-1}(t+r_{i_a}), P \right\rangle & n-m < j < n\,\\
\left\langle \sum_{a=0}^m e_a(r_1,\dots,r_m)t^{n-a} \right\rangle & j = n,\\
\end{cases}
\]
where $P$ is equal to 
\scalebox{0.7}{
$\displaystyle\left\{ \sum_{a=0}^{m-k}(k-1+a)e_a(r_{i_1},\dots, r_{i_{m-k}})t^{j-k-a}\; :\; k=n-j\geq1 \text{ and } 1\leq i_1<\cdots< i_{m-k}\leq m \right\}
$
},
and $e_a(s_1,\dots,s_l)$ is the elementary symmetric polynomial of degree $a$ in $l$ variables, {\it i.e.},
\[
e_a(s_1,\dots,s_l)=\sum_{1\leq s_{i_1}<\cdots< s_{i_a}\leq l} s_{i_1}\cdots s_{i_a}.
\]
\end{lemma}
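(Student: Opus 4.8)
The plan is to compute the ideals $A_k(G,t)=I_k(tI_n-A(G))$ directly, by first bringing $tI_n-A(G)$ to a near block-triangular form by unimodular row and column operations over $\mathbb{Z}[t]$ (which preserve every determinantal ideal), and then reading off the minors. Ordering the vertices part by part, $tI_n-A(G)$ is the block matrix with diagonal blocks $tI_{r_i}$ and off-diagonal blocks $-J$ (where $J$ is the all-ones matrix). Choosing one representative vertex per part, I would subtract the representative row from each remaining row of that part, and do the same with columns; since two vertices of one part differ only in a diagonal entry, each resulting ``difference'' row (resp.\ column) collapses to $t(e_a-e_{\mathrm{rep}})$, supported on two coordinates. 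Adding each part's non-representative columns back into its representative column then cancels the remaining $-t$ entries, and the matrix becomes equivalent to
\[
M:=\begin{pmatrix} tI_m-C & E \\ 0 & tI_{n-m}\end{pmatrix},
\]
where $C=\mathbf 1\,\mathbf r\trans-\diag(r_1,\dots,r_m)$ is the quotient matrix of the partition into parts, and $E$ is the $m\times(n-m)$ matrix whose column attached to a non-representative vertex of part $i$ equals $e_i-\mathbf 1$; in particular the columns of $E$ repeat within each part, and $\det(tI_m-C)=\prod_i(t+r_i)-\sum_i r_i\prod_{k\ne i}(t+r_k)$ is, up to the factor $t^{n-m}$, the characteristic polynomial of $G$.

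Next I would analyze the $k$-minors of $M$. Because its lower-left block is zero and its lower-right block is the diagonal $tI_{n-m}$, any nonzero $k$-minor using $q$ of the bottom (``difference'') rows is forced to use, for each such row, the single column in which that row is nonzero, contributing a factor $t^q$; what remains is a $(k-q)$-minor of the top block $[\,tI_m-C\mid E\,]$ whose columns are \emph{disjoint} from the $q$ already-consumed columns. Thus $A_k(G,t)$ is generated by the products $t^{q}\mu$, with $\mu$ ranging over the realizable complementary minors of the $m$-row block, and I would extract generators range by range. For $j\le m-1$ one exhibits a unit minor: an $(m-1)\times(m-1)$ submatrix of $E$ formed from the columns $e_1-\mathbf 1,\dots,e_{m-1}-\mathbf 1$ together with a suitable extra row has determinant $\pm1$, so $A_j(G,t)=\langle1\rangle$. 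For $m\le j\le n-m$ the two generators arise from the full $E$-minor $\det(I_m-J)=1-m$ (yielding $(m-1)t^{\,j-m}$) and from such a unit $(m-1)$-minor (yielding $t^{\,j-m+1}$), while the disjointness constraint is slack enough that every other minor lies in $\langle(m-1)t^{\,j-m},\,t^{\,j-m+1}\rangle$. For $n-m<j<n$ the shortage of non-representative columns forces the complementary minor to include progressively more columns of $tI_m-C$, whose subdeterminants produce the factors $t^{\,j-m+1}\prod_a(t+r_{i_a})$ and the elementary-symmetric family $P$; and for $j=n$ every difference row must be paired, leaving exactly $\det(tI_m-C)$, so $A_n(G,t)=\langle t^{n-m}\det(tI_m-C)\rangle$, the characteristic polynomial, expanded in the $e_a(r_1,\dots,r_m)$.

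The main obstacle is the regime $n-m<j<n$ (and, to a lesser degree, the middle regime): there the coupling between the $q$ consumed columns and the columns still available to the complementary minor is tight, so one must identify precisely which subdeterminants of $tI_m-C$ and of $E$ occur and prove, in both directions, that the resulting products generate exactly the stated ideal --- realizing each listed generator by an explicit minor and, conversely, showing every $k$-minor lies in the listed ideal. The elementary symmetric polynomials $e_a(r_{i_1},\dots)$ emerge here by expanding the determinants of the principal-type submatrices of $tI_m-C=\diag(t+r_i)-\mathbf 1\,\mathbf r\trans$ via the rank-one update (matrix determinant) formula, and the careful bookkeeping of signs and of the repeated columns of $E$ in assembling $P$ is where the argument is most delicate.
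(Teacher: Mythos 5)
First, a point of reference: the paper contains \emph{no} proof of this lemma --- it is quoted verbatim from Gao \cite[Theorem 3.2]{yibo} --- so your attempt can only be measured against the statement itself. Your reduction is correct and checkable: choosing one representative per part, subtracting the representative row from the other rows of its part, and then adding each part's non-representative columns into its representative column are operations invertible over $\mathbb{Z}$, and they transform $tI_n-A(G)$ into
\[
M=\begin{pmatrix} tI_m-C & E\\ 0 & tI_{n-m}\end{pmatrix},\qquad
C=\mathbf 1\,\mathbf r\trans-\diag(r_1,\dots,r_m),
\]
with each column of $E$ of the form $e_i-\mathbf 1$; and your key observation --- every nonzero $j$-minor of $M$ equals $t^{q}$ times a $(j-q)$-minor of $[\,tI_m-C\mid E\,]$ whose columns avoid the $q$ consumed ones --- is sound. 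The cases $j\le m-1$ and $j=n$ then follow, and the middle range $m\le j\le n-m$ is completable, but note that what is needed there is not that ``the disjointness constraint is slack'': it is the observation that at $t=0$ every available column of the top block is an integer multiple of some $e_i-\mathbf 1$, so any full $m\times m$ minor $\mu$ has $\mu(0)=0$ (two columns from the same part) or $\mu(0)=\pm(1-m)\cdot(\text{a product of some of the }r_i)$, hence $(m-1)\mid\mu(0)$; without this short argument even the middle case is not proved.

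The genuine gap is the range $n-m<j<n$, which is exactly where the lemma has content (the generators $t^{j-m+1}\prod_a(t+r_{i_a})$ and the family $P$). Here you offer no argument: saying one ``must identify precisely which subdeterminants of $tI_m-C$ and of $E$ occur and prove, in both directions, that the resulting products generate exactly the stated ideal'' restates the problem rather than solving it; none of the combinatorics (which column sets remain realizable once only $n-m-q$ non-representative columns survive, how the rank-one expansions of submatrices of $\diag(t+r_i)-\mathbf 1\,\mathbf r\trans$ assemble into exactly the polynomials of $P$, and why no minor escapes the ideal) is carried out. There is also a concrete inconsistency you would have to confront at $j=n$: your (correct) conclusion is $A_n(G,t)=\langle t^{n-m}\det(tI_m-C)\rangle$, which expands to $\sum_{a=0}^m(1-a)e_a(r_1,\dots,r_m)t^{n-a}$, whereas the displayed formula is $\sum_{a=0}^m e_a(r_1,\dots,r_m)t^{n-a}=t^{n-m}\prod_i(t+r_i)$; for $K_{2,2}$ these give $t^4-4t^2$ versus $t^4+4t^3+4t^2$, and only the former is $\det(tI_4-A(K_{2,2}))$. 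So the statement as printed appears to carry a typo inherited from the citation, and as written your proof and the statement contradict each other in this case. Until the range $n-m<j<n$ is actually executed and that discrepancy is resolved, what you have is a plausible program, not a proof.
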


\begin{lemma}\cite[Theorem 1]{O88}\label{lemma:pawfree}
  Let $G$ be a {\sf paw}-free connected graph.
  Then $G$ is either $K_3$-free or complete multipartite graph.
\end{lemma}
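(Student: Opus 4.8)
The plan is to reduce the statement to a forbidden-subgraph characterization of complete multipartite graphs and then exploit paw-freeness locally. Recall that a graph is complete multipartite if and only if it contains no induced $\overline{P_3}$ (an edge together with a vertex adjacent to neither of its endpoints): absence of this induced subgraph is exactly the condition that non-adjacency be a transitive relation, and its equivalence classes then become the parts. So, assuming $G$ is connected and {\sf paw}-free, if $G$ is $K_3$-free we are done, and otherwise it suffices to show that $G$ has no induced $\overline{P_3}$. Throughout I will use the reformulation of paw-freeness: for every triangle $T$ and every vertex $w\notin T$, the number $|N(w)\cap V(T)|$ is never exactly $1$, since a vertex seeing exactly one vertex of a triangle produces a {\sf paw}.

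First I would establish two preparatory facts under the standing hypotheses (connected, {\sf paw}-free, containing a triangle $T_0$). \emph{Every vertex lies in a triangle:} arguing by induction on the distance $d(v,T_0)$, a neighbour $u$ of $v$ at smaller distance lies in a triangle by induction, and since $v\sim u$ the reformulation forces $v$ to be adjacent to a second vertex of that triangle, producing a triangle through $v$. \emph{Every edge lies in a triangle:} given an edge $xz$, the vertex $x$ sits in some triangle $xpq$ by the previous fact, and since $z\sim x$ the reformulation forces $z$ to be adjacent to $p$ or $q$, giving a triangle on the edge $xz$. Both steps are short applications of the $|N(w)\cap V(T)|\neq 1$ rule.

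The main step is to rule out an induced $\overline{P_3}$. Suppose, for contradiction, that $x\sim z$ while a vertex $y$ is adjacent to neither $x$ nor $z$. By the second preparatory fact the edge $xz$ lies in a triangle $xzs$. If $y\sim s$, then $y$ sees exactly one vertex of the triangle $xzs$, a {\sf paw}. Otherwise $y$ is non-adjacent to all of $x,z,s$; here I would take a shortest path from $y$ to the vertex set $\{x,z,s\}$, whose length is at least $2$. Its second-to-last vertex $u$ is adjacent to the triangle, hence (by the reformulation) to at least two of $x,z,s$, say to $\alpha$ and $\beta$; then $u\alpha\beta$ is a triangle. Its third-to-last vertex $u'$ lies at distance $2$ from $\{x,z,s\}$, so $u'$ is adjacent to none of $x,z,s$, yet $u'\sim u$; thus $u'$ sees exactly one vertex of the triangle $u\alpha\beta$, again a {\sf paw}. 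This contradiction shows $G$ is $\overline{P_3}$-free, hence complete multipartite.

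I expect the shortest-path argument of the last paragraph to be the only genuinely non-local point: the preparatory lemmas are immediate from the single-vertex rule, but propagating a local {\sf paw} from a possibly far-away induced $\overline{P_3}$ back to a triangle requires connectivity together with the fact that along a shortest path to the triangle the distances decrease by exactly one, which is what guarantees the clean adjacency pattern (the vertex $u'$ missing the whole triangle while $u$ meets it in at least two vertices). Some care is needed to check that $u',u,\alpha,\beta$ are four distinct vertices, so that a genuine {\sf paw} really appears.
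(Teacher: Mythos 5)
Your proof is correct, but there is nothing in the paper to compare it against: the paper does not prove this lemma, it imports it verbatim from Olariu \cite{O88}, so what you have written is a self-contained substitute for a citation. Checking your argument step by step: the reduction to showing that $G$ is $(K_2+K_1)$-free (i.e.\ $\overline{P_3}$-free, equivalently that non-adjacency is transitive) is the standard characterization of complete multipartite graphs; the reformulation of {\sf paw}-freeness as ``no vertex outside a triangle sees exactly one of its vertices'' is exact; the two preparatory facts (every vertex, then every edge, lies in a triangle) follow correctly by your induction on distance, since a vertex adjacent to at least two vertices of a triangle spans a new triangle with them; and the final shortest-path argument is watertight --- along a shortest path from $y$ to $\{x,z,s\}$ the distance to that set drops by exactly one per step, so the third-to-last vertex misses all of $x,z,s$ while the second-to-last vertex sees at least two of them, and the four vertices $u',u,\alpha,\beta$ are automatically distinct, yielding a genuine induced {\sf paw} (the boundary case $k=2$, where $u'=y$, causes no trouble). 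The only points a written version should make explicit are the degenerate cases in the preparatory steps where the new vertex already lies in the triangle produced by induction (e.g.\ $z\in\{p,q\}$), which are trivial. Relative to the paper's choice, your proof buys self-containedness at the cost of roughly a page, and it loses nothing: it proves exactly the cited statement, by an elementary connectivity-plus-local-rule argument in the same spirit as Olariu's original short proof.
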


\begin{lemma}\cite[Proposition 1]{BvL}\label{lemma:P4pawfree}
  Let $G$ be a $\{P_4,K_3\}$-free connected graph, then $G$ is a complete bipartite graph.
\end{lemma}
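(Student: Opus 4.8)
The plan is to argue in two stages: first that $G$ is bipartite, and then that the bipartition is complete, each time extracting a forbidden induced subgraph whenever the desired property fails. Throughout I use only the two hypotheses ($P_4$-free and $K_3$-free) together with connectivity.

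For the first stage I would show that $G$ has no odd cycle. Suppose it did, and let $C$ be a shortest odd cycle. Minimality forces $C$ to be chordless, since a chord would split $C$ into two shorter cycles, one of which is odd, contradicting minimality. A chordless cycle of length $3$ is a triangle, which is excluded because $G$ is $K_3$-free; a chordless cycle of length at least $5$ contains four consecutive vertices $w_0,w_1,w_2,w_3$, and in a chordless cycle of length at least $5$ the endpoints $w_0,w_3$ are non-adjacent, so these four vertices induce a $P_4$, which is excluded because $G$ is $P_4$-free. Hence $G$ has no odd cycle and is therefore bipartite; being connected, it has a unique bipartition $(X,Y)$.

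For the second stage I would show that every vertex of $X$ is adjacent to every vertex of $Y$. Suppose instead that some $x\in X$ and $y\in Y$ are non-adjacent. Since $G$ is connected, choose a shortest $x$--$y$ path $P$. Because $x$ and $y$ lie in different parts of a bipartite graph, $P$ has odd length, and since $x,y$ are non-adjacent its length is at least $3$. A shortest path is always induced (a chord would yield a shorter path), so the first four vertices of $P$ induce a $P_4$, contradicting $P_4$-freeness. Therefore every pair $x\in X$, $y\in Y$ is adjacent, and $G\cong K_{|X|,|Y|}$ is complete bipartite.

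The argument is elementary, and I do not expect a serious obstacle: the only care needed is in the two ``a shortest object is chordless'' observations and in the degenerate case $G=K_1$, which is handled by convention ($K_1\cong K_{1,0}$). The conceptual point is simply that the two forbidden subgraphs are precisely the obstructions to the two properties at stake, triangles obstructing bipartiteness and induced $P_4$'s obstructing completeness of the bipartition. As an alternative of comparable length, one can instead invoke the cograph structure theorem (Seinsche): a connected $P_4$-free graph on at least two vertices is a join $G_1\vee G_2$, and then $K_3$-freeness forces each $G_i$ to be edgeless, since an edge inside $G_i$ together with any vertex of the other part would form a triangle; this again produces a complete bipartite graph.
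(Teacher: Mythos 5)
Your proof is correct, but there is nothing in the paper to compare it against: the paper does not prove this lemma, it simply cites it as Proposition~1 of Barrett, van der Holst and Loewy \cite{BvL}. Your argument is therefore a genuine addition rather than a rederivation. Both stages check out: a shortest odd cycle is indeed chordless (a chord splits it into two shorter cycles of lengths summing to an odd number, so one is a shorter odd cycle), a chordless odd cycle is either a $K_3$ or contains an induced $P_4$ on four consecutive vertices, and in the bipartite stage a shortest path between a non-adjacent pair $x\in X$, $y\in Y$ has odd length at least $3$ and is induced, yielding an induced $P_4$; the degenerate case $G=K_1$ is harmless. What your elementary route buys is self-containedness, at the cost of a few lines of parity bookkeeping. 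Your alternative sketch via Seinsche's theorem is also correct and is actually closer in spirit to the paper's own toolkit: the paper invokes Seinsche's join decomposition of connected $P_4$-free graphs in the proof of Theorem~\ref{teo:charC_leq3.1}, and the one-line observation that $K_3$-freeness forces both join factors to be edgeless (an edge in one factor plus any vertex of the other gives a triangle) immediately yields the complete bipartite conclusion. Either version would serve as a legitimate inline proof if the authors wished to make the lemma self-contained.
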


\begin{theorem}\label{theo:familyK2}
  Let $G$ be connected simple graph.
  Then the following statements are equivalent:
  \begin{enumerate}
    \item $G\in {\mathcal C}_{\leq 2}$,
    \item $G$ is $\{P_4,{\sf paw},K_5-e\}$-free, 
    \item $G$ is complete graph or $G$ is an induced subgraph of a complete tripartite graph.
  \end{enumerate}
\end{theorem}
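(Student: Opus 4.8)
The plan is to prove the cycle of implications $(1)\Rightarrow(2)\Rightarrow(3)\Rightarrow(1)$. The engine throughout is Lemma~\ref{lemma:CharofInducedsubgraphs}: since $H$ being an induced subgraph of $G$ gives $A_k(H,t)\subseteq A_k(G,t)$, triviality of $A_k(H,t)$ forces triviality of $A_k(G,t)$, so $\gamma_A$ is monotone under induced subgraphs, i.e. $\gamma_A(H)\le\gamma_A(G)$. Consequently membership in $\mathcal C_{\le2}$ is inherited by induced subgraphs, and any fixed graph with $\gamma_A\ge3$ is an obstruction to membership. I would combine this with the explicit ideal formulas of Lemmas~\ref{lemma:CharCompletegraphs} and~\ref{lemma:CharCompleteMultipartite} and the structural Lemmas~\ref{lemma:pawfree} and~\ref{lemma:P4pawfree}.

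For $(1)\Rightarrow(2)$ I would argue contrapositively: it suffices to show each of $P_4$, ${\sf paw}$ and $K_5-e$ has $\gamma_A\ge3$, since then by monotonicity any $G$ containing one of them as an induced subgraph satisfies $\gamma_A(G)\ge3$ and lies outside $\mathcal C_{\le2}$. For $P_4$ this is Lemma~\ref{lemma:pathisforb} with $k=4$. For ${\sf paw}$ and $K_5-e$ I would verify $A_3(\cdot,t)=\langle1\rangle$ by a finite computation of the $3$-minors of $tI-A$ and exhibiting an integer-polynomial combination equal to $1$; for instance, for the ${\sf paw}$ one finds the $3$-minors $t+1$ and $t^3-2t$ among the generators, and $t^3-2t-(t+1)(t^2-t-1)=1$ shows $1\in A_3({\sf paw},t)$. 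The case of $K_5-e$ is analogous (and is exactly what the SAGE routine in the text confirms).

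For $(2)\Rightarrow(3)$ I would invoke Lemma~\ref{lemma:pawfree}: a ${\sf paw}$-free connected graph is either $K_3$-free or complete multipartite. If $G$ is $K_3$-free, then $G$ is $\{P_4,K_3\}$-free and connected, so Lemma~\ref{lemma:P4pawfree} makes it complete bipartite, hence an induced subgraph of a complete tripartite graph. If instead $G=K_{r_1,\dots,r_m}$ is complete multipartite, I would show that if some part has size $\ge2$ and $m\ge4$, then choosing two vertices of a part of size $\ge2$ together with one vertex from each of three further parts produces five vertices having exactly one non-edge, i.e. an induced $K_5-e$, contradicting $(2)$. Hence either all parts are singletons, so $G$ is a complete graph, or $m\le3$, so $G$ is complete tripartite (with complete bipartite and the trivial cases absorbed as induced subgraphs of complete tripartite graphs). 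For $(3)\Rightarrow(1)$: by Lemma~\ref{lemma:CharCompletegraphs} a complete graph has $A_2=\langle t+1\rangle\neq\langle1\rangle$, so $\gamma_A=1\le2$; and since every complete tripartite graph is an induced subgraph of one with all parts of size $\ge2$ (replace each part by $\max(r_i,2)$ vertices), Lemma~\ref{lemma:CharCompleteMultipartite} with $m=3$ gives $A_3=\langle2,t\rangle\neq\langle1\rangle$, so $\gamma_A\le2$ for that graph, and monotonicity passes $\gamma_A\le2$ down to all of its induced subgraphs.

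I expect the main obstacle to lie in $(2)\Rightarrow(3)$: organizing the complete multipartite analysis so the reduction to "complete or three parts" is airtight, and checking that the degenerate configurations (parts of size $1$, and graphs too small for the $K_5-e$ extraction) either are already complete or embed as induced subgraphs of a complete tripartite graph. The ideal computations in $(1)\Rightarrow(2)$ for ${\sf paw}$ and $K_5-e$ are routine but must be done carefully to certify that $1$ actually lies in $A_3$, and the embedding argument in $(3)\Rightarrow(1)$ must ensure the enlarged complete tripartite graph has enough vertices ($n\ge6$) for the relevant branch of Lemma~\ref{lemma:CharCompleteMultipartite} to apply.
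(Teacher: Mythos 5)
Your proposal is correct and follows essentially the same route as the paper's proof: the identical cycle $(1)\Rightarrow(2)\Rightarrow(3)\Rightarrow(1)$, driven by monotonicity of characteristic ideals (Lemma~\ref{lemma:CharofInducedsubgraphs}), explicit $3$-minor certificates showing $1\in A_3$ for ${\sf paw}$ and $K_5-e$, the structural Lemmas~\ref{lemma:pawfree} and~\ref{lemma:P4pawfree}, and the formulas of Lemmas~\ref{lemma:CharCompletegraphs} and~\ref{lemma:CharCompleteMultipartite} applied to complete graphs and to a complete tripartite graph with all parts of size at least $2$. The only differences are cosmetic: your certificate $t^3-2t-(t+1)(t^2-t-1)=1$ for the ${\sf paw}$ (both $t^3-2t$ and $-(t+1)$ are indeed $3$-minors) replaces the paper's sum of two $3$-minors $(-t^2-t+1)+(t^2+t)=1$, and you spell out the $K_5-e$ extraction from a multipartite graph with $\geq 4$ parts and the part-enlargement embedding, which the paper leaves implicit.
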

\begin{proof}
$(1)\implies (2)$ By Lemma~\ref{lemma:pathisforb}, $P_4$ is forbidden for $\mathcal C_{\leq 2}$.
Now considering
\[
M=t{\sf I}_4 -A({\sf paw})=
\begin{bmatrix}
 t & -1 &  0 &  0\\
-1 &  t & -1 & -1\\
 0 & -1 &  t & -1\\
 0 & -1 & -1 &  t
\end{bmatrix},
\]
we can obtain that $A_1({\sf paw},t)$ and $A_2({\sf paw},t)$ are trivial since there are appropriate minors of $M$ equal to 1.
Let $$p(t)=\det(M[\{1,2,3\};\{1,2,4\}])=-t^2-t+1$$ and $$q(t)=\det(M[\{1,2,3\};\{1,3,4\}])=t^2+t.$$
Since $1=p(t)+q(t)\in A_3({\sf paw},t)$, then $A_3({\sf paw},t)$ is trivial.
Thus $\sf paw$ is forbidden for $\mathcal C_{\leq 2}$.
Now, let
\[
M=t{\sf I_5}-A(K_5-e)=
\begin{bmatrix}
 t &  0 & -1 & -1 & -1\\
 0 &  t & -1 & -1 & -1\\
-1 & -1 &  t & -1 & -1\\
-1 & -1 & -1 &  t & -1\\
-1 & -1 & -1 & -1 &  t
\end{bmatrix}.
\]
And, let $$p(t)=\det(M[\{1,2,3\};\{1,2,4\}])=-t^2-2t$$ and $$q(t)=\det(M[\{2,3,4\};\{2,3,5\}])=-t^2-2t-1.$$
Since $1=p(t)-q(t)$, then $A_3(K_5-e,t)$ is trivial.
From which follows that $K_5-e$ is forbidden for ${\mathcal C}_{\leq2}$.

$(2)\implies (3)$ By Lemma~\ref{lemma:pawfree}, a $\sf paw$-free graph is either $K_3$ or a complete multipartite graph.
In the first case, considering that $G$ is also $P_4$-free, then by Lemma~\ref{lemma:P4pawfree}, $G$ is a bipartite graph.
On the other hand, let $G$ be a complete multipartite graph with more than 3 partite sets.
Since $G$ is $\{K_5-e\}$-free, then each partite set has at most one vertex, that is, $G$ is a complete graph.

$(3)\implies (1)$ Lemma~\ref{lemma:CharCompletegraphs} states complete graphs have at most one trivial characteristic ideal.
Now let $G$ be a complete tripartite graph with each part of size at least 2.
By Lemma~\ref{lemma:CharCompleteMultipartite}, we have the third characteristic ideal is not trivial.
Thus by Lemma~\ref{lemma:CharofInducedsubgraphs}, if $H$ is an induced subgraph of $G$, then $H$ has at most 2 trivial characteristic ideals.
\end{proof}

The characterization of the regular graphs whose critical group have 2 invariant factors equal to 1 follows by evaluating the third characteristic ideal of these graphs at $t$ equal the degree of any vertex.

\begin{corollary}
Let $G$ be a connected simple regular graph.
Then $G\in \mathcal K_{\leq 2}$ if and only if $G$ is either a complete graph $K_r$, a regular complete bipartite graph $K_{r,r}$ or a regular complete tripartite graph $K_{r,r,r}$.
\end{corollary}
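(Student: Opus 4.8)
The plan is to prove both implications separately, throughout using the bridge between characteristic ideals and invariant factors of the Laplacian supplied by Proposition~\ref{prop:characterisitidealsinvariantfactors}. The key reformulation I would record first is this: for a connected $r$-regular graph $G$ on $n\geq 4$ vertices, $L(G)$ has invariant factors $d_1\mid\cdots\mid d_{n-1}$, and since these are non-decreasing under divisibility, $\phi(L(G))\leq 2$ holds if and only if $d_3\neq 1$, equivalently $\Delta_3(L(G))=d_1d_2d_3\neq 1$. By Proposition~\ref{prop:characterisitidealsinvariantfactors} this last condition is exactly the statement that $A_3(G,r)=\langle\Delta_3(L(G))\rangle$ is a non-trivial ideal of $\mathbb{Z}$, where $r$ is the common degree. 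Graphs on at most three vertices automatically satisfy $\phi(L(G))\leq 2$ (the rank of $L$ is at most $2$), so I would dispose of them at the outset and assume $n\geq 4$.

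For the forward implication, suppose $G$ is connected, regular, and lies in $\mathcal{K}_{\leq 2}$. Since $G$ is regular we have $\gamma_A(G)\leq\phi(L(G))\leq 2$, so $G\in\mathcal{C}_{\leq 2}$, and Theorem~\ref{theo:familyK2} applies: $G$ is complete or an induced subgraph of a complete tripartite graph. The structural step is to observe that an induced subgraph of a complete tripartite graph is itself a complete multipartite graph with at most three parts, since selecting vertices from the three classes leaves each nonempty selection as a full part. Imposing connectedness and regularity then forces all parts to have equal size: a complete multipartite graph $K_{a_1,\dots,a_m}$ assigns degree $n-a_i$ to the $i$-th class, so regularity gives $a_1=\cdots=a_m$, while connectedness rules out a single class of size $\geq 2$. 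With $m\leq 3$ this yields exactly $K_r$ (the complete case), $K_{r,r}$, or $K_{r,r,r}$.

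For the converse I would evaluate the explicit characteristic ideals at $t=r$ for each family, which is the only honest way to proceed: membership in $\mathcal{C}_{\leq 2}$ only tells us that $A_3(G,t)$ is non-trivial as an ideal of $\mathbb{Z}[t]$, and this does not by itself guarantee that the specialization $A_3(G,r)$ remains non-trivial in $\mathbb{Z}$. Using Lemma~\ref{lemma:CharCompletegraphs}, for $K_n$ (degree $n-1$) one has $A_3(K_n,t)=\langle(t+1)^2\rangle$, so $A_3(K_n,n-1)=\langle n^2\rangle\neq\langle 1\rangle$. Using Lemma~\ref{lemma:CharCompleteMultipartite} with $m=2$, for $K_{r,r}$ (degree $r$, $r\geq 3$) the middle case gives $A_3(K_{r,r},t)=\langle t\rangle$, so $A_3(K_{r,r},r)=\langle r\rangle\neq\langle 1\rangle$; and with $m=3$, for $K_{r,r,r}$ (degree $2r$, $r\geq 2$) the boundary case $j=m=3$ gives $A_3(K_{r,r,r},t)=\langle 2,t\rangle$, whence $A_3(K_{r,r,r},2r)=\langle 2,2r\rangle=\langle 2\rangle\neq\langle 1\rangle$. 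In each case $\Delta_3(L(G))\neq 1$, so $\phi(L(G))\leq 2$ and $G\in\mathcal{K}_{\leq 2}$.

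The step I expect to be the real obstacle is precisely this specialization. The arithmetic of the evaluated generators could in principle collapse the ideal to $\langle 1\rangle$: an ideal of the shape $\langle 2,\,t-1\rangle$, for instance, would specialize to the unit ideal at any even degree. So the argument genuinely needs the explicit generators of Lemmas~\ref{lemma:CharCompletegraphs} and~\ref{lemma:CharCompleteMultipartite}, together with the observation that the surviving generators specialize to $n^2$, $r$, and $\gcd(2,2r)=2$ respectively, none of which is a unit. I would also verify that the index ranges in Lemma~\ref{lemma:CharCompleteMultipartite} actually place $j=3$ in the claimed case (requiring $r\geq 3$ for $K_{r,r}$ and $r\geq 2$ for $K_{r,r,r}$), and finish the few small exceptional graphs, notably $C_4=K_{2,2}$, by a direct computation of the Smith normal form of $L$.
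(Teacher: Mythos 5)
Your proposal is correct and takes essentially the same route as the paper's proof: the forward direction passes through $\mathcal{C}_{\leq 2}$ via Theorem~\ref{theo:familyK2} and then imposes regularity and connectedness, while the converse evaluates the explicit third characteristic ideals from Lemmas~\ref{lemma:CharCompletegraphs} and~\ref{lemma:CharCompleteMultipartite} at the vertex degree, exactly as the paper does. The only differences are bookkeeping: you dispose of the graphs on at most three vertices and of $K_{2,2}$ by rank considerations and a direct Smith normal form computation, whereas the paper covers these same cases through the $r=2$ entries of its displayed ideal formulas.
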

\begin{proof}
Since $G$ is regular and $G\in \mathcal{C}_{\leq 2}$, then $G$ is either a complete graph $K_r$, a regular complete bipartite graph $K_{r,r}$ or a regular complete tripartite graph $K_{r,r,r}$.
On the other hand, let $G$ be any of these graphs.
By Lemmas~\ref{lemma:CharCompletegraphs} and \ref{lemma:CharCompleteMultipartite}, the third characteristic ideal of $G$ is
\[
A_3(G,t)=
\begin{cases}
\langle (t+1)^2(t-2) \rangle & K_{r+1} \text{ with } r= 2,\\
\langle (t+1)^2 \rangle & K_{r+1} \text{ with } r\geq 3,\\
\langle t^2,2t \rangle & K_{r,r} \text{ with } r=2,\\
\langle t \rangle & K_{r,r} \text{ with } r\geq 3,\\
\langle 2,t \rangle & K_{r,r,r} \text{ with } r\geq 2.\\
\end{cases}
\]
By evaluating $A_2(G,t)$ and $A_3(G,t)$ at $t$ equal the degree of any vertex of $G$, we obtain that the third invariant factor of $G$ is different than 1.
\end{proof}

A characterization of the graphs with Smith groups having 2 invariant factors equal to 1 can also be obtained by evaluating the third characteristic ideal of a complete graph or and induced subgraph of a tripartite graph at $t=0$, however, we will use simpler ideas in Section~\ref{sec:smithgroup}.

\section{Regular graphs with at most 3 trivial characteristic ideals}\label{sec:charreg}

In this section we will characterize the graphs with at most 3 trivial characteristic ideals.
As consequence, we will obtain a complete characterization of the regular graphs in $\mathcal{K}_{\leq 3}$.

\begin{figure}[h!]
\begin{center}
		\begin{tikzpicture}[scale=1,thick]
		\tikzstyle{every node}=[minimum width=0pt, inner sep=2pt, circle]
			\draw (-0.92,1.16) node[draw] (0) {};
			\draw (-0.77,0.63) node[draw] (1) {};
			\draw (0.52,1.43) node[draw] (2) {};
			\draw (0.66,0.89) node[draw] (3) {};
			\draw (-2.52,1.4) node[draw] (4) {};
			\draw (-2.59,0.83) node[draw] (5) {};
			\draw (-0.94,-0.96) node[draw] (6) {};
			\draw  (0) edge (4);
			\draw  (0) edge (2);
			\draw  (1) edge (2);
			\draw  (1) edge (5);
			\draw  (1) edge (6);
			\draw  (2) edge (3);
			\draw  (4) edge (5);
      \draw (5) -- (0) -- (3) -- (1) -- (4);
      \draw (0) -- (6);
		\end{tikzpicture}
	\end{center}
\caption{$S_4^{\mathbf{r}}$ with $\mathbf{r}=(2,1,-2,-2)$}
\label{fig:thickstar}
\end{figure}
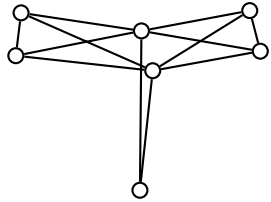

Given a graph $G=(V,E)$ and a vector ${\bf d}\in {\mathbb Z}^V$, the graph $G^{\bf d}$ is constructed as follows.
For each vertex $u\in V$, associate a new vertex set $V_u$, where $V_u$ is a clique of cardinality $-{\bf d}_u$ when ${\bf d}_u$ is negative, and $V_u$ is a stable set of cardinality ${\bf d}_u$ if ${\bf d}_u$ when positive.
Each vertex in $V_u$ is adjacent with each vertex in $V_v$ if and only if $u$ and $v$ are adjacent in $G$.
Then the graph $G$ is called the {\it underlying graph} of $G^{\bf d}$.
For instance, let $S_n$ denote the star graph with $n$ vertices; with one apex vertex and $n-1$ leaves.
In Figure~\ref{fig:thickstar} there is a drawing of $S_4^{\mathbf{r}}$ with $\mathbf{r}=(2,1,-2,-2)$, where the first entry of $\bf r$ is associated with the apex vertex.

\begin{figure}
\begin{tabular}{c@{\extracolsep{1cm}}c@{\extracolsep{1cm}}c@{\extracolsep{1cm}}c}
    \begin{tikzpicture}[scale=.7,thick]
    \tikzstyle{every node}=[minimum width=0pt, inner sep=2pt, circle]
        \draw (0:1) node[draw] (0) { };
        \draw (72:1) node[draw] (3) { };
        \draw (144:1) node[draw] (2) { };
        \draw (216:1) node[draw] (1) { };
        \draw (288:1) node[draw] (4) { };
        \draw  (0) edge (3);
        \draw  (0) edge (4);
        \draw  (1) edge (4);
        \draw  (2) edge (4);
    \end{tikzpicture}
&
    \begin{tikzpicture}[scale=.7,thick]
    \tikzstyle{every node}=[minimum width=0pt, inner sep=2pt, circle]
        \draw (0:1) node[draw] (3) { };
        \draw (72:1) node[draw] (0) { };
        \draw (144:1) node[draw] (2) { };
        \draw (216:1) node[draw] (4) { };
        \draw (288:1) node[draw] (1) { };
        \draw  (0) edge (3);
        \draw  (0) edge (4);
        \draw  (1) edge (3);
        \draw  (1) edge (4);
        \draw  (2) edge (4);
    \end{tikzpicture}
&
    \begin{tikzpicture}[scale=.7,thick]
    \tikzstyle{every node}=[minimum width=0pt, inner sep=2pt, circle]
        \draw (0:1) node[draw] (0) { };
        \draw (72:1) node[draw] (4) { };
        \draw (144:1) node[draw] (2) { };
        \draw (216:1) node[draw] (3) { };
        \draw (288:1) node[draw] (1) { };
        \draw  (0) edge (3);
        \draw  (0) edge (4);
        \draw  (1) edge (3);
        \draw  (2) edge (4);
        \draw  (3) edge (4);
    \end{tikzpicture}
&
    \begin{tikzpicture}[scale=.7,thick]
    \tikzstyle{every node}=[minimum width=0pt, inner sep=2pt, circle]
        \draw (0:1) node[draw] (3) { };
        \draw (72:1) node[draw] (1) { };
        \draw (144:1) node[draw] (2) { };
        \draw (216:1) node[draw] (4) { };
        \draw (288:1) node[draw] (0) { };
        \draw  (0) edge (3);
        \draw  (0) edge (4);
        \draw  (1) edge (3);
        \draw  (1) edge (4);
        \draw  (2) edge (4);
        \draw  (3) edge (4);
    \end{tikzpicture}
\\
$\Gaa$ & $\Gab$ & $\Gac$ & $\Gad$
\\
    \begin{tikzpicture}[scale=.7,thick]
    \tikzstyle{every node}=[minimum width=0pt, inner sep=2pt, circle]
        \draw (0:1) node[draw] (0) { };
        \draw (72:1) node[draw] (2) { };
        \draw (144:1) node[draw] (3) { };
        \draw (216:1) node[draw] (1) { };
        \draw (288:1) node[draw] (4) { };
        \draw  (0) edge (2);
        \draw  (0) edge (4);
        \draw  (1) edge (3);
        \draw  (1) edge (4);
    \end{tikzpicture}
&
    \begin{tikzpicture}[scale=.7,thick]
    \tikzstyle{every node}=[minimum width=0pt, inner sep=2pt, circle]
        \draw (0:1) node[draw] (0) { };
        \draw (72:1) node[draw] (2) { };
        \draw (144:1) node[draw] (3) { };
        \draw (216:1) node[draw] (1) { };
        \draw (288:1) node[draw] (4) { };
        \draw  (0) edge (2);
        \draw  (0) edge (4);
        \draw  (1) edge (3);
        \draw  (1) edge (4);
        \draw  (2) edge (4);
    \end{tikzpicture}
&
    \begin{tikzpicture}[scale=.7,thick]
    \tikzstyle{every node}=[minimum width=0pt, inner sep=2pt, circle]
        \draw (0:1) node[draw] (2) { };
        \draw (72:1) node[draw] (0) { };
        \draw (144:1) node[draw] (3) { };
        \draw (216:1) node[draw] (1) { };
        \draw (288:1) node[draw] (4) { };
        \draw  (0) edge (2);
        \draw  (0) edge (3);
        \draw  (0) edge (4);
        \draw  (1) edge (3);
        \draw  (1) edge (4);
        \draw  (2) edge (4);
        \draw  (3) edge (4);
    \end{tikzpicture}
&
    \begin{tikzpicture}[scale=.7,thick]
    \tikzstyle{every node}=[minimum width=0pt, inner sep=2pt, circle]
        \draw (0:1) node[draw] (0) { };
        \draw (72:1) node[draw] (3) { };
        \draw (144:1) node[draw] (2) { };
        \draw (216:1) node[draw] (4) { };
        \draw (288:1) node[draw] (1) { };
        \draw  (0) edge (2);
        \draw  (0) edge (3);
        \draw  (0) edge (4);
        \draw  (1) edge (4);
        \draw  (2) edge (3);
        \draw  (2) edge (4);
    \end{tikzpicture}
\\
$\Gae$ & $\Gaf$ & $\Gag$ & $\Gah$
\\
    \begin{tikzpicture}[scale=.7,thick]
    \tikzstyle{every node}=[minimum width=0pt, inner sep=2pt, circle]
        \draw (0:1) node[draw] (0) { };
        \draw (60:1) node[draw] (1) { };
        \draw (120:1) node[draw] (2) { };
        \draw (180:1) node[draw] (3) { };
        \draw (240:1) node[draw] (5) { };
        \draw (300:1) node[draw] (4) { };
        \draw  (0) edge (4);
        \draw  (0) edge (5);
        \draw  (1) edge (5);
        \draw  (2) edge (5);
        \draw  (3) edge (5);
        \draw  (4) edge (5);
    \end{tikzpicture}
&
    \begin{tikzpicture}[scale=.7,thick]
    \tikzstyle{every node}=[minimum width=0pt, inner sep=2pt, circle]
        \draw (0:1) node[draw] (5) { };
        \draw (60:1) node[draw] (1) { };
        \draw (120:1) node[draw] (2) { };
        \draw (180:1) node[draw] (4) { };
        \draw (240:1) node[draw] (3) { };
        \draw (300:1) node[draw] (0) { };
        \draw  (0) edge (3);
        \draw  (0) edge (4);
        \draw  (0) edge (5);
        \draw  (1) edge (4);
        \draw  (1) edge (5);
        \draw  (2) edge (4);
        \draw  (2) edge (5);
        \draw  (3) edge (4);
        \draw  (3) edge (5);
    \end{tikzpicture}
&
    \begin{tikzpicture}[scale=.7,thick]
    \tikzstyle{every node}=[minimum width=0pt, inner sep=2pt, circle]
        \draw (0:1) node[draw] (0) { };
        \draw (60:1) node[draw] (4) { };
        \draw (120:1) node[draw] (2) { };
        \draw (180:1) node[draw] (3) { };
        \draw (240:1) node[draw] (1) { };
        \draw (300:1) node[draw] (5) { };
        \draw  (0) edge (3);
        \draw  (0) edge (4);
        \draw  (0) edge (5);
        \draw  (1) edge (3);
        \draw  (1) edge (4);
        \draw  (1) edge (5);
        \draw  (2) edge (3);
        \draw  (2) edge (4);
        \draw  (2) edge (5);
        \draw  (3) edge (5);
    \end{tikzpicture}
&
    \begin{tikzpicture}[scale=.7,thick]
    \tikzstyle{every node}=[minimum width=0pt, inner sep=2pt, circle]
        \draw (0:1) node[draw] (3) { };
        \draw (60:1) node[draw] (1) { };
        \draw (120:1) node[draw] (2) { };
        \draw (180:1) node[draw] (0) { };
        \draw (240:1) node[draw] (4) { };
        \draw (300:1) node[draw] (5) { };
        \draw  (0) edge (2);
        \draw  (0) edge (3);
        \draw  (0) edge (4);
        \draw  (0) edge (5);
        \draw  (1) edge (2);
        \draw  (1) edge (3);
        \draw  (1) edge (4);
        \draw  (1) edge (5);
        \draw  (2) edge (4);
        \draw  (2) edge (5);
        \draw  (3) edge (5);
        \draw  (4) edge (5);
    \end{tikzpicture}
\\
$\Gai$ & $\Gaj$ & $\Gak$ & $\Gal$
\\
&
    \begin{tikzpicture}[scale=.7,thick]
    \tikzstyle{every node}=[minimum width=0pt, inner sep=2pt, circle]
        \draw (0:1) node[draw] (0) { };
        \draw (360/7:1) node[draw] (2) { };
        \draw (720/7:1) node[draw] (1) { };
        \draw (1080/7:1) node[draw] (3) { };
        \draw (1440/7:1) node[draw] (5) { };
        \draw (1800/7:1) node[draw] (4) { };
        \draw (2160/7:1) node[draw] (6) { };
        \draw  (0) edge (2);
        \draw  (0) edge (3);
        \draw  (0) edge (4);
        \draw  (0) edge (5);
        \draw  (0) edge (6);
        \draw  (1) edge (2);
        \draw  (1) edge (3);
        \draw  (1) edge (4);
        \draw  (1) edge (5);
        \draw  (1) edge (6);
        \draw  (2) edge (4);
        \draw  (2) edge (5);
        \draw  (2) edge (6);
        \draw  (3) edge (4);
        \draw  (3) edge (5);
        \draw  (3) edge (6);
        \draw  (4) edge (5);
        \draw  (4) edge (6);
        \draw  (5) edge (6);
    \end{tikzpicture}
&
    \begin{tikzpicture}[scale=.7,thick]
    \tikzstyle{every node}=[minimum width=0pt, inner sep=2pt, circle]
        \draw (45:1.41) node[draw] (0) { };
        \draw (135:1.41) node[draw] (1) { };
        \draw (225:1.41) node[draw] (2) { };
        \draw (315:1.41) node[draw] (3) { };
        
        \draw (0:1) node[draw] (4) { };
        \draw (90:1) node[draw] (5) { };
        \draw (180:1) node[draw] (6) { };
        \draw (270:1) node[draw] (7) { };
        \draw  (0) edge (4);
        \draw  (0) edge (5);
        \draw  (0) edge (6);
        \draw  (0) edge (7);
        \draw  (1) edge (4);
        \draw  (1) edge (5);
        \draw  (1) edge (6);
        \draw  (1) edge (7);
        \draw  (2) edge (4);
        \draw  (2) edge (5);
        \draw  (2) edge (6);
        \draw  (2) edge (7);
        \draw  (3) edge (4);
        \draw  (3) edge (5);
        \draw  (3) edge (6);
        \draw  (3) edge (7);
        \draw  (4) edge (5);
        \draw  (4) edge (6);
        \draw  (4) edge (7);
        \draw  (5) edge (6);
        \draw  (5) edge (7);
        \draw  (6) edge (7);
    \end{tikzpicture}
&
\\
 & $\Gam$ & $\Gan$ &
\\
\end{tabular}
\label{fig:forbiddenchar3}
\caption{The family of graphs $\mathcal F$.}
\end{figure}
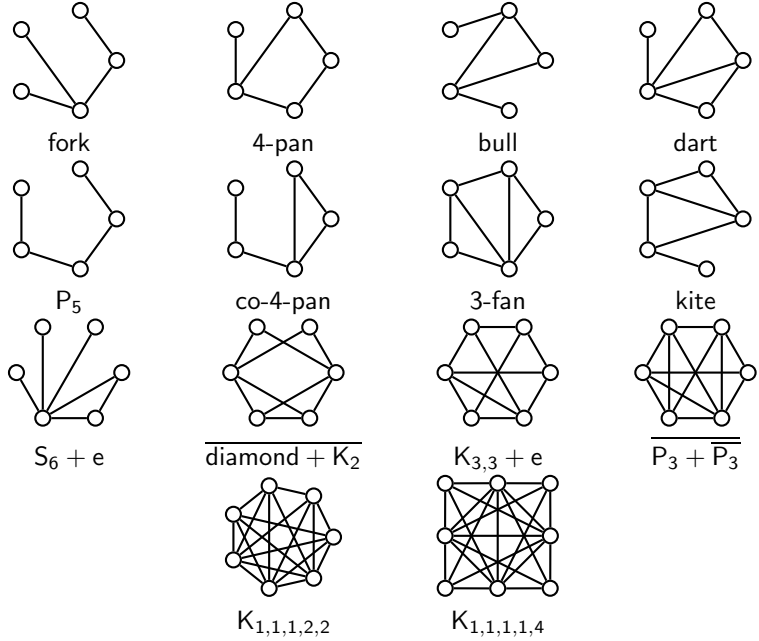

Let $\F$ denote the collection of graphs shown in Figure~\ref{fig:forbiddenchar3}.
In the following, we seek to find a structural characterization for graphs containing none of the 14 given graphs  in $\F$ as an induced subgraph.

\begin{lemma}\label{lemma:GinC_leq3isFfree}
Let $G$ be a connected graph in $\mathcal C_{\leq 3}$, then $G$ is $\F$-free.
\end{lemma}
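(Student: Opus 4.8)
The plan is to reduce the statement to a finite computation and then carry that computation out for each member of $\mathcal{F}$. Recall that a graph $F$ is forbidden for $\mathcal{C}_{\leq 3}$ exactly when $\gamma_A(F)\geq 4$, which by the nesting of determinantal ideals in \eqref{eqn:idealschain} is equivalent to the triviality of the single ideal $A_4(F,t)$: if $A_4(F,t)=\langle 1\rangle$, then $A_1(F,t),A_2(F,t),A_3(F,t)$ are trivial as well, so $\gamma_A(F)\geq 4$. Combining this with Lemma~\ref{lemma:CharofInducedsubgraphs}, whenever $F\in\mathcal{F}$ is an induced subgraph of $G$ we obtain $\langle 1\rangle = A_4(F,t)\subseteq A_4(G,t)$, hence $\gamma_A(G)\geq 4$ and $G\notin\mathcal{C}_{\leq 3}$. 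Taking the contrapositive, every connected $G\in\mathcal{C}_{\leq 3}$ must avoid all $14$ graphs of $\mathcal{F}$ as induced subgraphs. Thus the lemma follows once we verify that for every $F\in\mathcal{F}$ the fourth characteristic ideal $A_4(F,t)$ is trivial.

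To establish triviality of $A_4(F,t)$ for a fixed $F$ on $n$ vertices, I would write down the matrix $tI_n-A(F)$ and exhibit a short list of $4$-minors $p_1(t),\dots,p_s(t)$ together with coefficients $f_1(t),\dots,f_s(t)\in\mathbb{Z}[t]$ satisfying $\sum_i f_i(t)\,p_i(t)=1$; the presence of $1$ in $A_4(F,t)$ then forces $A_4(F,t)=\langle 1\rangle$. This is exactly the pattern already used for the $\sf paw$ and for $K_5-e$ in the proof of Theorem~\ref{theo:familyK2}, where two quadratic minors combine to $1$. In most cases it is enough to select two $4$-minors whose $\mathbb{Z}[t]$-combination collapses to $1$: concretely, one searches for a minor of the form $\pm 1 + t\,g(t)$ and pairs it with minors supplying the remaining multiples of $t$. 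One member of $\mathcal{F}$ can be dispatched with no computation at all, namely the path $\Gae=P_5$, which is forbidden for $\mathcal{C}_{\leq 3}$ directly by Lemma~\ref{lemma:pathisforb} (with $k=5$); for the complete multipartite members $\Gam$ and $\Gan$ one may, as an alternative to the minor certificate, read off $A_4$ from the explicit formula of Lemma~\ref{lemma:CharCompleteMultipartite} in the cases where it applies.

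The main obstacle is organizational rather than conceptual: $\mathcal{F}$ contains $14$ graphs with orders ranging from $5$ up to $8$, so a separate minor certificate must be produced for each, and for the graphs on seven and eight vertices ($\Gai$, $\Gaj$, $\Gak$, $\Gal$, $\Gam$, $\Gan$) the candidate $4\times 4$ submatrices are numerous. The delicate point is that $\mathbb{Z}[t]$ is not a principal ideal domain, so it does \emph{not} suffice to check that the chosen minors have no common complex root; one genuinely needs an explicit integer-polynomial identity $\sum_i f_i(t)\,p_i(t)=1$ (equivalently, coprimality of the minors over every residue field $\mathbb{F}_q[t]$ as well as over $\mathbb{Q}[t]$). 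In practice I would compute a Gr\"obner basis of $A_4(F,t)$ with the routine \texttt{CharIdeals} given above, confirm it equals $\langle 1\rangle$, and then extract from that computation the explicit certificate to record in the proof. Once all $14$ certificates are in place, the reduction of the first paragraph completes the argument.
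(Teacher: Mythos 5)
Your proposal is correct and takes essentially the same route as the paper: the paper's proof likewise reduces the lemma to verifying, by (Gr\"obner-basis) computation, that the fourth characteristic ideal of each of the $14$ graphs in $\F$ is trivial, and then applies Lemma~\ref{lemma:CharofInducedsubgraphs} to conclude that no graph in $\mathcal{C}_{\leq 3}$ can contain a member of $\F$ as an induced subgraph; your explicit certificate discussion and the $P_5$ shortcut via Lemma~\ref{lemma:pathisforb} only flesh out the same argument. One small caveat: Lemma~\ref{lemma:CharCompleteMultipartite} assumes every part has size at least $2$, so it does not literally apply to $\Gam$ or $\Gan$ (which have singleton parts), and those two graphs must be handled by the computational route like the rest --- your hedge ``in the cases where it applies'' already acknowledges this.
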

\begin{proof}
It follows by computing the fourth characteristic ideals of the graphs in $\F$ and see that they are trivial.
Then, by Lemma~\ref{lemma:CharofInducedsubgraphs}, $G$ cannot contain any graph in $\F$ as induced subgraph.
\end{proof}

\begin{theorem}\label{teo:charC_leq3.1}
A connected graph $G$ is $\F$-free if and only if it is an induced subgraph of one of the following:
\begin{enumerate}
\item[\textup{(1)}] $C_5$,
\item[\textup{(2)}] the {\sf triangular prism} $K_3 \Box K_2$,
\item[\textup{(3)}] a complete $4$-partite graph,
\item[\textup{(4)}]  $C_4^{\mathbf{r}}$, for some $-\mathbf{r}\in \mathbb{N}^4$, or
\item[\textup{(5)}] $S_4^{\mathbf{r}}$, for some $-\mathbf{r}\in \mathbb{N}^4$.
\end{enumerate}
\end{theorem}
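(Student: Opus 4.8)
The plan is to prove the two implications separately; the ``only if'' (structural) direction carries essentially all of the difficulty.

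For the ``if'' direction I would use that being $\F$-free is hereditary, so it suffices to verify that each of the five prescribed graphs, or each member of the two infinite blow-up families, induces no graph of $\F$. The graphs $C_5$ and $K_3\Box K_2$ are handled by finite inspection. For a complete $4$-partite $G$ I would invoke that every induced subgraph of a complete multipartite graph is again complete multipartite with no more parts; the only complete multipartite members of $\F$ are $\Gam$ and $\Gan$, which have five parts, so none embeds in $G$. For $C_4^{\mathbf r}$ and $S_4^{\mathbf r}$ with $-\mathbf r\in\mathbb N^4$ the point is that each $V_u$ is a clique of pairwise true twins, whence every induced subgraph is again a clique blow-up of an induced subgraph of the base graph $C_4$ or $S_4$. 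Since all connected induced subgraphs of $C_4$ and of $S_4$ have diameter at most $2$, so do their clique blow-ups, which already excludes the members of $\F$ of larger diameter ($\Gae$, $\Gaa$, $\Gac$, $\Gab$, $\ldots$); the few remaining bounded-diameter members of $\F$ are then ruled out by comparing their true-twin quotients against the short list of induced subgraphs of $C_4$ and of $S_4$.

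For the ``only if'' direction I would split on whether $G$ contains an induced paw. If $G$ is paw-free, Lemma~\ref{lemma:pawfree} gives that $G$ is triangle-free or complete multipartite. When $G$ is complete multipartite, $\Gam$- and $\Gan$-freeness forces either at most four parts, so $G$ lands in outcome (3), or at least five parts with at most one part of size $\ge 2$ and that part of size $\le 3$; the latter graphs are exactly $K_s\vee\overline{K_2}$ and $K_s\vee\overline{K_3}$, which are instances of $S_4^{\mathbf r}$, outcome (5). When $G$ is triangle-free I would use $\Gae$- and $\Gab$-freeness, together with the remaining triangle-free members of $\F$ and Lemma~\ref{lemma:P4pawfree}, to show that $G$ is an induced subgraph of $C_5$ (outcome (1)) or of a complete bipartite graph (outcome (3)).

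The main obstacle is the remaining case, where $G$ contains an induced paw and so is neither triangle-free nor complete multipartite, and must be forced into the prism, $C_4^{\mathbf r}$, or $S_4^{\mathbf r}$. Here I would fix a triangle $T$ and classify each remaining vertex by the type of its attachment to $T$ (adjacent to one, two, or three vertices of $T$), then use the five-vertex obstructions $\Gac$, $\Gad$, $\Gah$, $\Gag$, $\Gaa$ --- reinforced by $\Gai$, $\Gaj$, $\Gak$ on six vertices --- to prove that vertices sharing an attachment type form a module, that only boundedly many types can coexist, and that the $\F$-constraints force each such module to be a clique. This collapses $G$ onto one of the prime shapes $K_3\Box K_2$, $C_4$, or $K_{1,3}$, and reassembling the (clique) modules yields the claimed blow-up. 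Proving that the attachment classes are genuine modules and that no additional class can arise --- that is, controlling the simultaneous interaction of all fourteen forbidden graphs --- is where the bulk of the work lies.
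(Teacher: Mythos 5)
Your split into a paw-free case and a paw-containing case is workable for the easy half: the complete multipartite analysis via Lemma~\ref{lemma:pawfree} and the obstructions $\Gam$, $\Gan$, and the triangle-free analysis via $P_5$, $\Gaa$, $\Gab$ and Lemma~\ref{lemma:P4pawfree}, can both be completed (modulo a small omission: with five or more parts all of size one, $G$ is a complete graph, which is not of the form $K_s\vee\overline{K_2}$ or $K_s\vee\overline{K_3}$, though it is still covered by outcome (5)). The genuine gap is in the case you yourself flag as carrying the bulk of the work. Your plan there rests on the claim that, for a fixed triangle $T$, the vertices sharing an attachment type to $T$ form a module, and that $\F$-freeness forces each such module to be a clique. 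Both halves of this claim are false, and they fail already on the graphs the theorem is supposed to produce. In $C_4^{(-3,-1,-3,-1)}=(K_3+K_1)\vee(K_3+K_1)$, write $A$, $C$ for the two triangles and $b$, $d$ for the singletons, and take $T=\{a_1,a_2,c_1\}$ (this is even the triangle of an induced paw, namely $\{a_1,a_2,c_1,b\}$); then the class of vertices adjacent to all of $T$ is $(A\setminus T)\cup(C\setminus T)$, a clique but \emph{not} a module, since $b$ is adjacent to $C\setminus T$ but not to $A\setminus T$. Conversely, in $S_4^{(-2,-2,-2,-2)}=K_2\vee(K_2+K_2+K_2)$, taking $T$ to be one apex vertex together with the first leaf clique, the class of vertices attached to exactly the apex vertex of $T$ is the union of the other two leaf cliques: a module, but \emph{not} a clique. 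So attachment classes are in general disjoint unions of up to three cliques with nontrivial adjacencies to the rest of the graph, and the proposed collapse onto a prime shape cannot get started as described.

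This is precisely where the paper reaches for a different organizing tool. Instead of splitting on the paw, it splits on $P_4$, invoking Seinsche's theorem: a connected $P_4$-free graph is a join $G_1\vee G_2$. If $G$ contains an induced $P_4$, attachment analysis to the path (using the five-vertex members of $\F$) pins $G$ down to $P_4$, $C_5$, the house, or the prism; if $G$ is $P_4$-free, the join structure, combined with an induced $K_2+K_1$ inside one factor and the join forms of $\Gad$, $\Gag$, $\Gai$--$\Gan$, is what produces the sets that become the cliques of $C_4^{\mathbf{r}}$ and $S_4^{\mathbf{r}}$. To salvage your triangle-based scheme you would have to replace ``clique module'' by the correct invariant (classes that are disjoint unions of cliques with prescribed interactions), at which point you are effectively reconstructing the paper's join analysis; as written, your proposal does not prove the hard case.
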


\begin{proof}
It is straightforward to verify that graphs of the forms specified can induce no subgraph from $\F$. Suppose henceforth that $G$ is a connected $\F$-free graph; we show that $G$ has one of the five forms described above.

Since $G$ is connected, the well known result of Seinsche \cite{seinsche} implies that $G$ either contains $\Pf$  as an induced subgraph, or $G$ is the complement of a disconnected graph and hence is a join of two graphs with nonempty vertex sets.

Suppose first that $G$ contains $\Pf$ as an induced subgraph, and let $w,x,y,z$ be the vertices, in order, of such an induced path.

Since $G$ is $\{\Gaa,\Gab,\Gac, \Gae,\Gaf, \Gag,\Gah\}$-free, we conclude that any vertex of $G$ not in $\{w,x,y,z\}$ is adjacent to either none of these four vertices, or it is adjacent to both endpoints $w,z$ and at most one of the midpoints $x,y$. Hence we may partition the vertices of $G-\{w,x,y,z\}$ into three sets:

\bigskip
\begin{tabular}{rl}
$V_{wz}$: & vertices adjacent to $w,z$ and neither of $x,y$;\\
$V_{wxz}$: & vertices adjacent to $w,z$ and $x$ but not $y$;\\
$V_{wyz}$: & vertices adjacent to $w,z$ and $y$ but not $x$;\\
$U$: & vertices adjacent to no vertex of $\{w,x,y,z\}$.
\end{tabular}

\bigskip
We illustrate these sets in Figure~\ref{figure:P4freecase}.

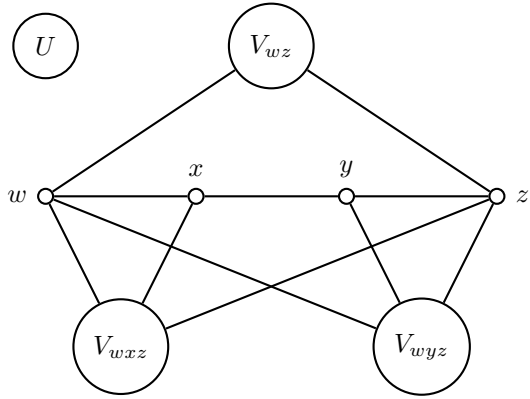
\begin{figure}[h!]
\begin{center}
\begin{tikzpicture}[scale=2,thick]
		\tikzstyle{every node}=[minimum width=0pt, inner sep=2pt, circle]
			\draw (-1.5,0) node[draw,label=left:{$w$}] (0) {};
			\draw (-0.5,0) node[draw,label=above:{$x$}] (1) {};
			\draw (0.5,0) node[draw,label=above:{$y$}] (2) {};
			\draw (1.5,0) node[draw,label=right:{$z$}] (3) {};
			\draw (0,1) node[draw,inner sep=5pt] (4) {$V_{wz}$};
			\draw (-1,-1) node[draw,inner sep=5pt] (5) {$V_{wxz}$};
			\draw (1,-1) node[draw,inner sep=5pt] (6) {$V_{wyz}$};
      \draw (-1.5,1) node[draw,inner sep=5pt] (7) {$U$};
			\draw  (0) edge (1);
			\draw  (0) edge (5);
			\draw  (0) edge (6);
			\draw  (1) edge (2);
			\draw  (2) edge (3);
			\draw  (3) edge (6);
			\draw  (3) edge (4);
			\draw  (4) edge (0);
			\draw  (5) edge (1);
			\draw  (5) edge (3);
			\draw  (6) edge (2);
		\end{tikzpicture}
\end{center}
\label{figure:P4freecase}
\caption{Diagram describing $G$.}
\end{figure}

If there is a pair of non-adjacent vertices in $V_{wz}$ then $G[\{w,y,z\}\cup V_{wz}]$ contains an induced copy of the $\Gab$. Moreover, if $|V_{wz}|\geq 2$ then $G[\{w,y,z\}\cup V_{wz}]$ contains an induced copy of the $\Gad$. Thus $|V_{wz}|\leq 1$. 

Similarly, we have$|V_{wxz}|\leq 1$ and $|V_{wyz}|\leq 1$. If $V_{wz}$ is nonempty, let us denote $V_{wz}=\{ v_{wz} \}$ and so on. 

Since the induced subgraph of $G$ having vertex set $\{x,y,z,v_{wz},v_{wxz} \} $ is not isomorphic to the $\Gab$, it must be the case that $v_{wz}$ is adjacent to $v_{wxz}$. However, then the induced subgraph on $\{x,y,z,v_{wz},v_{wxz} \} $ is isomorphic to the $\Gah$, a contradiction. Since a similar contradiction arises for vertices $v_{wz}$ and $v_{wyz}$, we conclude that if $V_{wz}$ is nonempty then both $V_{wxz}$ and $V_{wyz}$ are empty; if either $V_{wxz}$ or $V_{wyz}$ is nonempty, then $V_{wz}$ is empty.

Let $E[A,B]$ be the set of edges between two sets of vertices $A$ and $B$. If $E[V_{wxz},V_{wyz}]\neq \emptyset$, then $G[\{w,x,z,v_{wxz},v_{wyz} \}]$ contains the $\Gag$ as an induced subgraph, a contradiction, so there are no edges between $V_{wxz}$ and $V_{wyz}$.

Now note that if any vertex in $U$ has a neighbor in $V_{wz}$, then $G$ induces $P_5$, a contradiction. If $U$ has any neighbor in $V_{wxz}$ (or in $V_{wyz}$), then $G$ induces both $\Gac$ and $\Gab$. Since $G$ is connected, some vertex in $U$ would have a neighbor in $V_{wz}$ or $V_{wxz}$ or $V_{wyz}$ unless $U$ were empty, so we conclude that $U$ is empty. 

We conclude that $G$ is isomorphic to either $P_4$, $C_5$, the {\sf house} graph, or the {\sf triangular prism}. This completes the characterization of $G$ when $G$ induces $P_4$.

Suppose henceforth that $G$ is $P_4$-free. As described previously, since $G$ is a connected $P_4$-free graph, then $G$ can be written as $G=G_1 \vee G_2$, where $G_1$ and $G_2$ each have at least one vertex. Not every such graph is $\F$-free, as the graphs in Table~\ref{table:P4-freeInF} show.

\begin{table}
\begin{center}
\begin{tabular}{ll}
\hline
\hline
name & alternative name\\
\hline
$\Gad$ & $K_1 \vee (P_3+K_1)$\\
$\Gag$ & $P_4\vee K_1$\\
$\Gai$ & $K_1\vee (K_2+3K_1)$\\
$\Gaj$ & $2K_1\vee (K_2+2K_1)$\\
$\Gak$ & $3K_1\vee (K_2+K_1)$\\
$\Gal$ & $P_3\vee (K_2+K_1)=K_{1,2}\vee (K_2+K_1)$\\
$\Gam$ & $K_3\vee C_4$\\
$\Gan$ & $K_4\vee 4K_1$\\
\hline
\end{tabular}
\end{center}
\label{table:P4-freeInF}
\caption{Graphs in $\F$ that are join of two graphs.}
\end{table}

If $G$ is $K_2+K_1$-free, then, by Lemma~\ref{lemma:pawfree}, $G$ is a complete multipartite graph.
Since $G$ is $\{\Gam,\Gan\}$-free, if such a graph $G$ has five or more  partite sets, then no partite set can have four or more vertices, and at most one partite set can have two or  three vertices. Thus, if $G$ has five or more partite sets, then $G$ is isomorphic to $3K_1 \vee K_m$ or to $2K_1\vee K_m$ for some $m\geq 4$; which are included in the case (5).

If $G$ contains $K_2+K_1$, it must do so within $G_1$ or within $G_2$. Without loss of generality, suppose that $G_2$ contains $K_2+K_1$, and assume that $G_2$ cannot be written as a join of smaller graphs (if it could, we could redefine $G_1$ to include one of the vertex sets of this join). The forbidden subgraph assumptions imply that $G_1$ must be $\{P_3,3K_1\}$-free. Since $G_1$ is $P_3$-free, it is a disjoint union of cliques. 
And since $G_1$ is $3K_1$-free, there are at most two of these cliques. Hence $G_1$ has the form $K_p+K_q$, where $0 \leq p \leq q$ and $q \geq 1$ (by our assumption that the join $G=G_1\vee G_2$ was nontrivial).

If $p\geq 1$ and $q \geq 2$, then $G_1$ contains $K_2+K_1$ as an induced subgraph, and exchanging the roles of $G_1$ and $G_2$ in the arguments above imply that $G_2$ has the form $K_{p'}+K_{q'}$ for $p'\geq 1$ and $q' \geq 2$ and henceforth $G=(K_p+K_q)\vee (K_{p'}+K_{q'})$; which is included in case (4).

Next, we will consider the cases when $p=1,q=1$ and when $p=0$ in detail. First we establish some further structure for $G_2$.

Consider an induced copy of $K_2+K_1$ within $G_2$, and let $v$ be a vertex of $G_2$ not in this induced subgraph. Since $G$ is  $K_1 \vee (P_3+K_1)$-free, we may assume that $G_2$ is $\{P_4,P_3+K_1\}$-free.
And this implies that if $v$ is adjacent to one endpoint of the $K_2$-component in the $K_2+K_1$-subgraph, then it must be adjacent to the other endpoint.

Let $ab$ be the edge and let $c$ be the isolated vertex in an induced subgraph isomorphic to $K_2+K_1$. Let $X_d$ be the set of vertices in $G_2$ adjacent to none of $a,b,c$; let $X_{ab}$ be the set of vertices in $G_2$ adjacent to both $a$ and $b$ but not $c$; let $X_c$ be the set of vertices in $G_2$ adjacent to $c$ but not $a$ and $b$; and let $X_{abc}$ be the set of vertices in $G_2$ adjacent to all of $a,b,c$.

Now, if $p=q=1$, then, since $G$ is $\{2K_1\vee(K_2+2K_1)\}$-free, we may also conclude that $G_2$ is $K_2+2K_1$-free, which implies that $X_d$ is empty. 
And $X_{abc}$ is empty as well, this is because, otherwise, we would have $P_3 \vee (K_2+K_1)$ as an induced subgraph of $G$.
The vertex sets $X_{ab}$ and $X_c$ must be cliques, otherwise, $G$ would contain a $K_1 \vee (P_3+K_1)$ or a $2K_1\vee (K_2+2K_1)$, respectively.
And $E[X_{ab},X_{c}]$ is empty, since otherwise $G$ would contains $P_4$ as induced subgraph.
Therefore, $G_2$ is the disjoint union of two cliques, that is, $G=2K_1 \vee (K_r+K_s)$ with $r\geq 2$ and $s\geq 1$. Which is contained in case (4).

On the other hand, let us consider the case $p=0$ and $q\geq 1$. Then $G_1=K_q$ and $V(G_2)=\{a,b,c \}\cup X$, where $X=X_{ab}\cup X_{abc}\cup X_c\cup X_d$.

The sets $X_{ab}$ and $X_c$ are cliques, since otherwise $G_2[\{a,c \}\cup X_{ab} ]$ and $G_2[\{a,c \}\cup X_c]$ would, respectively, contain an induced copy of $(P_3+K_1)$. Also $X_d$ is a clique, since otherwise $G$ would contain an induced copy of $K_1 \vee (K_2+3K_1)$.

Furthermore $E[X_c,X_d]=\emptyset =E[X_{ab},X_d]$, otherwise $G_2[\{b,c\}\cup X_c\cup X_d]$ or $G_2[\{b,c\}\cup X_{ab}\cup X_d]$ would contain an induced copy of $P_3+K_1$, respectively. Likewise, $E[X_{ab},X_c]=\emptyset$ since otherwise $P_4$ would be an induced subgraph of $G_2[\{b,c\}\cup X_{ab}\cup X_c]$.

Moreover, $E[X_{abc},X_d]$ is of maximum size, that is, every vertex of $X_{abc}$ is adjacent to every vertex of $X_d$, since otherwise $G_2[\{b,c\}\cup X_{abc}\cup X_d]$ would contain an induced copy of $P_3+K_1$. Also $E[X_{abc},X_{ab}]$ and $E[X_{abc},X_c]$ are of maximum size because $G_2$ is $P_4$-free. 

By the argument above and our assumption that $G_2$ cannot be written as a join of smaller graphs we can conclude that $X_{abc}=\emptyset$.

Finally, if $p=0$ then $G=K_q \vee (K_r+K_s+K_t)$, where $r\geq 2$, $q,s\geq 1$ and $t\geq 0$. 
Which is included in case (5).

\end{proof}

\begin{lemma}\label{lemma:gb_C5_K3K2}
The third characteristic ideals of $C_5$ and $K_3 \Box K_2$ are trivial and
the fourth characteristic ideals of $C_5$ and $K_3 \Box K_2$ are non trivial. In fact, 
$A_4(C_5,t)=\langle t^2+t-1\rangle$ and $A_4(K_3 \Box K_2,t)=\langle t+2,5\rangle$.
\end{lemma}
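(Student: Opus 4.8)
The plan is to treat all four claims as small explicit determinantal computations, organized around two recurring ideas: a rank/eigenvalue argument showing every relevant minor lies in the claimed ideal, and a short list of explicitly evaluated minors showing the claimed generators lie in the ideal. The two triviality claims are immediate: for each graph I would exhibit a single $3$-minor equal to $\pm1$. For $C_5$, the submatrix of $tI_5-A(C_5)$ on rows $\{1,2,3\}$ and columns $\{1,2,5\}$ has determinant $-1$, and a similar unit $3$-minor exists for $K_3\Box K_2$; since $\langle \pm1\rangle=\langle1\rangle$, this forces $A_3$ to be trivial in both cases.

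For $A_4(C_5,t)$ I would argue both inclusions. The characteristic polynomial of $C_5$ factors as $(t-2)(t^2+t-1)^2$, so each root $\lambda$ of $t^2+t-1$ is an eigenvalue of $A(C_5)$ of multiplicity $2$; hence $\operatorname{rank}(\lambda I_5-A(C_5))=3$ and every $4$-minor vanishes at $\lambda$. As $t^2+t-1$ is monic with these two distinct roots, it divides every $4$-minor in $\mathbb{Z}[t]$, giving $A_4(C_5,t)\subseteq\langle t^2+t-1\rangle$. For the reverse inclusion I would compute two $4$-minors: deleting vertex $1$ leaves $P_4$, whose minor is $(t^2+t-1)(t^2-t-1)$, and deleting row $1$ and column $2$ yields $-(t-1)(t^2+t-1)$. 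Because $(t^2-t-1)-t(t-1)=-1$, the two cofactors $t^2-t-1$ and $t-1$ generate the unit ideal in $\mathbb{Z}[t]$, so these minors together generate $(t^2+t-1)\langle1\rangle=\langle t^2+t-1\rangle$, completing the equality.

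The main obstacle is $A_4(K_3\Box K_2,t)$, since $\langle t+2,5\rangle$ is non-principal and no single factor divides all $4$-minors. For the containment $A_4\subseteq\langle t+2,5\rangle$ I would work modulo $5$: as $\mathbb{Z}[t]/\langle t+2,5\rangle\cong\mathbb{F}_5$ via $f\mapsto f(-2)\bmod 5$, this containment is equivalent to every $4$-minor of $-2I_6-A$ being divisible by $5$, i.e.\ $\operatorname{rank}_{\mathbb{F}_5}(3I_6-A)\le 3$ (using $-2\equiv 3$). The spectrum of $K_3\Box K_2$ is $\{3,1,0,0,-2,-2\}$, so modulo $5$ the eigenvalue $-2$ collides with $3$; I would then exhibit three integer eigenvectors that stay independent over $\mathbb{F}_5$ -- the all-ones vector $(1,1,1,1,1,1)$ for eigenvalue $3$, and two vectors of the form $(a,b,c,-a,-b,-c)$ with $a+b+c=0$ for eigenvalue $-2$ -- proving the eigenvalue $3$ has geometric multiplicity $3$ over $\mathbb{F}_5$, hence $\operatorname{rank}_{\mathbb{F}_5}(3I_6-A)=3$ exactly.

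For the reverse inclusion $\langle t+2,5\rangle\subseteq A_4$ I would use three principal $4$-minors obtained by deleting vertex pairs: deleting a matching edge gives $C_4$ with minor $m_1=t^4-4t^2$, deleting a triangle edge gives the ${\sf paw}$ with minor $m_2=t^4-4t^2-2t+1$, and deleting two non-adjacent vertices gives $P_4$ with minor $m_3=t^4-3t^2+1$. Elementary integer combinations then extract the generators: $m_2-m_3=-t^2-2t$, $m_3-m_1=t^2+1$, and $m_1-m_2=2t-1$ all lie in $A_4$, whence $2(t^2+1)-t(2t-1)=t+2\in A_4$ and $2(t+2)-(2t-1)=5\in A_4$. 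This gives $\langle t+2,5\rangle\subseteq A_4$, and combined with the mod-$5$ rank argument yields $A_4(K_3\Box K_2,t)=\langle t+2,5\rangle$. Every determinant above is small enough to verify by hand or with the SAGE routine given earlier; the only genuinely delicate step is recognizing that the non-principal generator $5$ arises from the modular collision of the eigenvalues $-2$ and $3$.
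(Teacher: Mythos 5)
Your proof is correct --- I checked every computation --- but it takes a genuinely different route from the paper, which states this lemma with no written proof at all: like Lemma~\ref{lemma:gb_Cr4}, it is implicitly justified by the Gr\"obner-basis computation of the ideals via the SAGE routine of Section~\ref{section:charideals}. You replace that machine verification with a two-sided hand argument. For the upper containments you use spectral data: the roots of $t^2+t-1$ are eigenvalues of $C_5$ of multiplicity $2$, so $\operatorname{rank}(\lambda I_5-A(C_5))=3$ and every $4$-minor vanishes at both roots, and divisibility by the monic polynomial $t^2+t-1$ then holds in $\mathbb{Z}[t]$; for the prism you correctly identify $\langle t+2,5\rangle$ as the kernel of the evaluation $f\mapsto f(-2)\bmod 5$ and bound $\operatorname{rank}_{\mathbb{F}_5}(3I_6-A)\le 3$ using integer eigenvectors for the eigenvalues $3$ and $-2$, which collide modulo $5$. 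For the lower containments your explicit minors are right: the minor of $tI_5-A(C_5)$ obtained by deleting row $1$ and column $2$ is indeed $-t^3+2t-1=-(t-1)(t^2+t-1)$, the principal $4$-minors of the prism are the characteristic polynomials $t^4-4t^2$, $t^4-4t^2-2t+1$, $t^4-3t^2+1$ of $C_4$, the {\sf paw} and $P_4$, and your integer combinations do produce $t^2+t-1$, then $t+2$ and $5$. What the paper's computational approach buys is brevity and uniformity across all the lemmas of Section~\ref{sec:charreg}; what yours buys is a self-contained, hand-checkable argument that also explains structurally why the non-principal generator $5$ appears. Two small polish points: claiming the $\mathbb{F}_5$-rank is \emph{exactly} $3$ is unnecessary (only $\le 3$ is used), and for the prism's trivial third ideal you should name a unit $3$-minor explicitly --- for instance the triangular $3\times 3$ submatrix inside the induced $P_4$ on vertices $1,2,5,6$ --- rather than asserting its existence by analogy.
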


\begin{obs}\label{obs: 4-th minors}
In the following, let $L_{m}=(t+1){\sf I_m}-{\sf J_m}$. Note that, for any ${\bf r}$ such that $-{\bf r}\in \mathbb{N}^4$, the 4-minors of the matrices $t{\sf I_{\bf r_1+r_2+r_3+r_4}}-A\left(C_4^{\bf r}\right)$ and $t{\sf I_{\bf r_1+r_2+r_3+r_4}}-A\left(S_4^{\bf r}\right)$ are contained in the 4-minors of the matrices
\[
t{\sf I_{16}}-A\left(C_4^{(-4,-4,-4,-4)}\right)=
\begin{bmatrix}
L_4 & -{\sf J_4} & {\sf 0}_4 & -{\sf J_4}\\
-{\sf J_4} & L_4 & -{\sf J_4} & {\sf 0}_4\\
{\sf 0}_4 & -{\sf J_4} & L_4 & -{\sf J_4}\\
-{\sf J_4} & {\sf 0}_4 & -{\sf J_4} & L_4
\end{bmatrix}
\]
and
\[
t{\sf I_{16}}-A\left(S_4^{(-4,-4,-4,-4)}\right)=
\begin{bmatrix}
L_4 & -{\sf J_4} & -{\sf J_4} & -{\sf J_4}\\
-{\sf J_4} & L_4 & {\sf 0}_4 & {\sf 0}_4\\
-{\sf J_4} & {\sf 0}_4 & L_4 & {\sf 0}_4\\
-{\sf J_4} & {\sf 0}_4 & {\sf 0}_4 & L_4
\end{bmatrix},
\]respectively. 
Therefore, $A_4(C_4^{\mathbf{r}},t)\subseteq A_4\left(C_4^{(-4,-4,-4,-4)},t\right)$ and $A_4(S_4^{\mathbf{r}},t)\subseteq A_4\left(S_4^{(-4,-4,-4,-4)},t\right)$ for every $-{\bf r}$ such that $\mathbf{r}\in\mathbb{N}^4$. 
\end{obs}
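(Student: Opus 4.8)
The plan is to prove the stronger set-containment
$\minors_4\!\left(t{\sf I}_n-A(C_4^{\bf r})\right)\subseteq \minors_4\!\left(t{\sf I}_{16}-A(C_4^{(-4,-4,-4,-4)})\right)$, and likewise for $S_4$; the asserted ideal containments $A_4(C_4^{\bf r},t)\subseteq A_4(C_4^{(-4,-4,-4,-4)},t)$ and $A_4(S_4^{\bf r},t)\subseteq A_4(S_4^{(-4,-4,-4,-4)},t)$ are then immediate, since an ideal generated by a set of polynomials contains the ideal generated by any subset. Throughout I use that, because $-{\bf r}\in\mathbb{N}^4$, every set $V_u$ is a clique, so the underlying graph is $C_4$ (resp. $S_4$) and the vertices split into four cliques $V_1,\dots,V_4$ joined exactly as in the underlying graph; the big matrix is the special case $|V_u|=4$.

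A useful first reduction is this: if every $|V_u|\le 4$, then $C_4^{\bf r}$ is an induced subgraph of $C_4^{(-4,-4,-4,-4)}$ (choose $|V_u|$ of the four vertices of each clique), and the containment follows at once from Lemma~\ref{lemma:CharofInducedsubgraphs}. Thus the entire difficulty is confined to cliques of size $\ge 5$, where $C_4^{\bf r}$ is no longer an induced subgraph. To handle the general case I would take an arbitrary $4\times 4$ submatrix $S$, specified by four row-vertices and four column-vertices, and record its combinatorial type: for each clique $V_u$ the numbers $a_u,b_u$ of chosen rows and columns and the number $c_u$ of coincidences (vertices used as both a row and a column). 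The entries of $S$ are $t$ at a coincidence, $0$ when the two vertices lie in non-adjacent cliques, and $-1$ otherwise. Two observations drive the argument. First, vertices in a common clique are true twins, so if some clique contributes two row-only (or two column-only) vertices, then $S$ has two equal rows (resp. columns) and $\det S=0$; as $0$ is a $4$-minor of the big matrix (any off-diagonal $-{\sf J}_4$ block is singular), such $S$ cause no trouble. Second, for $\det S\ne 0$ each clique therefore contributes at most one row-only and one column-only vertex, so the number of distinct vertices of $S$ in any clique is at most $c_u+2\le 5$, with $5$ attained only when $c_u=3$ and $a_u=b_u=4$, i.e. when all four rows and all four columns lie in a single clique.

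It remains to embed. When each clique contributes at most four distinct vertices I would map the vertices of $S$ into the big graph clique by clique, order-preservingly, sending a coincident vertex to a single big-vertex; since inter-clique adjacencies agree in the two graphs this reproduces $S$ exactly, and the four available vertices per clique give enough room (and enough freedom among twins) to match the prescribed increasing ordering, hence $\det S$ verbatim. The one remaining configuration is $a_u=b_u=4$, $c_u=3$: here $S$ is a $4\times 4$ submatrix of $L_{|V_u|}$ with exactly three coincidences, whose determinant I compute to be $\pm(t+1)^3$. I realize this value in the big matrix by taking all four vertices of the clique $Q_u$ together with one filler vertex from a clique adjacent to $u$: because that clique is completely joined to $Q_u$, the filler row (or column) is all $-1$, exactly like the non-coincident row (or column) of $S$, and choosing which four vertices of $Q_u$ play which role (a twin permutation) matches the ordering, producing either sign $\pm(t+1)^3$ as needed.

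The same three steps prove the $S_4$ statement verbatim: the cliques are a centre-clique and three leaf-cliques, the only adjacencies are centre--leaf, the twin and zero-determinant observations are unchanged, and the spilling step uses that each leaf-clique is completely joined to the centre (and conversely), so an adjacent filler again contributes an all-$(-1)$ row or column. The step I expect to be the genuine obstacle is precisely this last one together with the second twin observation: one must verify that every $4$-minor coming from an oversized clique is either forced to vanish or can be reassembled from a size-$4$ clique plus a single adjacent filler, and that the global increasing order of rows and columns can always be matched so that the \emph{sign}, and not merely the absolute value, of the minor is reproduced.
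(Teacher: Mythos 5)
Your proof is correct, but it cannot be said to follow the paper's approach for the simple reason that the paper offers none: the statement is presented as a bare ``Note that\dots'' observation, with no justification beyond the implicit intuition that vertices inside a clique are true twins and a $4$-minor only involves four rows and four columns. Your argument is the honest fleshing-out of that intuition, and its real value is that you isolate the one configuration where the naive ``collapse each clique to four vertices'' idea genuinely fails: all four rows and all four columns chosen inside a single clique of size at least $5$ with exactly three row--column coincidences, which involves five distinct vertices and has minor $\pm(t+1)^3$, hence is not a minor of any single $4$-clique block. Your two supporting observations are sound -- two row-only (or column-only) twins in one clique force a zero minor, and $0$ is realized by any off-diagonal $-J_4$ block -- and your filler repair works: taking all four vertices of a $4$-clique together with one vertex of an adjacent clique contributes an all-$(-1)$ row or column, reproducing the determinant $-(t+1)^3$, and choosing which three clique vertices serve as coincident columns (or placing the filler before rather than after the clique in the vertex order) realizes the opposite sign, so even the literal set containment of minors holds. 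One simplifying remark: for the conclusion the paper actually uses, namely the ideal containments $A_4(C_4^{\mathbf{r}},t)\subseteq A_4\bigl(C_4^{(-4,-4,-4,-4)},t\bigr)$ and its $S_4$ analogue, the sign bookkeeping you flag as the ``genuine obstacle'' is dispensable, since an ideal contains the negatives of its generators; containment of minors up to sign suffices (and the set of minors is in any case only well defined up to sign, as it depends on the vertex labelling). So your proof establishes slightly more than the application requires, at the cost of the ordering/sign analysis in the last step.
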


\begin{lemma}\label{lemma:gb_Cr4}
Let ${\bf r}$ such that $-\mathbf{r}\in\mathbb{N}^4$. Then the fourth characteristic ideal of $C_4^{\mathbf{r}}$ is not trivial. Moreover, $A_4(C_4^{\mathbf{r}},t)\subseteq \langle t+1, 3\rangle$.
\end{lemma}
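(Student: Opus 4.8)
The plan is to use Observation~\ref{obs: 4-th minors} to collapse the family $\{C_4^{\mathbf r}\}$ to the single $16\times 16$ matrix, and then to prove the stronger containment $A_4\bigl(C_4^{(-4,-4,-4,-4)},t\bigr)\subseteq\langle t+1,3\rangle$. The lemma then follows at once: the inclusion $A_4(C_4^{\mathbf r},t)\subseteq A_4\bigl(C_4^{(-4,-4,-4,-4)},t\bigr)$ is exactly the content of the Observation, and $\langle t+1,3\rangle$ is a \emph{proper} ideal of $\mathbb Z[t]$ because $\mathbb Z[t]/\langle t+1,3\rangle\cong\mathbb F_3$, so that $1\notin\langle t+1,3\rangle$; hence $A_4(C_4^{\mathbf r},t)$ is nontrivial.

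Write $M=t{\sf I_{16}}-A\bigl(C_4^{(-4,-4,-4,-4)}\bigr)$ in the block form displayed in the Observation. A $4$-minor of $M$ is a polynomial in $\mathbb Z[t]$, and it lies in $\langle t+1,3\rangle$ if and only if its image under the ring homomorphism $\mathbb Z[t]\to\mathbb F_3$, $t\mapsto -1$, vanishes. Since determinants commute with ring homomorphisms, the first step is simply to reduce the whole matrix: under $t\mapsto -1$ followed by reduction mod $3$, each diagonal block $L_4=(t+1){\sf I_4}-{\sf J_4}$ becomes $-{\sf J_4}\equiv 2{\sf J_4}$, each edge block $-{\sf J_4}$ becomes $2{\sf J_4}$, and each of the two non-edge blocks $0$ stays $0$. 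Thus the reduced matrix is $\bar M=2\,(P\otimes J)$, where $J$ is the $4\times 4$ all-ones matrix over $\mathbb F_3$ and $P=I_4+A(C_4)$ is the $\{0,1\}$ block-pattern of $M$.

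Everything now reduces to a rank computation over $\mathbb F_3$. Because $\rank(A\otimes B)=\rank(A)\rank(B)$ and $\rank(J)=1$, we get $\rank(\bar M)=\rank(P)$. The eigenvalues of $A(C_4)$ are $2,0,0,-2$, so $\det P=3\cdot 1\cdot 1\cdot(-1)=-3\equiv 0\pmod 3$, giving $\rank_{\mathbb F_3}(P)\le 3$ and hence $\rank_{\mathbb F_3}(\bar M)\le 3$. Consequently every $4\times 4$ submatrix of $\bar M$ is singular, i.e.\ every $4$-minor of $M$ reduces to $0$ in $\mathbb F_3$ and therefore lies in $\langle t+1,3\rangle$. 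This establishes $A_4\bigl(C_4^{(-4,-4,-4,-4)},t\bigr)\subseteq\langle t+1,3\rangle$ and completes the argument.

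The only genuinely delicate point is the passage to $\bar M$ together with the recognition of its Kronecker structure: one must verify that all three block types collapse to scalar multiples of the \emph{same} rank-one matrix $J$ over $\mathbb F_3$, which is what forces the rank to drop from the a priori bound $4$ (the number of block rows) down to $\rank(P)\le 3$. If one prefers to avoid the Kronecker rank identity, the same conclusion is reached directly: within each block row of $\bar M$ all four rows coincide (every row of $J$ is the all-ones vector), so $\bar M$ has at most four distinct rows, and these four distinct rows satisfy exactly the linear dependence carried by the rows of the singular matrix $P$.
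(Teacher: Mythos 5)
Your proof is correct, but it takes a genuinely different route from the paper's. The paper's own proof is purely computational: it asserts that the Gr\"obner basis of the ideal generated by the $4$-minors of $t{\sf I_{16}}-A\bigl(C_4^{(-4,-4,-4,-4)}\bigr)$ equals $\langle t+1,3\rangle$ (a SAGE/Macaulay2 calculation), and then, exactly as you do, invokes Observation~\ref{obs: 4-th minors} to pass to arbitrary $\mathbf{r}$. You replace the machine computation by a hand-checkable argument: $\langle t+1,3\rangle$ is the kernel of the evaluation map $\mathbb{Z}[t]\to\mathbb{F}_3$, $t\mapsto-1$ (division by the monic polynomial $t+1$ makes this exact); under this map the $16\times 16$ matrix collapses to $2\,(P\otimes {\sf J_4})$ with $P={\sf I_4}+A(C_4)$; since $\det P=-3\equiv 0\pmod 3$ and $\rank {\sf J_4}=1$, the reduced matrix has rank at most $3$ over $\mathbb{F}_3$, so every $4$-minor of the original matrix maps to $0$ and hence lies in $\langle t+1,3\rangle$. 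Your remark that $\mathbb{Z}[t]/\langle t+1,3\rangle\cong\mathbb{F}_3$ also makes the non-triviality claim explicit, which the paper leaves implicit. The trade-off is as follows: your argument yields only the containment $A_4\bigl(C_4^{(-4,-4,-4,-4)},t\bigr)\subseteq\langle t+1,3\rangle$, which is all the lemma asserts and suffices for the later corollary (where only the non-unit status of the fourth invariant factor matters), and it has the advantage of being verifiable without software; the paper's Gr\"obner computation gives the stronger equality $A_4\bigl(C_4^{(-4,-4,-4,-4)},t\bigr)=\langle t+1,3\rangle$, which the authors use in the remark immediately following the lemma, and which your method alone cannot recover without exhibiting suitable $4$-minors generating the ideal.
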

\begin{proof}
The Gr\"obner basis of the ideal generated by the 4-minors of the matrix $t{\sf I_{16}}-A\left(C_4^{(-4,-4,-4,-4)}\right)$ is $\langle t+1, 3\rangle$, that is, $A_4\left(C_4^{(-4,-4,-4,-4)},t\right)=\langle t+1, 3\rangle$. Hence, by the argument in Observation \ref{obs: 4-th minors}, we have that $A_4(C_4^{\mathbf{r}},t) \subseteq \langle t+1, 3\rangle$ for any ${\bf r}$ such that $-\mathbf{r}\in\mathbb{N}^4$.
\end{proof}

Note that $A_4(C_4^{\mathbf{r}},t)= \langle t+1, 3\rangle$ and $A_3(C_4^{\mathbf{r}},t)= \langle 1\rangle$ when $\mathbf{r}_1,\mathbf{r}_2,\mathbf{r}_3,\mathbf{r}_1\leq-4$.
In a similar manner, given that the Gr\"obner basis of $A_4\left(S_4^{(-4,-4,-4,-4)},t\right)$ is $\langle t+1, 2\rangle$, we have the following

\begin{lemma}\label{lemma:gb_Sr4}
Let ${\bf r}$ such that $-\mathbf{r}\in\mathbb{N}^4$. Then the fourth characteristic ideal of $S_4^{\mathbf{r}}$ is not trivial.
Moreover, $A_4(S_4^{\mathbf{r}},t)\subseteq \langle t+1, 2\rangle$.
\end{lemma}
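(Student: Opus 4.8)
The plan is to mirror the proof of Lemma~\ref{lemma:gb_Cr4} verbatim, with $2$ replacing $3$, reducing the claim for arbitrary $\mathbf{r}$ to a single finite computation via Observation~\ref{obs: 4-th minors}. That observation already establishes that the $4$-minors of $t{\sf I}-A(S_4^{\mathbf{r}})$ form a subset of the $4$-minors of $t{\sf I}_{16}-A\left(S_4^{(-4,-4,-4,-4)}\right)$ for every $\mathbf{r}$ with $-\mathbf{r}\in\mathbb{N}^4$; consequently the ideals they generate satisfy $A_4(S_4^{\mathbf{r}},t)\subseteq A_4\left(S_4^{(-4,-4,-4,-4)},t\right)$. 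Thus it suffices to pin down the single ideal $A_4\left(S_4^{(-4,-4,-4,-4)},t\right)$.

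First I would compute the Gr\"obner basis of the ideal generated by the $4$-minors of the explicit $16\times16$ matrix $t{\sf I}_{16}-A\left(S_4^{(-4,-4,-4,-4)}\right)$ displayed in Observation~\ref{obs: 4-th minors}, working over $\mathbb{Z}[t]$ with the routine given in Section~\ref{section:charideals}. As announced in the text preceding the statement, this basis is $\langle t+1,2\rangle$, so $A_4\left(S_4^{(-4,-4,-4,-4)},t\right)=\langle t+1,2\rangle$. Combining this with the containment above yields $A_4(S_4^{\mathbf{r}},t)\subseteq\langle t+1,2\rangle$ for every admissible $\mathbf{r}$, which is the ``moreover'' part of the claim.

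It remains to deduce non-triviality, and here the key point is that $\langle t+1,2\rangle$ is a \emph{proper} ideal of $\mathbb{Z}[t]$: the quotient $\mathbb{Z}[t]/\langle t+1,2\rangle\cong\mathbb{Z}/2\mathbb{Z}$ is nonzero, so $1\notin\langle t+1,2\rangle$. Since $A_4(S_4^{\mathbf{r}},t)$ sits inside this proper ideal, it cannot contain $1$ either, whence $A_4(S_4^{\mathbf{r}},t)\neq\langle1\rangle$ and the fourth characteristic ideal is not trivial. The only laborious step is the Gr\"obner basis computation for the $16\times16$ matrix, but this is a finite, purely mechanical calculation delegated to a computer algebra system; the conceptual content, namely the reduction from arbitrary clique sizes to the uniform size-$4$ blow-up, is entirely supplied by Observation~\ref{obs: 4-th minors}, so I anticipate no genuine obstacle beyond confirming the stated basis $\langle t+1,2\rangle$.
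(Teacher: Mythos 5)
Your proposal is correct and takes essentially the same route as the paper: both use Observation~\ref{obs: 4-th minors} to reduce everything to the single Gr\"obner-basis computation $A_4\bigl(S_4^{(-4,-4,-4,-4)},t\bigr)=\langle t+1,2\rangle$ and then deduce the containment $A_4(S_4^{\mathbf{r}},t)\subseteq\langle t+1,2\rangle$. Your added remark that $\mathbb{Z}[t]/\langle t+1,2\rangle\cong\mathbb{Z}/2\mathbb{Z}$, hence $1\notin\langle t+1,2\rangle$, simply makes explicit the non-triviality step that the paper leaves implicit.
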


\begin{theorem}
A connected graph $G$ is in $\mathcal{C}_{\leq 3}$ if and only if it is an induced subgraph of one of the following:
\begin{enumerate}
\item[\textup{(1)}] $C_5$,
\item[\textup{(2)}] the triangular prism $K_3 \Box K_2$,
\item[\textup{(3)}] a complete $4$-partite graph,
\item[\textup{(4)}]  $C_4^{\mathbf{r}}$, for some ${\bf r}$ such that $-\mathbf{r}\in \mathbb{N}^4$, or
\item[\textup{(5)}] $S_4^{\mathbf{r}}$, for some ${\bf r}$ such that $-\mathbf{r}\in \mathbb{N}^4$.
\end{enumerate}
\end{theorem}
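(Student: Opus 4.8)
The plan is to assemble this characterization directly from the structural result of Theorem~\ref{teo:charC_leq3.1} together with the ideal computations in Lemmas~\ref{lemma:gb_C5_K3K2}, \ref{lemma:gb_Cr4}, and \ref{lemma:gb_Sr4}. The starting observation is that, since the determinantal ideals form the descending chain \eqref{eqn:idealschain}, the trivial characteristic ideals of $G$ are exactly a prefix $A_1(G,t),\dots,A_{\gamma_A(G)}(G,t)$; hence $G\in\mathcal{C}_{\leq 3}$ if and only if $\gamma_A(G)\leq 3$, equivalently if and only if $A_4(G,t)$ is non-trivial. So the entire theorem reduces to controlling the fourth characteristic ideal.

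For the forward direction I would simply chain two earlier results: if $G\in\mathcal{C}_{\leq 3}$, then Lemma~\ref{lemma:GinC_leq3isFfree} shows $G$ is $\mathcal{F}$-free, and Theorem~\ref{teo:charC_leq3.1} then places $G$ as an induced subgraph of one of the five listed graphs. This direction requires no additional argument.

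For the reverse direction I would show that each of the five containers has proper fourth characteristic ideal and then push this down to induced subgraphs. The key point is the direction of the inclusion in Lemma~\ref{lemma:CharofInducedsubgraphs}: if $G$ is an induced subgraph of $\hat{G}$, then $A_4(G,t)\subseteq A_4(\hat{G},t)$, so whenever $A_4(\hat{G},t)$ is contained in a \emph{proper} ideal of $\mathbb{Z}[t]$, the same proper ideal contains $A_4(G,t)$, forcing $A_4(G,t)$ to be non-trivial. (If $G$ has fewer than four vertices the claim is immediate, since then $A_4(G,t)=\langle 0\rangle$.) For $C_5$ and $K_3\Box K_2$ this is exactly Lemma~\ref{lemma:gb_C5_K3K2}, giving $A_4(C_5,t)=\langle t^2+t-1\rangle$ and $A_4(K_3\Box K_2,t)=\langle t+2,5\rangle$; for $C_4^{\mathbf{r}}$ and $S_4^{\mathbf{r}}$, Lemmas~\ref{lemma:gb_Cr4} and \ref{lemma:gb_Sr4} give the proper containments $A_4\subseteq\langle t+1,3\rangle$ and $A_4\subseteq\langle t+1,2\rangle$. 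Each such ideal is seen to be proper by evaluating a putative expression for $1$ at the appropriate value of $t$ (namely $t=-1$, $t=-2$, or $t=0$) and reducing modulo the constant generator.

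The only container without a dedicated lemma is the complete $4$-partite graph, where I would reduce to Lemma~\ref{lemma:CharCompleteMultipartite}. Any complete $4$-partite graph $K_{r_1,r_2,r_3,r_4}$ is an induced subgraph of $K_{r_1',r_2',r_3',r_4'}$ with $r_i'=\max(r_i,2)$, which has all four parts of size at least $2$; applying Lemma~\ref{lemma:CharCompleteMultipartite} with $m=4$ and $j=4$ (valid since all parts $\geq 2$ gives $n\geq 8$, hence $4\leq n-4$) yields $A_4=\langle 3,t\rangle$, a proper ideal. Monotonicity then gives $A_4\subseteq\langle 3,t\rangle$ for every complete $4$-partite graph, and hence for every induced subgraph of one. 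I do not anticipate a genuine obstacle: all the substance lives in the earlier structural theorem and the Gr\"obner-basis computations, and the remaining care is exactly (i) orienting the inclusion of Lemma~\ref{lemma:CharofInducedsubgraphs} correctly and (ii) handling complete $4$-partite graphs with singleton parts by fattening them to size two before invoking the multipartite formula.
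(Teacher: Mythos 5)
Your proposal is correct and follows essentially the same route as the paper: the forward direction is exactly the chaining of Lemma~\ref{lemma:GinC_leq3isFfree} with Theorem~\ref{teo:charC_leq3.1}, and the reverse direction combines Lemmas~\ref{lemma:CharCompleteMultipartite}, \ref{lemma:gb_C5_K3K2}, \ref{lemma:gb_Cr4}, and \ref{lemma:gb_Sr4} with the monotonicity in Lemma~\ref{lemma:CharofInducedsubgraphs}, just as the paper does. Your additional care --- orienting the inclusion explicitly, checking properness of the ideals by evaluation, and fattening singleton parts of a complete $4$-partite graph to size two so that Lemma~\ref{lemma:CharCompleteMultipartite} applies --- only tightens details the paper leaves implicit.
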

\begin{proof}
$\Rightarrow)$ This follows from Lemma~\ref{lemma:GinC_leq3isFfree} and Theorem~\ref{teo:charC_leq3.1}.

$\Leftarrow)$ From Lemmas \ref{lemma:CharCompleteMultipartite}, \ref{lemma:gb_C5_K3K2},  \ref{lemma:gb_Cr4} and \ref{lemma:gb_Sr4}, we have that the $4${\it -th} characteristic ideals of the graphs $C_5$, $K_3 \Box K_2$, complete $4$-partite graphs, $C^{\bf r}_4$ and $S^{\bf r}_4$ are not trivial.
Then, by Lemma \ref{lemma:CharofInducedsubgraphs}, the $4${\it -th} characteristic ideal of any induced subgraph of these graphs is non-trivial.
\end{proof}

Now, we give the characterization of the regular graphs whose critical group has at most 3 invariant factors equal to 1.

\begin{corollary}
    Let $G$ be a connected simple regular graph. Then $G\in \mathcal{K}_{\leq 3}$ if and only if $G$ is one of the following:
    \begin{itemize}
        \item[(a)] $C_5$,
        \item[(b)] $K_3 \Box K_2$, 
        \item[(c)] a complete graph $K_r$,
        \item[(d)] a regular complete bipartite graph $K_{r,r}$,
        \item[(e)] a regular complete tripartite graph $K_{r,r,r}$,
        \item[(f)] a regular complete graph $4$-partite graph $K_{r,r,r,r}$,
        \item[(g)] $C_4^{(-r,-r,-r,-r)}$, for any $r\in \mathbb{N}$.
    \end{itemize}
\end{corollary}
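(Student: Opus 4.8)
The plan is to leverage the preceding theorem characterizing connected graphs in $\mathcal{C}_{\leq 3}$ as exactly the induced subgraphs of the five families listed, and then apply the regularity hypothesis together with the inclusion $\mathcal{K}_{\leq 3}\subseteq \mathcal{C}_{\leq 3}$ for regular graphs (established via Proposition~\ref{prop:characterisitidealsinvariantfactors}). For the forward direction, a regular graph $G\in\mathcal{K}_{\leq 3}$ satisfies $\gamma_A(G)\leq\phi(L(G))\leq 3$, so $G\in\mathcal{C}_{\leq 3}$; thus $G$ is a regular induced subgraph of one of the five ambient graphs. First I would go through each ambient family and determine which \emph{regular} connected induced subgraphs arise. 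The graphs $C_5$ and $K_3\Box K_2$ are themselves regular. Induced subgraphs of complete $4$-partite graphs that are regular and connected are precisely the complete $m$-partite graphs with all parts of equal size for $m\in\{1,2,3,4\}$, giving items (c)--(f). For the $C_4^{\mathbf r}$ family, regularity forces all four clique-blocks to have equal size, yielding $C_4^{(-r,-r,-r,-r)}$ in item (g). The $S_4^{\mathbf r}$ family contributes nothing new: the star underlying graph has a vertex of degree $3$ and three of degree $1$, so any blow-up is regular only in degenerate cases already covered by the complete multipartite or $C_4^{\mathbf r}$ descriptions.

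For the reverse direction, I would verify that each listed graph is in fact regular, connected, and lies in $\mathcal{K}_{\leq 3}$. Regularity and connectedness are immediate by inspection. The membership $\phi(L(G))\leq 3$ I would establish not through $\gamma_A$ directly (since $\gamma_A\leq\phi(L)$ is only an inequality) but by computing the invariant factors of the Laplacian via the evaluation formula $A_k(G,r)=\langle\Delta_k(L(G))\rangle$ from Proposition~\ref{prop:characterisitidealsinvariantfactors}, where $r$ is the common degree. Concretely, I would evaluate the characteristic ideals $A_1,A_2,A_3,A_4$ at $t=r$: each of these graphs is a connected induced subgraph of one of the five ambient graphs, so by Lemmas~\ref{lemma:gb_C5_K3K2}, \ref{lemma:gb_Cr4}, \ref{lemma:gb_Sr4} and \ref{lemma:CharCompleteMultipartite}, the fourth characteristic ideal $A_4(G,t)$ is nontrivial, and evaluating at $t=r$ shows $\Delta_4(L(G))\neq 1$, so the fourth Laplacian invariant factor exceeds $1$, giving $\phi(L(G))\leq 3$.

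The one point requiring genuine care, rather than pure bookkeeping, is the classification step for the $C_4^{\mathbf r}$ and $S_4^{\mathbf r}$ families: I must confirm that imposing regularity on these blow-ups really does collapse the list to exactly item (g), with no additional sporadic regular graphs slipping through, and that the regular complete multipartite cases (c)--(f) are not inadvertently double-counted as induced subgraphs of the blow-up families. Here the degree sequence of the underlying graph $C_4$ (all vertices of degree $2$) versus $S_4$ (degrees $3,1,1,1$) is the deciding invariant: in $G^{\mathbf d}$ a vertex in block $V_u$ has degree determined by $|V_u|$ and the sizes of neighboring blocks, so setting all these degrees equal across blocks pins down the block sizes. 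The main obstacle is thus ensuring the case analysis over the five ambient families is genuinely exhaustive and mutually consistent, so that the seven items (a)--(g) form a complete and irredundant list of the regular members of $\mathcal{C}_{\leq 3}$.
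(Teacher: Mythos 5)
Your proposal is correct and takes essentially the same approach as the paper: the forward direction combines regularity with the characterization of $\mathcal{C}_{\leq 3}$ (your classification of the regular induced subgraphs of the five ambient families just makes explicit what the paper leaves implicit), and the reverse direction establishes $\phi(L(G))\leq 3$ by evaluating the fourth characteristic ideal at the common degree via Proposition~\ref{prop:characterisitidealsinvariantfactors} together with Lemmas~\ref{lemma:gb_C5_K3K2}, \ref{lemma:gb_Cr4} and \ref{lemma:CharCompleteMultipartite}. The only differences are minor: the paper disposes of items (c)--(e) by citing the earlier $\mathcal{K}_{\leq 2}$ corollary rather than re-evaluating, and for item (g) it separates $r\geq 4$ (where the Gr\"obner basis equality holds) from the small cases it computes explicitly, whereas your argument, which needs only the containment $A_4(C_4^{\mathbf{r}},t)\subseteq\langle t+1,3\rangle$ to force the evaluated ideal into $\langle 3\rangle$, treats all $r$ uniformly.
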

\begin{proof}
$\Rightarrow)$ This follows from the fact that $G$ is a regular graph in $\mathcal{C}_{\leq 3}$.

$\Leftarrow)$ It is clear from Lemma \ref{lemma:gb_C5_K3K2} that the fourth invariant factors of $C_5$ and $K_3 \Box K_2$ are different than $1$.
The graphs in (c), (d) and (e) are precisely the graphs in $\mathcal{K}_{\leq 2}\subset \mathcal{K}_{\leq 3}$. For (f), if $r=1$ we have the complete graph with four vertices. Therefore, we assume that $r\geq 2$. By Lemma \ref{lemma:CharCompleteMultipartite}, the fourth characteristic ideal of a $4$-partite regular complete graph is
$\langle 3,t \rangle$ and its third characteristic ideal is trivial. Then, evaluating at $t=3r$, the degree of any vertex, we have that the fourth invariant factor is $\gcd(3,3r)=3$ and therefore $K_{r,r,r,r}\in \mathcal{K}_{\leq 3}$. Finally, for (g), note that the degree of any vertex of $C_4^{(-r,-r,-r,-r)}$ is $3r-1$. 
By Lemma~\ref{lemma:gb_Cr4}, when $r\geq 4$ the fourth invariant factor is the $\gcd(3r,3)=3$.
Thus $C_4^{(-r,-r,-r,-r)}\in \mathcal{K}_{\geq3}$ when $r\geq4$.
The lower cases can be explicitly computed to verify that $C_4^{(-r,-r,-r,-r)}\in \mathcal{K}_{\leq3}$. 
\end{proof}

\section{Graphs whose Smith group has at most 4 invariant factors equal to 1}\label{sec:smithgroup}

In this section we give the characterizations of the graph families $\mathcal{S}_{\leq k}$ for $k\in\{1,2,3\}$.
And for $k=4$, we give a set of 43 minimal forbidden graphs for $\mathcal{S}_{\leq 4}$.


We have that $\mathcal{S}_{\leq k}$ is closed under induced subgraphs.
This observation follows from next proposition.

\begin{proposition}\label{prop:betainducedsubgraph}
  If $H$ is an induced subgraph of $G$, then $\phi(A(H))\leq\phi(A(G))$.
\end{proposition}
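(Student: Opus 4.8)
The plan is to reduce the statement about invariant factors to a statement about greatest common divisors of minors, and then exploit the fact that $A(H)$ sits inside $A(G)$ as a principal submatrix. Write $\Delta_k(M)$ for the gcd of the $k$-minors of an integer matrix $M$, with $\Delta_0(M)=1$. By the Theorem of elementary divisors (Theorem~\ref{theo:elemdivisors}), the $k$-th invariant factor is $d_k(M)=\Delta_k(M)/\Delta_{k-1}(M)$; since the invariant factors are positive integers with $d_1\mid d_2\mid\cdots$, one has $d_1=\cdots=d_k=1$ precisely when $\Delta_k(M)=1$. Consequently
\[
\phi(M)=\max\{k : \Delta_k(M)=1\}.
\]
So it suffices to show that $\Delta_k(A(H))=1$ implies $\Delta_k(A(G))=1$ for every $k$.

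First I would record the structural input: because $H$ is an induced subgraph of $G$, its adjacency matrix $A(H)$ is a principal submatrix of $A(G)$, namely $A(G)$ restricted to the rows and columns indexed by $V(H)$. Hence every $k$-minor of $A(H)$ is also a $k$-minor of $A(G)$, i.e. $\minors_k(A(H))\subseteq \minors_k(A(G))$. Taking gcd's and using that the gcd of a larger family of integers divides the gcd of any subfamily, I obtain
\[
\Delta_k(A(G)) \mid \Delta_k(A(H)) \qquad \text{for all } k.
\]

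Finally I would assemble the conclusion. If $\Delta_k(A(H))=1$, then the divisibility above forces $\Delta_k(A(G))\mid 1$, hence $\Delta_k(A(G))=1$. Therefore $\{k:\Delta_k(A(H))=1\}\subseteq\{k:\Delta_k(A(G))=1\}$, and taking maxima of these sets (both nonempty, since $k=0$ belongs to each) gives $\phi(A(H))\le \phi(A(G))$, as desired.

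The argument is short, and the only point requiring care — really the sole potential pitfall — is the direction of divisibility: enlarging the family of minors can only shrink, in the divisibility order, their gcd, so it is $\Delta_k(A(G))$ that divides $\Delta_k(A(H))$ and not the reverse. I would also note that the same conclusion can be reached through the paper's ideal-theoretic machinery: evaluating the containment $A_k(H,t)\subseteq A_k(G,t)$ of Lemma~\ref{lemma:CharofInducedsubgraphs} at $t=0$ and invoking Proposition~\ref{prop:characterisitidealsinvariantfactors} yields $\langle\Delta_k(A(H))\rangle\subseteq\langle\Delta_k(A(G))\rangle$ in $\mathbb{Z}$, which is exactly the divisibility $\Delta_k(A(G))\mid\Delta_k(A(H))$ used above.
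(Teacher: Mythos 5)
Your proof is correct and follows essentially the same route as the paper: the paper's own argument is precisely that the $k$-minors of $A(H)$ form a subset of the $k$-minors of $A(G)$ (since $A(H)$ is a principal submatrix), hence $\Delta_k(A(H))=1$ forces $\Delta_k(A(G))=1$. You simply make explicit two steps the paper leaves implicit --- the identity $\phi(M)=\max\{k:\Delta_k(M)=1\}$ and the divisibility $\Delta_k(A(G))\mid\Delta_k(A(H))$ --- which is a welcome clarification but not a different argument.
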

\begin{proof}
Let $H$ be an induced subgraph of $G$.
For any $k$ such that $1\leq k\leq |V(H)|$, the $k$-minors of $A(H)$ are contained in the $k$-minors of $A(G)$.
Therefore, if $\Delta_k(A(H))=1$, then $\Delta_k(A(G))=1$.
\end{proof}

Given a family $\mathcal F$ of graphs, a graph $G$ is called $\mathcal F$-free if no induced subgraph of $G$ is isomorphic to a member of $\mathcal F$.
We can define a graph $G$ to be \textit{forbidden} for $\mathcal{S}_{\leq k}$ when $\phi(A(G))\geq k+1$.
Let ${\bf Forb}(\mathcal{S}_{\leq k})$ denote the set of minimal forbidden graphs for $\mathcal{S}_{\leq k}$ with respect to the induced subgraph order.
Thus $G\in\mathcal{S}_{\leq k}$ if and only if $G$ is ${\bf Forb}(\mathcal{S}_{\leq 2})$-free.
Therefore, characterizing the minimal forbidden induced subgraphs for $\mathcal S_{\leq k}$ leads to a characterization of $\mathcal{S}_{\leq k}$.
For instance, let $P_2$ denote the path with 2 vertices.
We have that the Smith normal form of the adjacency matrix of $P_2$ has 2 invariant factors equal to 1.
Then, $\mathcal{S}_{\leq 1}$ consists only of $K_1$, and there is no graph $G$ with $\phi(A(G))=1$.

\begin{figure}[h!]
\begin{center}
    \begin{tikzpicture}[scale=1,thick]
    \tikzstyle{every node}=[minimum width=0pt, inner sep=2pt, circle]
        \draw (0:1) node[draw] (0) {};
        \draw (0:0) node[draw] (1) {};
        \draw (150:1) node[draw] (2) {};
        \draw (210:1) node[draw] (3) {};
        \draw  (0) -- (1) -- (2) -- (3) -- (1);
    \end{tikzpicture}
\end{center}
\caption{{\sf paw} graph}
\label{figure:paw}
\end{figure}
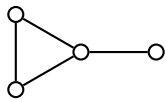

Now, we are going to give an alternative proof of the characterization $\mathcal{S}_{\leq 2}$.
For this, next result gives the SNF for complete $k$-partite graphs.
A particular case of Lemma~\ref{lemma:CharCompleteMultipartite} is the following lemma, which also was noticed in \cite{BK}.

\begin{lemma}\label{lemma:SNFcompletekpartite}
  Let $G$ be a complete $k$-partite graph with $n$ vertices.
  Then the Smith normal form of $A(G)$ is equal to $I_{k-1}\oplus (k-1)\oplus 0I_{n-k}$.
\end{lemma}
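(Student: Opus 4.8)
The plan is to compute the Smith normal form directly, exploiting the fact that vertices lying in the same part of a complete $k$-partite graph are indistinguishable from the point of view of the adjacency matrix. Concretely, if $u$ and $v$ belong to the same part, then both are non-adjacent to every vertex of that part (including themselves, as there are no loops) and adjacent to every vertex outside it; hence the rows of $A(G)$ indexed by $u$ and $v$ are identical, and by symmetry so are the corresponding columns.

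First I would use this observation to reduce $A(G)$ by integer row and column operations. Choose one representative vertex in each part. For every non-representative $v$, subtract the row of its part's representative from the row of $v$; since these rows agree, this produces $n-k$ zero rows. Performing the analogous column subtractions (legitimate since columns also agree within a part, and the zero rows stay zero under them) clears the corresponding $n-k$ columns. What remains, after a permutation gathering the representatives, is the $k\times k$ adjacency matrix $A(K_k)=J_k-I_k$ of the complete graph on the $k$ parts, together with an $(n-k)\times(n-k)$ zero block. As all of these operations are invertible over $\mathbb{Z}$, they preserve the Smith normal form, so $A(G)\sim A(K_k)\oplus 0I_{n-k}$.

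It then remains to compute the Smith normal form of $A(K_k)$. I would obtain this from Lemma~\ref{lemma:CharCompletegraphs} together with Proposition~\ref{prop:characterisitidealsinvariantfactors}: evaluating the characteristic ideals of $K_k$ at $t=0$ gives $\Delta_j(A(K_k))=1$ for $1\le j\le k-1$ and $\Delta_k(A(K_k))=k-1$, so the invariant factors of $A(K_k)$ are $1,\dots,1,k-1$, i.e.\ $\mathrm{SNF}(A(K_k))=I_{k-1}\oplus(k-1)$. (Alternatively this is elementary and self-contained: $\det A(K_k)=\pm(k-1)$, while deleting the last row and the first column of $J_k-I_k$ and then subtracting the first remaining row from the others exhibits a $(k-1)$-minor equal to $\pm1$, forcing $\Delta_{k-1}=1$.) Combining with the previous paragraph yields $\mathrm{SNF}(A(G))=I_{k-1}\oplus(k-1)\oplus 0I_{n-k}$, as claimed. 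I would note that this argument needs no lower bound on the part sizes, so it covers parts of size $1$ as well, unlike a direct appeal to Lemma~\ref{lemma:CharCompleteMultipartite}.

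The main thing to get right is the bookkeeping of the reduction step: one must verify that after the row subtractions the representative rows still have equal entries across each block of columns, so that the column subtractions genuinely annihilate the non-representative columns rather than merely simplifying them. This follows because the $(r_i,v)$ entry of $A(G)$ depends only on the parts containing $r_i$ and $v$; I expect no real difficulty here, only the need to state it cleanly.
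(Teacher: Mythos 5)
Your proof is correct, but it follows a genuinely different route from the paper. The paper gives no self-contained argument here: it presents the lemma as a particular case of Lemma~\ref{lemma:CharCompleteMultipartite} (Gao's computation of the characteristic ideals of complete multipartite graphs), read off at $t=0$ via Proposition~\ref{prop:characterisitidealsinvariantfactors}, and otherwise points to the reference \cite{BK}. You instead reduce $A(G)$ directly: identical rows/columns within each part let you zero out all non-representative rows and columns, leaving $A(K_k)\oplus 0I_{n-k}$, and then you compute $\mathrm{SNF}(A(K_k))=I_{k-1}\oplus(k-1)$ either from Lemma~\ref{lemma:CharCompletegraphs} or by exhibiting a unimodular $(k-1)$-minor together with $\det A(K_k)=\pm(k-1)$. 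Your reduction step is essentially the vertex-duplication argument the paper proves later (the lemma that the non-zero invariant factors of $A(G)$ and $A(G^{\mathbf d})$ coincide, since a complete $k$-partite graph is $K_k^{\mathbf d}$), so your approach is very much in the spirit of the paper's Section~\ref{sec:smithgroup}, just deployed earlier. What your route buys is worth noting: Lemma~\ref{lemma:CharCompleteMultipartite} is stated only for parts of size at least $2$, so the paper's ``particular case'' claim does not literally cover parts of size $1$ (that gap is absorbed by the citation to \cite{BK}), whereas your argument is elementary, self-contained, and uniform over all part sizes. What the paper's route buys is brevity, given that the machinery of characteristic ideals of multipartite graphs is already in place and is needed elsewhere. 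Your bookkeeping concern at the end is resolved exactly as you suspect: the representative rows are untouched by the row operations, so their entries in two columns from the same part remain equal, and the column subtractions annihilate the non-representative columns.
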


Now we are ready to give the characterization of graphs whose Smith group have at most 2 invariant factors.

\begin{theorem}\label{teo:graphsSmithgroup2if}
  Let $G$ be connected graph.
  Then the followings are equivalent.
  \begin{enumerate}
    \item the SNF of $A(G)$ has at most 2 invariant factors equal to 1,
    \item $G$ is $\{P_4,{\sf paw},K_4\}$-free,
    \item $G$ is an induced subgraph of a complete tripartite graph.
  \end{enumerate}
\end{theorem}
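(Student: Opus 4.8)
The plan is to prove the three-way equivalence by establishing $(1)\Rightarrow(2)\Rightarrow(3)\Rightarrow(1)$, exactly paralleling the structure already used for Theorem~\ref{theo:familyK2}, but now working directly with the Smith normal form via the quantity $\phi(A(G))$ rather than with characteristic ideals. The monotonicity tool here is Proposition~\ref{prop:betainducedsubgraph}, which plays the role that Lemma~\ref{lemma:CharofInducedsubgraphs} played before: if $H$ is an induced subgraph of $G$ then $\phi(A(H))\le\phi(A(G))$, so a forbidden subgraph argument is legitimate.

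For $(1)\Rightarrow(2)$ I would exhibit, for each of $P_4$, ${\sf paw}$, and $K_4$, an explicit adjacency matrix and check that its Smith normal form has at least three invariant factors equal to $1$. Concretely, $A(P_4)$ has rank $4$ and SNF $\diag(1,1,1,4)$ (or one can simply observe $\det A(P_4)=1$ forces all four invariant factors to be $1$, so $\phi=4$); $A({\sf paw})$ and $A(K_4)$ each likewise yield $\phi\ge 3$ by a short integer row/column reduction. Since any graph containing one of these as an induced subgraph would inherit $\phi\ge 3$ by Proposition~\ref{prop:betainducedsubgraph}, a graph in $\mathcal{S}_{\leq 2}$ must be $\{P_4,{\sf paw},K_4\}$-free. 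This step is routine once the three small SNF computations are done.

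For $(2)\Rightarrow(3)$ the argument is purely structural and reuses the lemmas already cited. Since $G$ is ${\sf paw}$-free and connected, Lemma~\ref{lemma:pawfree} gives that $G$ is either $K_3$-free or a complete multipartite graph. If $G$ is $K_3$-free, then being also $P_4$-free it is complete bipartite by Lemma~\ref{lemma:P4pawfree}, hence an induced subgraph of a complete tripartite graph. If instead $G$ is complete multipartite, the $K_4$-freeness forces $G$ to have at most three parts: four parts would already contain $K_4$. Either way $G$ is (an induced subgraph of) a complete tripartite graph.

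For $(3)\Rightarrow(1)$ I would invoke Lemma~\ref{lemma:SNFcompletekpartite} with $k=3$: a complete tripartite graph on $n$ vertices has SNF $I_{2}\oplus (2)\oplus 0I_{n-3}$, so exactly two invariant factors equal $1$, giving $\phi=2$. Then Proposition~\ref{prop:betainducedsubgraph} propagates $\phi\le 2$ down to every induced subgraph, completing the cycle. \textbf{The only delicate point} is the direction of the inequality in Proposition~\ref{prop:betainducedsubgraph}: the induced-subgraph bound goes the convenient way for $(3)\Rightarrow(1)$ and for the forbidden-subgraph direction $(1)\Rightarrow(2)$, so no subtlety of the kind that breaks critical-group monotonicity arises here. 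The main obstacle, such as it is, is simply verifying the three base-case SNF computations in $(1)\Rightarrow(2)$ cleanly and confirming that the $k=3$ specialization of Lemma~\ref{lemma:SNFcompletekpartite} indeed yields exactly two trivial invariant factors rather than fewer.
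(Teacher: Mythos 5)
Your proposal is correct and follows essentially the same route as the paper's proof: the same cycle $(1)\Rightarrow(2)\Rightarrow(3)\Rightarrow(1)$, the same small SNF computations for $P_4$, ${\sf paw}$ and $K_4$ combined with Proposition~\ref{prop:betainducedsubgraph}, the same use of Lemmas~\ref{lemma:pawfree} and \ref{lemma:P4pawfree} for the structural step, and Lemma~\ref{lemma:SNFcompletekpartite} for the final implication. One small slip: the SNF of $A(P_4)$ is $\diag(1,1,1,1)$, not $\diag(1,1,1,4)$ --- your own parenthetical remark (that $\det A(P_4)=\pm1$ forces all four invariant factors to equal $1$) is the correct statement and is what the paper records.
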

\begin{proof}
$(1)\implies (2)$ The SNF of the adjacency matrices of $P_4$, {\sf paw}, $K_4$ are equal to $\diag(1,1,1,1)$, $\diag(1,1,1,1)$ and $\diag(1,1,1,3)$, respectively.
Since any induced subgraph $H$ of $P_4$, {\sf paw}, or $K_4$ has $\phi(A(H))\leq2$, then $\{P_4,{\sf paw},K_4\}\subseteq{\bf Forb}(\mathcal{S}_{\leq 2})$.

$(2)\implies (3)$ By Lemma~\ref{lemma:pawfree}, $G$ is either triangle free or a complete multipartite graph.
In the first case by Lemma~\ref{lemma:P4pawfree}, $G$ is a complete bipartite graph.
And in the second case since $G$ is $K_4$-free, then $G$ is complete tripartite graph.

$(3)\implies (1)$ It follows by Lemma~\ref{lemma:SNFcompletekpartite} that the SNF of $A(G)$  is at most 2.
\end{proof}

An analogous reasoning give us the characterization of $\mathcal{S}_{\leq 3}$.

\begin{theorem}\label{teo:graphsSmithgroup3if}
  Let $G$ be connected graph.
  Then the followings are equivalent.
  \begin{enumerate}
    \item the SNF of $A(G)$ has at most 3 invariant factors equal to 1
    \item $G$ is $\{P_4,{\sf paw},K_5\}$-free
    \item $G$ is an induced subgraph of a complete four-partite graph
  \end{enumerate}
\end{theorem}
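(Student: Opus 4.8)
The plan is to follow the template of the proof of Theorem~\ref{teo:graphsSmithgroup2if}, establishing the cycle $(1)\implies(2)\implies(3)\implies(1)$, with the forbidden clique promoted from $K_4$ to $K_5$ and the number of admissible parts raised from three to four. For $(1)\implies(2)$, I would verify that none of $P_4$, ${\sf paw}$, $K_5$ lies in $\mathcal{S}_{\leq 3}$. The adjacency matrices of $P_4$ and ${\sf paw}$ already have Smith normal form $\diag(1,1,1,1)$ (as recorded in the proof of Theorem~\ref{teo:graphsSmithgroup2if}), so $\phi(A(P_4))=\phi(A({\sf paw}))=4>3$. For $K_5$, viewing it as the complete $5$-partite graph with singleton parts, Lemma~\ref{lemma:SNFcompletekpartite} gives Smith normal form $I_4\oplus(4)$, whence $\phi(A(K_5))=4>3$. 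Since $\mathcal{S}_{\leq 3}$ is closed under induced subgraphs (Proposition~\ref{prop:betainducedsubgraph}), a graph $G$ containing any of these three as an induced subgraph would satisfy $\phi(A(G))\geq 4$; hence every $G$ with $\phi(A(G))\leq 3$ is $\{P_4,{\sf paw},K_5\}$-free.

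For $(2)\implies(3)$, I would invoke Lemma~\ref{lemma:pawfree}: a ${\sf paw}$-free connected graph is either $K_3$-free or complete multipartite. In the first case the additional $P_4$-freeness lets Lemma~\ref{lemma:P4pawfree} conclude that $G$ is complete bipartite, which is in particular an induced subgraph of a complete four-partite graph. In the second case, a complete multipartite graph with five or more parts contains $K_5$ (take one vertex from each of five parts), so $K_5$-freeness forces at most four parts; thus $G$ is complete $k$-partite with $k\leq 4$, again an induced subgraph of a complete four-partite graph.

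For $(3)\implies(1)$, Lemma~\ref{lemma:SNFcompletekpartite} shows that a complete $k$-partite graph has $\phi=k-1$, so a complete four-partite graph has $\phi=3$; by the monotonicity of $\phi$ under induced subgraphs (Proposition~\ref{prop:betainducedsubgraph}), any induced subgraph satisfies $\phi(A(G))\leq 3$. I expect no serious obstacle: the argument is a calibrated shift of the $k=2$ case. The only point needing care is the single computation $\phi(A(K_5))=4$, which pins down why $K_4$ (with $\phi=3$) is now admissible while $K_5$ is forbidden, and correspondingly why the part-count threshold moves from three to four; all of this is supplied uniformly by Lemma~\ref{lemma:SNFcompletekpartite}.
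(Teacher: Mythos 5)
Your proposal is correct and follows essentially the same route as the paper's proof: the cycle $(1)\implies(2)\implies(3)\implies(1)$ using the Smith normal forms of $P_4$, ${\sf paw}$, $K_5$, then Lemmas~\ref{lemma:pawfree} and \ref{lemma:P4pawfree} with $K_5$-freeness bounding the number of parts, and finally Lemma~\ref{lemma:SNFcompletekpartite} with monotonicity under induced subgraphs. If anything, you are slightly more careful than the paper, which in step $(2)\implies(3)$ misstates the conclusion as ``complete tripartite'' where four-partite is meant, and which leaves the induced-subgraph monotonicity in $(3)\implies(1)$ implicit.
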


\begin{proof}
$(1)\implies (2)$ The SNF of the adjacency matrices of $P_4$, {\sf paw}, $K_5$ are equal to $\diag(1,1,1,1)$, $\diag(1,1,1,1)$ and $\diag(1,1,1,1,4)$, respectively.
Since any induced subgraph $H$ of $P_4$, {\sf paw}, or $K_5$ has $\phi(A(H))\leq3$, then $\{P_4,{\sf paw},K_5\}\subseteq{\bf Forb}(\mathcal{S}_{\leq 3})$.

$(2)\implies (3)$ Since $G$ is {\sf paw}-free, then by Lemma~\ref{lemma:pawfree}, $G$ is either triangle-free or a complete multipartite graph.
Thus, in the first case, $G$ is also $K_3$-free, by Lemma~\ref{lemma:P4pawfree}, $G$ is a complete bipartite graph.
In the second case, since $G$ is $K_5$-free, then $G$ is complete tripartite graph.

$(3)\implies (1)$ It follows by Lemma~\ref{lemma:SNFcompletekpartite} that the SNF of $A(G)$  is at most 3.
\end{proof}

The next case is more complicated.
With the use of SAGE \cite{sage}, we found that there are 43 forbidden graphs for $\mathcal{S}_{\leq4}$, see Figure \ref{fig:forb4}.
The following SAGE code computes the minimal forbidden graphs with at most $m$ vertices for $\mathcal{S}_{\leq n}$.

\begin{lstlisting}
import numpy as np

def MinForb(m,n):
	Forbidden = []
	for k in range(2,m):
		for g in graphs(k):
			if g.is_connected():
				SNF = g.adjacency_matrix().smith_form()[0].numpy().diagonal()
				num_ones = list(SNF).count(1)
				if num_ones >= n+1:
					Forbidden.append([g.graph6_string(),num_ones])

	Minimal = []
	for g in range(0,len(Forbidden)):
		flag = True
		for h in Minimal:
			if Graph(Forbidden[g][0]).subgraph_search(Graph(h),induced=True) != None:
				flag = False
				break
		if flag == True:
			Minimal.append(Forbidden[g][0])
	return Minimal
\end{lstlisting}


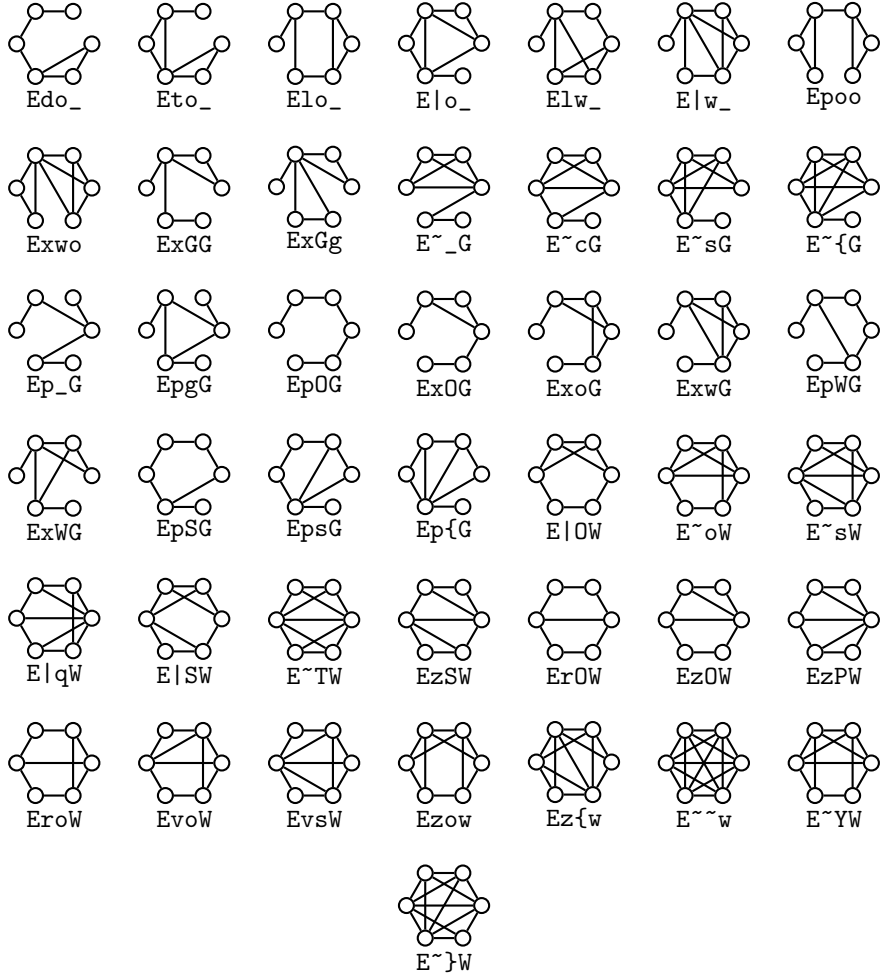
\begin{figure}[h!]

\begin{center}

\begin{tabular}{c@{\extracolsep{.5cm}}c@{\extracolsep{.5cm}}c@{\extracolsep{.5cm}}c@{\extracolsep{.5cm}}c@{\extracolsep{.5cm}}c@{\extracolsep{.5cm}}c}
    \begin{tikzpicture}[scale=.5,thick]
    \tikzstyle{every node}=[minimum width=0pt, inner sep=2pt, circle]
        \draw (0:1) node[draw] (4) { };
        \draw (60:1) node[draw] (5) { };
        \draw (120:1) node[draw] (2) { };
        \draw (180:1) node[draw] (3) { };
        \draw (240:1) node[draw] (0) { };
        \draw (300:1) node[draw] (1) { };
        \draw  (0) edge (1);
        \draw  (0) edge (3);
        \draw  (0) edge (4);
        \draw  (1) edge (4);
        \draw  (2) edge (3);
        \draw  (2) edge (5);
    \draw (0,-1.5) node () { \text{\texttt{Edo{\char`\_}}} };
    \end{tikzpicture}
&
    \begin{tikzpicture}[scale=.5,thick]
    \tikzstyle{every node}=[minimum width=0pt, inner sep=2pt, circle]
        \draw (0:1) node[draw] (4) { };
        \draw (60:1) node[draw] (5) { };
        \draw (120:1) node[draw] (2) { };
        \draw (180:1) node[draw] (3) { };
        \draw (240:1) node[draw] (0) { };
        \draw (300:1) node[draw] (1) { };
        \draw  (0) edge (1);
        \draw  (0) edge (2);
        \draw  (0) edge (3);
        \draw  (0) edge (4);
        \draw  (1) edge (4);
        \draw  (2) edge (3);
        \draw  (2) edge (5);
    \draw (0,-1.5) node () { \text{\texttt{Eto{\char`\_}}} };
    \end{tikzpicture}
&
    \begin{tikzpicture}[scale=.5,thick]
    \tikzstyle{every node}=[minimum width=0pt, inner sep=2pt, circle]
        \draw (0:1) node[draw] (4) { };
        \draw (60:1) node[draw] (1) { };
        \draw (120:1) node[draw] (2) { };
        \draw (180:1) node[draw] (5) { };
        \draw (240:1) node[draw] (3) { };
        \draw (300:1) node[draw] (0) { };
        \draw  (0) edge (1);
        \draw  (0) edge (3);
        \draw  (0) edge (4);
        \draw  (1) edge (2);
        \draw  (1) edge (4);
        \draw  (2) edge (3);
        \draw  (2) edge (5);
    \draw (0,-1.5) node () { \text{\texttt{Elo{\char`\_}}} };
    \end{tikzpicture}
&
    \begin{tikzpicture}[scale=.5,thick]
    \tikzstyle{every node}=[minimum width=0pt, inner sep=2pt, circle]
        \draw (0:1) node[draw] (1) { };
        \draw (60:1) node[draw] (4) { };
        \draw (120:1) node[draw] (0) { };
        \draw (180:1) node[draw] (3) { };
        \draw (240:1) node[draw] (2) { };
        \draw (300:1) node[draw] (5) { };
        \draw  (0) edge (1);
        \draw  (0) edge (2);
        \draw  (0) edge (3);
        \draw  (0) edge (4);
        \draw  (1) edge (2);
        \draw  (1) edge (4);
        \draw  (2) edge (3);
        \draw  (2) edge (5);
    \draw (0,-1.5) node () { \text{\texttt{E|o{\char`\_}}} };
    \end{tikzpicture}
&
    \begin{tikzpicture}[scale=.5,thick]
    \tikzstyle{every node}=[minimum width=0pt, inner sep=2pt, circle]
        \draw (0:1) node[draw] (0) { };
        \draw (60:1) node[draw] (3) { };
        \draw (120:1) node[draw] (2) { };
        \draw (180:1) node[draw] (5) { };
        \draw (240:1) node[draw] (1) { };
        \draw (300:1) node[draw] (4) { };
        \draw  (0) edge (1);
        \draw  (0) edge (3);
        \draw  (0) edge (4);
        \draw  (1) edge (2);
        \draw  (1) edge (4);
        \draw  (2) edge (3);
        \draw  (2) edge (4);
        \draw  (2) edge (5);
    \draw (0,-1.5) node () { \text{\texttt{Elw{\char`\_}}} };
    \end{tikzpicture}
&
    \begin{tikzpicture}[scale=.5,thick]
    \tikzstyle{every node}=[minimum width=0pt, inner sep=2pt, circle]
        \draw (0:1) node[draw] (4) { };
        \draw (60:1) node[draw] (1) { };
        \draw (120:1) node[draw] (2) { };
        \draw (180:1) node[draw] (5) { };
        \draw (240:1) node[draw] (3) { };
        \draw (300:1) node[draw] (0) { };
        \draw  (0) edge (1);
        \draw  (0) edge (2);
        \draw  (0) edge (3);
        \draw  (0) edge (4);
        \draw  (1) edge (2);
        \draw  (1) edge (4);
        \draw  (2) edge (3);
        \draw  (2) edge (4);
        \draw  (2) edge (5);
    \draw (0,-1.5) node () { \text{\texttt{E|w{\char`\_}}} };
    \end{tikzpicture}
&
    \begin{tikzpicture}[scale=.5,thick]
    \tikzstyle{every node}=[minimum width=0pt, inner sep=2pt, circle]
        \draw (0:1) node[draw] (1) { };
        \draw (60:1) node[draw] (0) { };
        \draw (120:1) node[draw] (2) { };
        \draw (180:1) node[draw] (3) { };
        \draw (240:1) node[draw] (5) { };
        \draw (300:1) node[draw] (4) { };
        \draw  (0) edge (1);
        \draw  (0) edge (2);
        \draw  (0) edge (4);
        \draw  (1) edge (4);
        \draw  (2) edge (3);
        \draw  (2) edge (5);
        \draw  (3) edge (5);
    \draw (0,-1.5) node () { \text{\texttt{Epoo}} };
    \end{tikzpicture}
\\
    \begin{tikzpicture}[scale=.5,thick]
    \tikzstyle{every node}=[minimum width=0pt, inner sep=2pt, circle]
        \draw (0:1) node[draw] (0) { };
        \draw (60:1) node[draw] (1) { };
        \draw (120:1) node[draw] (2) { };
        \draw (180:1) node[draw] (3) { };
        \draw (240:1) node[draw] (5) { };
        \draw (300:1) node[draw] (4) { };
        \draw  (0) edge (1);
        \draw  (0) edge (2);
        \draw  (0) edge (4);
        \draw  (1) edge (2);
        \draw  (1) edge (4);
        \draw  (2) edge (3);
        \draw  (2) edge (4);
        \draw  (2) edge (5);
        \draw  (3) edge (5);
    \draw (0,-1.5) node () { \text{\texttt{Exwo}} };
    \end{tikzpicture}
&
    \begin{tikzpicture}[scale=.5,thick]
    \tikzstyle{every node}=[minimum width=0pt, inner sep=2pt, circle]
        \draw (0:1) node[draw] (0) { };
        \draw (60:1) node[draw] (1) { };
        \draw (120:1) node[draw] (2) { };
        \draw (180:1) node[draw] (3) { };
        \draw (240:1) node[draw] (4) { };
        \draw (300:1) node[draw] (5) { };
        \draw  (0) edge (1);
        \draw  (0) edge (2);
        \draw  (1) edge (2);
        \draw  (2) edge (3);
        \draw  (2) edge (4);
        \draw  (4) edge (5);
    \draw (0,-1.5) node () { \text{\texttt{ExGG}} };
    \end{tikzpicture}
&
    \begin{tikzpicture}[scale=.5,thick]
    \tikzstyle{every node}=[minimum width=0pt, inner sep=2pt, circle]
        \draw (0:1) node[draw] (0) { };
        \draw (60:1) node[draw] (1) { };
        \draw (120:1) node[draw] (2) { };
        \draw (180:1) node[draw] (3) { };
        \draw (240:1) node[draw] (4) { };
        \draw (300:1) node[draw] (5) { };
        \draw  (0) edge (1);
        \draw  (0) edge (2);
        \draw  (1) edge (2);
        \draw  (2) edge (3);
        \draw  (2) edge (4);
        \draw  (2) edge (5);
        \draw  (4) edge (5);
    \draw (0,-1.5) node () { \text{\texttt{ExGg}} };
    \end{tikzpicture}
&
    \begin{tikzpicture}[scale=.5,thick]
    \tikzstyle{every node}=[minimum width=0pt, inner sep=2pt, circle]
        \draw (0:1) node[draw] (0) { };
        \draw (60:1) node[draw] (1) { };
        \draw (120:1) node[draw] (2) { };
        \draw (180:1) node[draw] (3) { };
        \draw (240:1) node[draw] (4) { };
        \draw (300:1) node[draw] (5) { };
        \draw  (0) edge (1);
        \draw  (0) edge (2);
        \draw  (0) edge (3);
        \draw  (0) edge (4);
        \draw  (1) edge (2);
        \draw  (1) edge (3);
        \draw  (2) edge (3);
        \draw  (4) edge (5);
    \draw (0,-1.5) node () { \text{\texttt{E{\char`\~}{\char`\_}G}} };
    \end{tikzpicture}
&
    \begin{tikzpicture}[scale=.5,thick]
    \tikzstyle{every node}=[minimum width=0pt, inner sep=2pt, circle]
        \draw (0:1) node[draw] (0) { };
        \draw (60:1) node[draw] (1) { };
        \draw (120:1) node[draw] (2) { };
        \draw (180:1) node[draw] (3) { };
        \draw (240:1) node[draw] (4) { };
        \draw (300:1) node[draw] (5) { };
        \draw  (0) edge (1);
        \draw  (0) edge (2);
        \draw  (0) edge (3);
        \draw  (0) edge (4);
        \draw  (1) edge (2);
        \draw  (1) edge (3);
        \draw  (2) edge (3);
        \draw  (3) edge (4);
        \draw  (4) edge (5);
    \draw (0,-1.5) node () { \text{\texttt{E{\char`\~}cG}} };
    \end{tikzpicture}
&
    \begin{tikzpicture}[scale=.5,thick]
    \tikzstyle{every node}=[minimum width=0pt, inner sep=2pt, circle]
        \draw (0:1) node[draw] (2) { };
        \draw (60:1) node[draw] (1) { };
        \draw (120:1) node[draw] (0) { };
        \draw (180:1) node[draw] (3) { };
        \draw (240:1) node[draw] (4) { };
        \draw (300:1) node[draw] (5) { };
        \draw  (0) edge (1);
        \draw  (0) edge (2);
        \draw  (0) edge (3);
        \draw  (0) edge (4);
        \draw  (1) edge (2);
        \draw  (1) edge (3);
        \draw  (1) edge (4);
        \draw  (2) edge (3);
        \draw  (3) edge (4);
        \draw  (4) edge (5);
    \draw (0,-1.5) node () { \text{\texttt{E{\char`\~}sG}} };
    \end{tikzpicture}
&
    \begin{tikzpicture}[scale=.5,thick]
    \tikzstyle{every node}=[minimum width=0pt, inner sep=2pt, circle]
        \draw (0:1) node[draw] (0) { };
        \draw (60:1) node[draw] (1) { };
        \draw (120:1) node[draw] (2) { };
        \draw (180:1) node[draw] (3) { };
        \draw (240:1) node[draw] (4) { };
        \draw (300:1) node[draw] (5) { };
        \draw  (0) edge (1);
        \draw  (0) edge (2);
        \draw  (0) edge (3);
        \draw  (0) edge (4);
        \draw  (1) edge (2);
        \draw  (1) edge (3);
        \draw  (1) edge (4);
        \draw  (2) edge (3);
        \draw  (2) edge (4);
        \draw  (3) edge (4);
        \draw  (4) edge (5);
    \draw (0,-1.5) node () { \text{\texttt{E{\char`\~}{\char`\{}G}} };
    \end{tikzpicture}
\\
    \begin{tikzpicture}[scale=.5,thick]
    \tikzstyle{every node}=[minimum width=0pt, inner sep=2pt, circle]
        \draw (0:1) node[draw] (0) { };
        \draw (60:1) node[draw] (1) { };
        \draw (120:1) node[draw] (2) { };
        \draw (180:1) node[draw] (3) { };
        \draw (240:1) node[draw] (4) { };
        \draw (300:1) node[draw] (5) { };
        \draw  (0) edge (1);
        \draw  (0) edge (2);
        \draw  (0) edge (4);
        \draw  (2) edge (3);
        \draw  (4) edge (5);
    \draw (0,-1.5) node () { \text{\texttt{Ep{\char`\_}G}} };
    \end{tikzpicture}
&
    \begin{tikzpicture}[scale=.5,thick]
    \tikzstyle{every node}=[minimum width=0pt, inner sep=2pt, circle]
        \draw (0:1) node[draw] (0) { };
        \draw (60:1) node[draw] (1) { };
        \draw (120:1) node[draw] (2) { };
        \draw (180:1) node[draw] (3) { };
        \draw (240:1) node[draw] (4) { };
        \draw (300:1) node[draw] (5) { };
        \draw  (0) edge (1);
        \draw  (0) edge (2);
        \draw  (0) edge (4);
        \draw  (2) edge (3);
        \draw  (2) edge (4);
        \draw  (4) edge (5);
    \draw (0,-1.5) node () { \text{\texttt{EpgG}} };
    \end{tikzpicture}
&
    \begin{tikzpicture}[scale=.5,thick]
    \tikzstyle{every node}=[minimum width=0pt, inner sep=2pt, circle]
        \draw (0:1) node[draw] (1) { };
        \draw (60:1) node[draw] (0) { };
        \draw (120:1) node[draw] (2) { };
        \draw (180:1) node[draw] (3) { };
        \draw (240:1) node[draw] (5) { };
        \draw (300:1) node[draw] (4) { };
        \draw  (0) edge (1);
        \draw  (0) edge (2);
        \draw  (1) edge (4);
        \draw  (2) edge (3);
        \draw  (4) edge (5);
    \draw (0,-1.5) node () { \text{\texttt{EpOG}} };
    \end{tikzpicture}
&
    \begin{tikzpicture}[scale=.5,thick]
    \tikzstyle{every node}=[minimum width=0pt, inner sep=2pt, circle]
        \draw (0:1) node[draw] (1) { };
        \draw (60:1) node[draw] (0) { };
        \draw (120:1) node[draw] (2) { };
        \draw (180:1) node[draw] (3) { };
        \draw (240:1) node[draw] (5) { };
        \draw (300:1) node[draw] (4) { };
        \draw  (0) edge (1);
        \draw  (0) edge (2);
        \draw  (1) edge (2);
        \draw  (1) edge (4);
        \draw  (2) edge (3);
        \draw  (4) edge (5);
    \draw (0,-1.5) node () { \text{\texttt{ExOG}} };
    \end{tikzpicture}
&
    \begin{tikzpicture}[scale=.5,thick]
    \tikzstyle{every node}=[minimum width=0pt, inner sep=2pt, circle]
        \draw (0:1) node[draw] (0) { };
        \draw (60:1) node[draw] (1) { };
        \draw (120:1) node[draw] (2) { };
        \draw (180:1) node[draw] (3) { };
        \draw (240:1) node[draw] (5) { };
        \draw (300:1) node[draw] (4) { };
        \draw  (0) edge (1);
        \draw  (0) edge (2);
        \draw  (0) edge (4);
        \draw  (1) edge (2);
        \draw  (1) edge (4);
        \draw  (2) edge (3);
        \draw  (4) edge (5);
    \draw (0,-1.5) node () { \text{\texttt{ExoG}} };
    \end{tikzpicture}
&
    \begin{tikzpicture}[scale=.5,thick]
    \tikzstyle{every node}=[minimum width=0pt, inner sep=2pt, circle]
        \draw (0:1) node[draw] (0) { };
        \draw (60:1) node[draw] (1) { };
        \draw (120:1) node[draw] (2) { };
        \draw (180:1) node[draw] (3) { };
        \draw (240:1) node[draw] (5) { };
        \draw (300:1) node[draw] (4) { };
        \draw  (0) edge (1);
        \draw  (0) edge (2);
        \draw  (0) edge (4);
        \draw  (1) edge (2);
        \draw  (1) edge (4);
        \draw  (2) edge (3);
        \draw  (2) edge (4);
        \draw  (4) edge (5);
    \draw (0,-1.5) node () { \text{\texttt{ExwG}} };
    \end{tikzpicture}
&
    \begin{tikzpicture}[scale=.5,thick]
    \tikzstyle{every node}=[minimum width=0pt, inner sep=2pt, circle]
        \draw (0:1) node[draw] (1) { };
        \draw (60:1) node[draw] (0) { };
        \draw (120:1) node[draw] (2) { };
        \draw (180:1) node[draw] (3) { };
        \draw (240:1) node[draw] (5) { };
        \draw (300:1) node[draw] (4) { };
        \draw  (0) edge (1);
        \draw  (0) edge (2);
        \draw  (1) edge (4);
        \draw  (2) edge (3);
        \draw  (2) edge (4);
        \draw  (4) edge (5);
    \draw (0,-1.5) node () { \text{\texttt{EpWG}} };
    \end{tikzpicture}
\\
    \begin{tikzpicture}[scale=.5,thick]
    \tikzstyle{every node}=[minimum width=0pt, inner sep=2pt, circle]
        \draw (0:1) node[draw] (0) { };
        \draw (60:1) node[draw] (1) { };
        \draw (120:1) node[draw] (2) { };
        \draw (180:1) node[draw] (3) { };
        \draw (240:1) node[draw] (4) { };
        \draw (300:1) node[draw] (5) { };
        \draw  (0) edge (1);
        \draw  (0) edge (2);
        \draw  (1) edge (2);
        \draw  (1) edge (4);
        \draw  (2) edge (3);
        \draw  (2) edge (4);
        \draw  (4) edge (5);
    \draw (0,-1.5) node () { \text{\texttt{ExWG}} };
    \end{tikzpicture}
&
    \begin{tikzpicture}[scale=.5,thick]
    \tikzstyle{every node}=[minimum width=0pt, inner sep=2pt, circle]
        \draw (0:1) node[draw] (1) { };
        \draw (60:1) node[draw] (0) { };
        \draw (120:1) node[draw] (2) { };
        \draw (180:1) node[draw] (3) { };
        \draw (240:1) node[draw] (4) { };
        \draw (300:1) node[draw] (5) { };
        \draw  (0) edge (1);
        \draw  (0) edge (2);
        \draw  (1) edge (4);
        \draw  (2) edge (3);
        \draw  (3) edge (4);
        \draw  (4) edge (5);
    \draw (0,-1.5) node () { \text{\texttt{EpSG}} };
    \end{tikzpicture}
&
    \begin{tikzpicture}[scale=.5,thick]
    \tikzstyle{every node}=[minimum width=0pt, inner sep=2pt, circle]
        \draw (0:1) node[draw] (1) { };
        \draw (60:1) node[draw] (0) { };
        \draw (120:1) node[draw] (2) { };
        \draw (180:1) node[draw] (3) { };
        \draw (240:1) node[draw] (4) { };
        \draw (300:1) node[draw] (5) { };
        \draw  (0) edge (1);
        \draw  (0) edge (2);
        \draw  (0) edge (4);
        \draw  (1) edge (4);
        \draw  (2) edge (3);
        \draw  (3) edge (4);
        \draw  (4) edge (5);
    \draw (0,-1.5) node () { \text{\texttt{EpsG}} };
    \end{tikzpicture}
&
    \begin{tikzpicture}[scale=.5,thick]
    \tikzstyle{every node}=[minimum width=0pt, inner sep=2pt, circle]
        \draw (0:1) node[draw] (1) { };
        \draw (60:1) node[draw] (0) { };
        \draw (120:1) node[draw] (2) { };
        \draw (180:1) node[draw] (3) { };
        \draw (240:1) node[draw] (4) { };
        \draw (300:1) node[draw] (5) { };
        \draw  (0) edge (1);
        \draw  (0) edge (2);
        \draw  (0) edge (4);
        \draw  (1) edge (4);
        \draw  (2) edge (3);
        \draw  (2) edge (4);
        \draw  (3) edge (4);
        \draw  (4) edge (5);
    \draw (0,-1.5) node () { \text{\texttt{Ep{\char`\{}G}} };
    \end{tikzpicture}
&
    \begin{tikzpicture}[scale=.5,thick]
    \tikzstyle{every node}=[minimum width=0pt, inner sep=2pt, circle]
        \draw (0:1) node[draw] (1) { };
        \draw (60:1) node[draw] (0) { };
        \draw (120:1) node[draw] (2) { };
        \draw (180:1) node[draw] (3) { };
        \draw (240:1) node[draw] (5) { };
        \draw (300:1) node[draw] (4) { };
        \draw  (0) edge (1);
        \draw  (0) edge (2);
        \draw  (0) edge (3);
        \draw  (1) edge (2);
        \draw  (1) edge (4);
        \draw  (2) edge (3);
        \draw  (3) edge (5);
        \draw  (4) edge (5);
    \draw (0,-1.5) node () { \text{\texttt{E|OW}} };
    \end{tikzpicture}
&
    \begin{tikzpicture}[scale=.5,thick]
    \tikzstyle{every node}=[minimum width=0pt, inner sep=2pt, circle]
        \draw (0:1) node[draw] (0) { };
        \draw (60:1) node[draw] (1) { };
        \draw (120:1) node[draw] (2) { };
        \draw (180:1) node[draw] (3) { };
        \draw (240:1) node[draw] (5) { };
        \draw (300:1) node[draw] (4) { };
        \draw  (0) edge (1);
        \draw  (0) edge (2);
        \draw  (0) edge (3);
        \draw  (0) edge (4);
        \draw  (1) edge (2);
        \draw  (1) edge (3);
        \draw  (1) edge (4);
        \draw  (2) edge (3);
        \draw  (3) edge (5);
        \draw  (4) edge (5);
    \draw (0,-1.5) node () { \text{\texttt{E{\char`\~}oW}} };
    \end{tikzpicture}
&
    \begin{tikzpicture}[scale=.5,thick]
    \tikzstyle{every node}=[minimum width=0pt, inner sep=2pt, circle]
        \draw (0:1) node[draw] (0) { };
        \draw (60:1) node[draw] (1) { };
        \draw (120:1) node[draw] (2) { };
        \draw (180:1) node[draw] (3) { };
        \draw (240:1) node[draw] (5) { };
        \draw (300:1) node[draw] (4) { };
        \draw  (0) edge (1);
        \draw  (0) edge (2);
        \draw  (0) edge (3);
        \draw  (0) edge (4);
        \draw  (1) edge (2);
        \draw  (1) edge (3);
        \draw  (1) edge (4);
        \draw  (2) edge (3);
        \draw  (3) edge (4);
        \draw  (3) edge (5);
        \draw  (4) edge (5);
    \draw (0,-1.5) node () { \text{\texttt{E{\char`\~}sW}} };
    \end{tikzpicture}
\\
    \begin{tikzpicture}[scale=.5,thick]
    \tikzstyle{every node}=[minimum width=0pt, inner sep=2pt, circle]
        \draw (0:1) node[draw] (0) { };
        \draw (60:1) node[draw] (1) { };
        \draw (120:1) node[draw] (2) { };
        \draw (180:1) node[draw] (3) { };
        \draw (240:1) node[draw] (5) { };
        \draw (300:1) node[draw] (4) { };
        \draw  (0) edge (1);
        \draw  (0) edge (2);
        \draw  (0) edge (3);
        \draw  (0) edge (4);
        \draw  (0) edge (5);
        \draw  (1) edge (2);
        \draw  (1) edge (4);
        \draw  (2) edge (3);
        \draw  (3) edge (5);
        \draw  (4) edge (5);
    \draw (0,-1.5) node () { \text{\texttt{E|qW}} };
    \end{tikzpicture}
&
    \begin{tikzpicture}[scale=.5,thick]
    \tikzstyle{every node}=[minimum width=0pt, inner sep=2pt, circle]
        \draw (0:1) node[draw] (1) { };
        \draw (60:1) node[draw] (0) { };
        \draw (120:1) node[draw] (2) { };
        \draw (180:1) node[draw] (3) { };
        \draw (240:1) node[draw] (5) { };
        \draw (300:1) node[draw] (4) { };
        \draw  (0) edge (1);
        \draw  (0) edge (2);
        \draw  (0) edge (3);
        \draw  (1) edge (2);
        \draw  (1) edge (4);
        \draw  (2) edge (3);
        \draw  (3) edge (4);
        \draw  (3) edge (5);
        \draw  (4) edge (5);
    \draw (0,-1.5) node () { \text{\texttt{E|SW}} };
    \end{tikzpicture}
&
    \begin{tikzpicture}[scale=.5,thick]
    \tikzstyle{every node}=[minimum width=0pt, inner sep=2pt, circle]
        \draw (0:1) node[draw] (1) { };
        \draw (60:1) node[draw] (0) { };
        \draw (120:1) node[draw] (2) { };
        \draw (180:1) node[draw] (3) { };
        \draw (240:1) node[draw] (4) { };
        \draw (300:1) node[draw] (5) { };
        \draw  (0) edge (1);
        \draw  (0) edge (2);
        \draw  (0) edge (3);
        \draw  (1) edge (2);
        \draw  (1) edge (3);
        \draw  (1) edge (4);
        \draw  (1) edge (5);
        \draw  (2) edge (3);
        \draw  (3) edge (4);
        \draw  (3) edge (5);
        \draw  (4) edge (5);
    \draw (0,-1.5) node () { \text{\texttt{E{\char`\~}TW}} };
    \end{tikzpicture}
&
    \begin{tikzpicture}[scale=.5,thick]
    \tikzstyle{every node}=[minimum width=0pt, inner sep=2pt, circle]
        \draw (0:1) node[draw] (1) { };
        \draw (60:1) node[draw] (0) { };
        \draw (120:1) node[draw] (2) { };
        \draw (180:1) node[draw] (3) { };
        \draw (240:1) node[draw] (5) { };
        \draw (300:1) node[draw] (4) { };
        \draw  (0) edge (1);
        \draw  (0) edge (2);
        \draw  (1) edge (2);
        \draw  (1) edge (3);
        \draw  (1) edge (4);
        \draw  (2) edge (3);
        \draw  (3) edge (4);
        \draw  (3) edge (5);
        \draw  (4) edge (5);
    \draw (0,-1.5) node () { \text{\texttt{EzSW}} };
    \end{tikzpicture}
&
    \begin{tikzpicture}[scale=.5,thick]
    \tikzstyle{every node}=[minimum width=0pt, inner sep=2pt, circle]
        \draw (0:1) node[draw] (1) { };
        \draw (60:1) node[draw] (0) { };
        \draw (120:1) node[draw] (2) { };
        \draw (180:1) node[draw] (3) { };
        \draw (240:1) node[draw] (5) { };
        \draw (300:1) node[draw] (4) { };
        \draw  (0) edge (1);
        \draw  (0) edge (2);
        \draw  (1) edge (3);
        \draw  (1) edge (4);
        \draw  (2) edge (3);
        \draw  (3) edge (5);
        \draw  (4) edge (5);
    \draw (0,-1.5) node () { \text{\texttt{ErOW}} };
    \end{tikzpicture}
&
    \begin{tikzpicture}[scale=.5,thick]
    \tikzstyle{every node}=[minimum width=0pt, inner sep=2pt, circle]
        \draw (0:1) node[draw] (1) { };
        \draw (60:1) node[draw] (0) { };
        \draw (120:1) node[draw] (2) { };
        \draw (180:1) node[draw] (3) { };
        \draw (240:1) node[draw] (5) { };
        \draw (300:1) node[draw] (4) { };
        \draw  (0) edge (1);
        \draw  (0) edge (2);
        \draw  (1) edge (2);
        \draw  (1) edge (3);
        \draw  (1) edge (4);
        \draw  (2) edge (3);
        \draw  (3) edge (5);
        \draw  (4) edge (5);
    \draw (0,-1.5) node () { \text{\texttt{EzOW}} };
    \end{tikzpicture}
&
    \begin{tikzpicture}[scale=.5,thick]
    \tikzstyle{every node}=[minimum width=0pt, inner sep=2pt, circle]
        \draw (0:1) node[draw] (1) { };
        \draw (60:1) node[draw] (0) { };
        \draw (120:1) node[draw] (2) { };
        \draw (180:1) node[draw] (3) { };
        \draw (240:1) node[draw] (5) { };
        \draw (300:1) node[draw] (4) { };
        \draw  (0) edge (1);
        \draw  (0) edge (2);
        \draw  (1) edge (2);
        \draw  (1) edge (3);
        \draw  (1) edge (4);
        \draw  (1) edge (5);
        \draw  (2) edge (3);
        \draw  (3) edge (5);
        \draw  (4) edge (5);
    \draw (0,-1.5) node () { \text{\texttt{EzPW}} };
    \end{tikzpicture}
\\
    \begin{tikzpicture}[scale=.5,thick]
    \tikzstyle{every node}=[minimum width=0pt, inner sep=2pt, circle]
        \draw (0:1) node[draw] (1) { };
        \draw (60:1) node[draw] (0) { };
        \draw (120:1) node[draw] (2) { };
        \draw (180:1) node[draw] (3) { };
        \draw (240:1) node[draw] (5) { };
        \draw (300:1) node[draw] (4) { };
        \draw  (0) edge (1);
        \draw  (0) edge (2);
        \draw  (0) edge (4);
        \draw  (1) edge (3);
        \draw  (1) edge (4);
        \draw  (2) edge (3);
        \draw  (3) edge (5);
        \draw  (4) edge (5);
    \draw (0,-1.5) node () { \text{\texttt{EroW}} };
    \end{tikzpicture}
&
    \begin{tikzpicture}[scale=.5,thick]
    \tikzstyle{every node}=[minimum width=0pt, inner sep=2pt, circle]
        \draw (0:1) node[draw] (1) { };
        \draw (60:1) node[draw] (0) { };
        \draw (120:1) node[draw] (2) { };
        \draw (180:1) node[draw] (3) { };
        \draw (240:1) node[draw] (5) { };
        \draw (300:1) node[draw] (4) { };
        \draw  (0) edge (1);
        \draw  (0) edge (2);
        \draw  (0) edge (3);
        \draw  (0) edge (4);
        \draw  (1) edge (3);
        \draw  (1) edge (4);
        \draw  (2) edge (3);
        \draw  (3) edge (5);
        \draw  (4) edge (5);
    \draw (0,-1.5) node () { \text{\texttt{EvoW}} };
    \end{tikzpicture}
&
    \begin{tikzpicture}[scale=.5,thick]
    \tikzstyle{every node}=[minimum width=0pt, inner sep=2pt, circle]
        \draw (0:1) node[draw] (1) { };
        \draw (60:1) node[draw] (0) { };
        \draw (120:1) node[draw] (2) { };
        \draw (180:1) node[draw] (3) { };
        \draw (240:1) node[draw] (5) { };
        \draw (300:1) node[draw] (4) { };
        \draw  (0) edge (1);
        \draw  (0) edge (2);
        \draw  (0) edge (3);
        \draw  (0) edge (4);
        \draw  (1) edge (3);
        \draw  (1) edge (4);
        \draw  (2) edge (3);
        \draw  (3) edge (4);
        \draw  (3) edge (5);
        \draw  (4) edge (5);
    \draw (0,-1.5) node () { \text{\texttt{EvsW}} };
    \end{tikzpicture}
&
    \begin{tikzpicture}[scale=.5,thick]
    \tikzstyle{every node}=[minimum width=0pt, inner sep=2pt, circle]
        \draw (0:1) node[draw] (0) { };
        \draw (60:1) node[draw] (1) { };
        \draw (120:1) node[draw] (2) { };
        \draw (180:1) node[draw] (3) { };
        \draw (240:1) node[draw] (5) { };
        \draw (300:1) node[draw] (4) { };
        \draw  (0) edge (1);
        \draw  (0) edge (2);
        \draw  (0) edge (4);
        \draw  (1) edge (2);
        \draw  (1) edge (3);
        \draw  (1) edge (4);
        \draw  (2) edge (3);
        \draw  (2) edge (5);
        \draw  (3) edge (5);
        \draw  (4) edge (5);
    \draw (0,-1.5) node () { \text{\texttt{Ezow}} };
    \end{tikzpicture}
&
    \begin{tikzpicture}[scale=.5,thick]
    \tikzstyle{every node}=[minimum width=0pt, inner sep=2pt, circle]
        \draw (0:1) node[draw] (0) { };
        \draw (60:1) node[draw] (1) { };
        \draw (120:1) node[draw] (2) { };
        \draw (180:1) node[draw] (3) { };
        \draw (240:1) node[draw] (5) { };
        \draw (300:1) node[draw] (4) { };
        \draw  (0) edge (1);
        \draw  (0) edge (2);
        \draw  (0) edge (4);
        \draw  (1) edge (2);
        \draw  (1) edge (3);
        \draw  (1) edge (4);
        \draw  (2) edge (3);
        \draw  (2) edge (4);
        \draw  (2) edge (5);
        \draw  (3) edge (4);
        \draw  (3) edge (5);
        \draw  (4) edge (5);
    \draw (0,-1.5) node () { \text{\texttt{Ez{\char`\{}w}} };
    \end{tikzpicture}
&
    \begin{tikzpicture}[scale=.5,thick]
    \tikzstyle{every node}=[minimum width=0pt, inner sep=2pt, circle]
        \draw (0:1) node[draw] (0) { };
        \draw (60:1) node[draw] (1) { };
        \draw (120:1) node[draw] (2) { };
        \draw (180:1) node[draw] (3) { };
        \draw (240:1) node[draw] (4) { };
        \draw (300:1) node[draw] (5) { };
        \draw  (0) edge (1);
        \draw  (0) edge (2);
        \draw  (0) edge (3);
        \draw  (0) edge (4);
        \draw  (0) edge (5);
        \draw  (1) edge (2);
        \draw  (1) edge (3);
        \draw  (1) edge (4);
        \draw  (1) edge (5);
        \draw  (2) edge (3);
        \draw  (2) edge (4);
        \draw  (2) edge (5);
        \draw  (3) edge (4);
        \draw  (3) edge (5);
        \draw  (4) edge (5);
    \draw (0,-1.5) node () { \text{\texttt{E{\char`\~}{\char`\~}w}} };
    \end{tikzpicture}
&
    \begin{tikzpicture}[scale=.5,thick]
    \tikzstyle{every node}=[minimum width=0pt, inner sep=2pt, circle]
        \draw (0:1) node[draw] (0) { };
        \draw (60:1) node[draw] (3) { };
        \draw (120:1) node[draw] (2) { };
        \draw (180:1) node[draw] (1) { };
        \draw (240:1) node[draw] (4) { };
        \draw (300:1) node[draw] (5) { };
        \draw  (0) edge (1);
        \draw  (0) edge (2);
        \draw  (0) edge (3);
        \draw  (0) edge (5);
        \draw  (1) edge (2);
        \draw  (1) edge (3);
        \draw  (1) edge (4);
        \draw  (2) edge (3);
        \draw  (2) edge (4);
        \draw  (3) edge (5);
        \draw  (4) edge (5);
    \draw (0,-1.5) node () { \text{\texttt{E{\char`\~}YW}} };
    \end{tikzpicture}
\\
&
&
&
    \begin{tikzpicture}[scale=.5,thick]
    \tikzstyle{every node}=[minimum width=0pt, inner sep=2pt, circle]
        \draw (0:1) node[draw] (0) { };
        \draw (60:1) node[draw] (1) { };
        \draw (120:1) node[draw] (2) { };
        \draw (180:1) node[draw] (3) { };
        \draw (240:1) node[draw] (4) { };
        \draw (300:1) node[draw] (5) { };
        \draw  (0) edge (1);
        \draw  (0) edge (2);
        \draw  (0) edge (3);
        \draw  (0) edge (4);
        \draw  (0) edge (5);
        \draw  (1) edge (2);
        \draw  (1) edge (3);
        \draw  (1) edge (4);
        \draw  (2) edge (3);
        \draw  (2) edge (4);
        \draw  (3) edge (4);
        \draw  (3) edge (5);
        \draw  (4) edge (5);
    \draw (0,-1.5) node () { \text{\texttt{E{\char`\~}{\char`\}}W}} };
    \end{tikzpicture}
    &
    \\
\end{tabular}
\end{center}
\caption{Some forbidden for $\mathcal{S}_{\leq 4}$.}
\label{fig:forb4}
\end{figure}


The problem of characterizing graphs in $\mathcal{S}_{\leq4}$ is not straightforward.
However, it is interesting that if $G\in\mathcal{S}_{\leq4}$, then any graph obtained by replacing its vertices by stable sets will be also in $\mathcal{S}_{\leq4}$.
This will be shown next.

\begin{lemma}
For any $\mathbf{d}\in \mathbb{N}^{V}$, the non-zero invariant factors of $A(G)$ are equal to the non-zero invariant factors of $A(G^{\mathbf{d}})$.
\end{lemma}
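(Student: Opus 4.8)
The plan is to realise $A(G^{\mathbf{d}})$ as a symmetric inflation of $A(G)$ and then strip off the redundancy it carries by a single unimodular congruence, reducing the matrix to $A(G)$ bordered by zeros.

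First I would order the vertices of $G^{\mathbf{d}}$ so that the stable set $V_u$ replacing each $u\in V$ occupies a contiguous block, and set $A=A(G)=(a_{uv})$ and $N=\sum_{u\in V}\mathbf{d}_u$. With this ordering the adjacency matrix of $G^{\mathbf{d}}$ factors as
\[
A(G^{\mathbf{d}})=S\,A\,S\trans,
\]
where $S$ is the $N\times n$ $0/1$ membership matrix whose $(k,u)$ entry is $1$ exactly when vertex $k$ lies in $V_u$. Indeed, for $k\in V_u$ and $l\in V_v$ the $(k,l)$ entry of $SAS\trans$ equals $a_{uv}$, the correct adjacency. Here the hypothesis $\mathbf{d}\in\mathbb{N}^{V}$ is \emph{essential}: since every blow-up is a stable set, all diagonal blocks vanish, so the formula is valid on the diagonal as well and the rows (resp.\ columns) indexed by a common block $V_u$ are literally identical.

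Next I would bring $S$ into its Smith form using row operations alone. After permuting one representative of each block to the top, $S$ takes the shape $\left(\begin{smallmatrix} I_n\\ M\end{smallmatrix}\right)$, where each row of $M$ is a single standard basis row sitting in the column of its block's representative; subtracting, for each non-representative vertex $k\in V_u$, the representative row of block $u$ from row $k$ clears $M$ entirely. This yields a matrix $P\in GL_N(\mathbb{Z})$ with $PS=\left(\begin{smallmatrix} I_n\\ 0\end{smallmatrix}\right)$. Conjugating the factorization by $P$ then gives
\[
P\,A(G^{\mathbf{d}})\,P\trans=(PS)\,A\,(PS)\trans=\begin{pmatrix} I_n\\ 0\end{pmatrix}A\begin{pmatrix} I_n & 0\end{pmatrix}=A\oplus 0_{N-n}.
\]

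Since $P\in GL_N(\mathbb{Z})$ and hence $P\trans\in GL_N(\mathbb{Z})$, the matrices $A(G^{\mathbf{d}})$ and $A\oplus 0_{N-n}$ are equivalent over $\mathbb{Z}$, so their Smith normal forms coincide up to additional trailing zeros. Therefore the nonzero invariant factors of $A(G^{\mathbf{d}})$ are precisely those of $A(G)$, which is the assertion. I do not anticipate a genuine obstacle; the only point demanding care is confirming that row reduction alone (no column operations) already clears $S$ to $\left(\begin{smallmatrix} I_n\\ 0\end{smallmatrix}\right)$, which works because the columns of $S$ have pairwise disjoint supports and each row contains a single $1$ lying in the column of its representative.
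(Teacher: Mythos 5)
Your proposal is correct and is in essence the paper's own argument: both proofs clear the rows and columns indexed by duplicated vertices using unimodular operations, reducing $A(G^{\mathbf{d}})$ to $A(G)\oplus 0_{N-n}$ and hence preserving the nonzero invariant factors. The paper phrases this as duplicating one vertex at a time (showing $A(G^{u})$ is equivalent to $A(G)\oplus 0$ and iterating), whereas you perform all duplications simultaneously via the factorization $A(G^{\mathbf{d}})=S\,A(G)\,S^{\top}$ and a single congruence $P\,A(G^{\mathbf{d}})\,P^{\top}=A(G)\oplus 0_{N-n}$ with $P\in GL_N(\mathbb{Z})$ --- an organizational, not substantive, difference.
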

\begin{proof}
Given $u\in V$.
Let $G^u$ denote the graph obtained after duplicating vertex $u$.
Since the adjacency matrix of $G^u$ is equivalent to
$
\begin{bmatrix}
A(G) & {\bf 0}^T\\
{\bf 0} & 0 \\
\end{bmatrix}
$,
then the non-zero invariant factors of $A(G)$ and $A(G^u)$ are the same.
From which the result follows.
\end{proof}

Previous lemma help us in computing the Smith normal form of the adjacency matrix of graphs with duplicated vertices.
In particular, it bound the number of non-zero invariant factors.

\begin{corollary}
Let $G$ be a graph in $\mathcal{S}_{\leq k}$, then $G^{\mathbf{d}} \in \mathcal{S}_{\leq k}$ for any $\mathbf{d}\in \mathbb{N}^{V}$.
\end{corollary}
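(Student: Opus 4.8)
The plan is to obtain this Corollary as an immediate consequence of the Lemma that precedes it, and to spell out the two small bookkeeping points that connect the phrase ``non-zero invariant factors'' to membership in $\mathcal{S}_{\leq k}$. First I would recall what the family requires: a simple \emph{connected} graph $H$ lies in $\mathcal{S}_{\leq k}$ exactly when $\phi(A(H)) \leq k$, where $\phi(A(H))$ counts the invariant factors of $A(H)$ equal to $1$. So the whole task reduces to showing $\phi(A(G^{\mathbf{d}})) \leq k$ and checking that $G^{\mathbf{d}}$ is connected.

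For connectivity, since $\mathbf{d} \in \mathbb{N}^V$ has all entries positive, each set $V_u$ in the blow-up $G^{\mathbf{d}}$ is a nonempty stable set; choosing one representative from each $V_u$ lifts any walk in $G$ to a walk in $G^{\mathbf{d}}$, so $G$ connected forces $G^{\mathbf{d}}$ connected. Hence $G^{\mathbf{d}}$ is an admissible candidate for the family, and it only remains to bound $\phi$.

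Next I would invoke the Lemma directly: for any $\mathbf{d} \in \mathbb{N}^V$ the non-zero invariant factors of $A(G)$ and of $A(G^{\mathbf{d}})$ coincide. The point to observe is that $\phi$ depends only on these non-zero invariant factors, because an invariant factor equal to $1$ is in particular non-zero; thus the multiset of non-zero invariant factors already records exactly how many of them equal $1$. It follows that $\phi(A(G^{\mathbf{d}})) = \phi(A(G))$, and therefore $G \in \mathcal{S}_{\leq k}$, i.e.\ $\phi(A(G)) \leq k$, yields $\phi(A(G^{\mathbf{d}})) \leq k$, so $G^{\mathbf{d}} \in \mathcal{S}_{\leq k}$.

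I expect essentially no obstacle here, since all the real content is carried by the Lemma, whose proof realizes $A(G^{\mathbf{d}})$ as equivalent to $A(G)$ bordered by one zero row and column per duplicated vertex, so that the Smith normal form of $A(G^{\mathbf{d}})$ is that of $A(G)$ padded with extra zeros. The only thing I would be careful about is not conflating the \emph{total} number of invariant factors (which grows with $|V(G^{\mathbf{d}})|$) with the number equal to $1$ (which is what $\phi$, and hence the definition of $\mathcal{S}_{\leq k}$, tracks): the padding introduces only zeros and never a new $1$, so the count $\phi$ is genuinely unchanged and the bound $k$ is preserved.
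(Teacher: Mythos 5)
Your proof is correct and follows essentially the same route as the paper: the paper states this corollary as an immediate consequence of the preceding lemma (non-zero invariant factors are preserved under blow-up by stable sets), which is exactly the argument you give. Your additional remarks on connectivity and on not conflating the total number of invariant factors with the number equal to $1$ are sound bookkeeping that the paper leaves implicit.
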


\section*{Acknowledgement}
This research was initiated during GRWC 2018 (Graduate Research Workshop in Combinatorics) at Iowa State University.
Carlos A. Alfaro was supported by SNI and Ralihe R. Villagr\'an was supported by CONACyT.

\end{document}